\documentclass[12pt]{amsart}
\usepackage{graphicx}
\usepackage{amsmath}
\usepackage{amsthm}
\usepackage{amssymb}
\usepackage{amsfonts}
\usepackage{mathtools}
\usepackage{xcolor}
\usepackage{hyperref}
\usepackage{mathrsfs}
\usepackage{tikz}
\usepackage{xstring}
\usetikzlibrary{calc}
\hypersetup{
	colorlinks,
	citecolor=black,
	filecolor=black,
	linkcolor=black,
	urlcolor=black
}
\usepackage[capitalize, nameinlink, noabbrev, nosort]{cleveref}
\usepackage{float}
\usepackage{fullpage}
\usepackage{bbm}
\usepackage{bm}
\usepackage{ytableau}
\usepackage{diagbox}
\ytableausetup{boxsize=2em}

\newcommand{\rkey}[1]{{\normalfont\sffamily\bfseries\footnotesize[#1]}}      
\newcommand{\cjkey}[1]{{\normalfont\sffamily\bfseries\footnotesize\color{red}[#1\textsuperscript{\dag}]}} 
\newcommand{\cj}[1]{\textcolor{red}{#1}}                                      

\theoremstyle{plain}
\newtheorem{theorem}{Theorem}[section]
\newtheorem{proposition}[theorem]{Proposition}

\newtheorem{corollary}[theorem]{Corollary}
\newtheorem{lemma}[theorem]{Lemma}

\theoremstyle{definition}
\newtheorem{definition}[theorem]{Definition}
\newtheorem{example}[theorem]{Example}
\newtheorem{remark}[theorem]{Remark}

\DeclareMathOperator{\SYBT}{SYBT}
\DeclareMathOperator{\shape}{shape}
\DeclareMathOperator{\Des}{Des}
\DeclareMathOperator{\fmaj}{fmaj}
\DeclareMathOperator{\Res}{Res}
\DeclareMathOperator{\Frob}{Frob}
\DeclareMathOperator{\Hilb}{Hilb}
\DeclareMathOperator{\Ind}{Ind}
\DeclareMathOperator{\CT}{CT}
\DeclareMathOperator{\alt}{alt}
\DeclareMathOperator{\GL}{GL}
\DeclareMathOperator{\Sym}{Sym}
\DeclareMathOperator{\sgn}{sgn}
\DeclareMathOperator{\reg}{reg}
\DeclareMathOperator{\ch}{char}
\DeclareMathOperator{\ad}{ad}

\newcommand{\Z}{\mathbb{Z}}
\newcommand{\C}{\mathbb{C}}
\newcommand{\x}{\mathbf{x}}
\newcommand{\y}{\mathbf{y}}
\newcommand{\z}{\mathbf{z}}

\newcommand{\motzkin}[2]{
    \begin{tikzpicture}[scale=0.6, baseline=(current bounding box.center)]
        \coordinate (current) at (0,0);
        \ifcsname c@step\endcsname
        \else
            \newcounter{step}
        \fi
        \setcounter{step}{1}
        \foreach \step in {#1} {
            \ifnum\pdfstrcmp{\step}{up}=0
                \draw[thick] (current) -- ++(1,1);
                \coordinate (current) at ($(current) + (1,1)$);
            \fi
            \ifnum\pdfstrcmp{\step}{flat-theta}=0
                \draw[thick] (current) -- ++(1,0) node[midway, above, font=\footnotesize] {$\theta_{\arabic{step}}$};
                \coordinate (current) at ($(current) + (1,0)$);
            \fi
            \ifnum\pdfstrcmp{\step}{flat-xi}=0
                \draw[thick] (current) -- ++(1,0) node[midway, above, font=\footnotesize] {$\xi_{\arabic{step}}$};
                \coordinate (current) at ($(current) + (1,0)$);
            \fi
            \ifnum\pdfstrcmp{\step}{flat}=0
                \draw[thick] (current) -- ++(1,0);
                \coordinate (current) at ($(current) + (1,0)$);
            \fi
            \ifnum\pdfstrcmp{\step}{down}=0
                \draw[thick] (current) -- ++(1,-1) node[midway, above, font=\footnotesize, xshift=2pt, yshift=1pt] {$\theta_{\arabic{step}} \xi_{\arabic{step}}$};
                \coordinate (current) at ($(current) + (1,-1)$);
            \fi
            \stepcounter{step}
        }
        \node[anchor=north] at (0.5*\arabic{step}, -0.5) {\normalsize #2};
    \end{tikzpicture}
}

\title{Type $B$ fermionic coinvariant rings}

\author[Y. Jiang]{Yuhan Jiang}
\address{Department of Mathematics\\
         University of California, Berkeley, CA, USA}
\email{jyh@math.berkeley.edu}

\author[J. Lentfer]{John Lentfer}
\address{Department of Mathematics\\
         University of California, San Diego, CA, USA}
\email{jlentfer@ucsd.edu}

\begin{document}

\begin{abstract}
    Let $\mathfrak{B}_n$ denote the hyperoctahedral group.
    The type $B$ coinvariant rings $R_{\mathfrak{B}_n}^{(k,j)}$ are quotients of the ring of polynomials in $k$ sets of $n$ commuting variables and $j$ sets of $n$ anticommuting variables by the ideal generated by the diagonal $\mathfrak{B}_n$-invariants without constant term.
    Building upon the work of Kim--Rhoades (2022), we give an explicit formula for the bigraded Frobenius series of $R_{\mathfrak{B}_n}^{(0,2)}$: the bigraded multiplicity of each irreducible $\mathfrak{B}_n$-character is a single Schur polynomial, so $R_{\mathfrak{B}_n}^{(0,2)}$ is multiplicity-free as a $\operatorname{GL}_2 \times \mathfrak{B}_n$-module.
    We then determine that the trigraded multiplicity of the sign character of $R_{\mathfrak{B}_n}^{(0,3)}$ is given by a single Schur function.
    Finally, for all $k$ and $j$, we determine the multiplicity of the standard character in the type $A$ coinvariant ring $R_{n}^{(k,j)}$, as well as the multiplicities of the characters indexed by the bipartitions $((n-1),(1))$ and $((n-1,1),\varnothing)$ in $R_{\mathfrak{B}_n}^{(k,j)}$.
    These are the first nontrivial characters established for all $(k,j)$ in either of types $A$ or $B$.
\end{abstract}

\maketitle

\section{Introduction}\label{sec:introduction}

The classical coinvariant ring $R_n^{(1,0)}$ and the type $B$ coinvariant ring $R_{\mathfrak{B}_n}^{(1,0)}$ are important mathematical objects, appearing throughout combinatorics and algebra.
They are isomorphic to the cohomology rings of the type $A$ and type $B$ flag varieties, respectively \cite{Borel,Leray}.

The diagonal coinvariant ring $R_n^{(2,0)}$ and its type $B$ version $R_{\mathfrak{B}_n}^{(2,0)}$ were introduced by Haiman \cite{Haiman1994}, who later determined the dimension, $(n+1)^{n-1}$, and the bigraded Frobenius series of $R_n^{(2,0)}$ \cite{Haiman2002}. 
In type $B$, the exact dimension of $R_{\mathfrak{B}_n}^{(2,0)}$ is unknown. 
Gordon \cite{Gordon} established a lower bound of $(2n+1)^n$ by exhibiting a quotient module of that dimension, confirming a conjecture of Haiman \cite{Haiman1994}. Ajila--Griffeth \cite{AjilaGriffeth} improved the bound further.

Several other coinvariant ring variations have since been studied in types $A$ and $B$.
The diagonal fermionic coinvariant rings $R_n^{(0,2)}$ and $R_{\mathfrak{B}_n}^{(0,2)}$ were studied in a type-independent way by Kim--Rhoades \cite{MR4381935}.
The superspace coinvariant ring $R_n^{(1,1)}$ was studied in \cite{Angarone2025, fields2025, RhoadesWilson2023, SwansonWallach1, SwansonWallachII} and the type $B$ version $R_{\mathfrak{B}_n}^{(1,1)}$ in \cite{bhattacharya, RhoadesBhattacharya, MR4674564, SwansonWallach1, SwansonWallachII}.

For a summary of the conjectures and results on the dimension of $R_n^{(k,j)}$ and $R_{\mathfrak{B}_n}^{(k,j)}$ for small values of $k,j$, see Tables~\ref{tab:dim} and~\ref{tab:dim-B_n}; 
for the multiplicity of the sign character, see Tables~\ref{tab:sign} and~\ref{tab:sign-B_n}.

While the type $A$ coinvariant rings are typically studied first, there is a strong interest in studying the type $B$ coinvariant rings, especially to determine when the two can be unified in a type-independent way and when important differences arise.

The rings discussed so far are all special cases of the following general construction (see \cite{Bergeron2013, BergeronOPAC, Bergeron2020, Lentfer-Supersymmetry}).
Define the \emph{$(k,j)$-bosonic-fermionic coinvariant ring for a finite group $G \subset \GL_n$} by
\begin{equation}\label{eq:coinv} R_G^{(k,j)} = \C[\bm{x}^{(1)}, \ldots, \bm{x}^{(k)}, \bm{\theta}^{(1)}, \ldots, \bm{\theta}^{(j)}]/\langle \C[\bm{x}^{(1)}, \ldots, \bm{x}^{(k)}, \bm{\theta}^{(1)}, \ldots, \bm{\theta}^{(j)}]_+^{G} \rangle,\end{equation}
where $\C[\bm{x}^{(1)}, \ldots, \bm{x}^{(k)}, \bm{\theta}^{(1)}, \ldots, \bm{\theta}^{(j)}]^G_+$ denotes the $G$-invariant polynomials without constant term, under the diagonal action of $G$ on each set of variables.
The sets of bosonic variables are denoted by $\bm{x}^{(i)} = \{ x^{(i)}_1,\ldots,x^{(i)}_n \}$ and the sets of fermionic variables are denoted by $\bm{\theta}^{(i)} = \{ \theta^{(i)}_1,\ldots,\theta^{(i)}_n \}$.
Note that $\C[\bm{x}^{(1)},\ldots,\bm{x}^{(k)},
     \bm{\theta}^{(1)},\ldots,\bm{\theta}^{(j)}] \cong
   \Sym\big((\C^n)^{\oplus k}\big) \otimes
   {\textstyle\bigwedge}\big((\C^n)^{\oplus j}\big)$.
The bosonic variables $\bm{x}^{(\ell)}$ commute with all variables, while the fermionic variables $\bm{\theta}^{(\ell)}$ anticommute with one another.
In particular, any fermionic variable squared is $0$.

When $G$ is the symmetric group $\mathfrak{S}_n$, acting diagonally by permuting the indices within each set of variables, we denote this by $R_n^{(k,j)}$.
When $G$ is the hyperoctahedral group $\mathfrak{B}_n$, acting diagonally by signed permutations of the indices within each set, we denote this by $R_{\mathfrak{B}_n}^{(k,j)}$.

In this paper, we primarily study the type $B$ fermionic coinvariant rings $R_{\mathfrak{B}_n}^{(0,j)}$.
Section~\ref{sec:background} reviews background material; as a warm-up, Section~\ref{sec:01} determines the Frobenius series of $R_{\mathfrak{B}_n}^{(0,1)}$ (Proposition~\ref{prop:one-set}), which encodes its $\mathfrak{B}_n$-module structure.
Sections~\ref{sec:02}--\ref{sec:standard} contain our main contributions, summarized below. 
Appendix~\ref{sec:harmonic} develops the harmonic space $H_{\mathfrak{B}_n}^{(k,j)}$, which is isomorphic to $R_{\mathfrak{B}_n}^{(k,j)}$ as a multigraded $\mathfrak{B}_n$-module, together with the polarization operators acting on it. 
Appendix~\ref{sec:appendix} gives data for the Frobenius series and dimension of $R_{\mathfrak{B}_n}^{(0,3)}$ up through $n=4$.
Our main contributions are as follows:
\begin{itemize}
    \item Building on the work of Kim--Rhoades \cite{MR4381935}, we give an explicit formula for the bigraded multiplicity of every irreducible character in $R_{\mathfrak{B}_n}^{(0,2)}$, and hence for its bigraded Frobenius series (Theorem~\ref{thm:explicit-0-2}):
    \begin{equation}
    \Frob(R_{\mathfrak{B}_n}^{(0,2)};u,v) = \sum_{\substack{(\lambda, \mu)\vdash n,\\ \lambda = (\lambda_1,\lambda_2),\\ \mu = (2^\ell, 1^{n-\lambda_1-\lambda_2-2\ell}) }} s_{(n-\ell-\lambda_1, \ell+\lambda_2)}(u,v)s_{\lambda}(\x) s_{\mu}(\y).
    \end{equation}
    As a consequence, we obtain a multiplicity-free decomposition of $R_{\mathfrak{B}_n}^{(0,2)}$ as a $\GL_2 \times \mathfrak{B}_n$-module (Corollary~\ref{cor:multiplicity-free}):
    \begin{equation}
        R_{\mathfrak{B}_n}^{(0,2)} \cong \bigoplus_{\substack{(\lambda, \mu) \vdash n,\\
        \lambda = (\lambda_1,\lambda_2),\\
        \mu = (2^\ell, 1^{n-\lambda_1-\lambda_2 - 2\ell})}}
        \mathbb{S}^{(n-\ell-\lambda_1, \ell+\lambda_2)}(\C^2) \otimes V^{(\lambda, \mu)}.
    \end{equation}
    We also recover the modified Motzkin path formula for the Hilbert series of $R_{\mathfrak{B}_n}^{(0,2)}$ of Kim--Rhoades (Proposition~\ref{prop:KR-Hilb}) and obtain a new Hilbert series formula in terms of Kostka numbers (Corollary~\ref{cor:new-Hilbert-B02}).
    \item We determine the trigraded multiplicity of the sign character of $R_{\mathfrak{B}_n}^{(0,3)}$ (Theorem~\ref{thm:03sgn}), paralleling work in \cite{lentfer2026signcharactertriagonalfermionic} for the symmetric group:
    \begin{equation}
        \langle \Frob(R_{\mathfrak{B}_n}^{(0,3)};u,v,w),s_{\varnothing}(\x) s_{(1^n)}(\y) \rangle =s_{(n)}(u,v,w).
    \end{equation}
    \item We determine the multiplicity of the standard character $\chi^{(n-1,1)}$ in $R_n^{(k,j)}$ for all $k,j$ (Theorem~\ref{thm:standard-character}):
    \begin{equation}
        \langle \Frob(R_n^{(k,j)}; \mathbf{q};\mathbf{u}),
        s_{(n-1,1)}(\z) \rangle
        = s_{(1)}(\mathbf{q}/\mathbf{u}) + s_{(2)}(\mathbf{q}/\mathbf{u})
        + \cdots + s_{(n-1)}(\mathbf{q}/\mathbf{u}).
    \end{equation}
    \item We determine the multiplicity of the characters $\chi^{((n-1,1),\varnothing)}$ and $\chi^{((n-1),(1))}$ in $R_{\mathfrak{B}_n}^{(k,j)}$ for all $k,j$ (Theorem~\ref{thm:standard-character-B_n}):
    \begin{equation}
        \langle \Frob(R_{\mathfrak{B}_n}^{(k,j)}; \mathbf{q};\mathbf{u}),
        s_{(n-1)}(\x)s_{(1)}(\y) \rangle
        = s_{(1)}(\mathbf{q}/\mathbf{u}) + s_{(3)}(\mathbf{q}/\mathbf{u})
        + \cdots + s_{(2n-1)}(\mathbf{q}/\mathbf{u}),
    \end{equation}
    and
        \begin{equation}
        \langle \Frob(R_{\mathfrak{B}_n}^{(k,j)}; \mathbf{q};\mathbf{u}),
        s_{(n-1,1)}(\x)s_{\varnothing}(\y) \rangle
        = s_{(2)}(\mathbf{q}/\mathbf{u}) + s_{(4)}(\mathbf{q}/\mathbf{u})
        + \cdots + s_{(2n-2)}(\mathbf{q}/\mathbf{u}).
    \end{equation}
\end{itemize}


\begin{table}[b]
\centering
\begin{tabular}{|c|ccc|}
\hline
\diagbox{$k$}{$j$} & $0$ & $1$ & $2$ \\ \hline
$0$ & $1$          & $2^{n-1}$\,\rkey{a}        & $\binom{2n-1}{n}$\,\rkey{b}     \\[3pt]
$1$ & $n!$\,\rkey{c} & $\sum_{i=1}^n i! S(n,i)$\,\rkey{d}    &  \cj{$2^{n-1}n!$} \,\cjkey{e}                                      \\[3pt]
$2$ & $(n+1)^{n-1}$\,\rkey{f} & \cj{$\sum_{i=0}^{n+1} \binom{n+1}{i} \frac{i^n}{2(n+1)}$}\,\cjkey{g}   &  ---              \\[3pt]
$3$ &  \cj{$2^n(n+1)^{n-2}$}\,\cjkey{h}               & ---                  & ---                 \\ \hline
\end{tabular}
\caption{Dimension of $R_{n}^{(k,j)}$.}
\label{tab:dim}
\smallskip

\begin{minipage}{\textwidth}\footnotesize
\emph{Notes.} Entries marked with a dagger~(\dag) (in red) are conjectural; all others are established.\par\smallskip
\rkey{a}~Easy. \quad
\rkey{b}~Conjectured by \cite{Zabrocki2020}; proven by \cite{MR4381935}.\quad
\rkey{c}~Proven by \cite{Artin}.\quad
\rkey{d}~$S(n,i)$ is the Stirling number of the second kind; conjectured by \cite{MR4674564}; proven by \cite{RhoadesWilson2023}.\quad
\cjkey{e}~Conjectured by \cite{Zabrocki2020, Bergeron2020}.\quad
\rkey{f}~Conjectured by \cite{Haiman1994}; proven by \cite{Haiman2002}.\quad
\cjkey{g}~Conjectured by \cite{Zabrocki2020, Bergeron2020}.\quad
\cjkey{h}~Conjectured by \cite{Haiman1994}.
\end{minipage}
\end{table}

\begin{table}
\centering
\begin{tabular}{|c|ccc|}
\hline
\diagbox{$k$}{$j$} & $0$ & $1$ & $2$ \\ \hline
$0$ & $1$          & $2^n$\,\rkey{a}        & $\binom{2n+1}{n}$\,\rkey{b}     \\[3pt]
$1$ & $2^n n!$\,\rkey{c} & $\sum_{i=0}^n (2i)!! S^B(n,i)$\,\rkey{d}    &    \cj{$4^n n!$}\,\cjkey{e}                                     \\[3pt]
$2$ & $\geq (2n+1)^{n}$\,\rkey{f} & --- &  ---              \\[3pt]
$3$ &    ---           & ---                  & ---                 \\ \hline
\end{tabular}
\caption{Dimension of $R_{\mathfrak{B}_n}^{(k,j)}$.}
\label{tab:dim-B_n}
\smallskip

\begin{minipage}{\textwidth}\footnotesize
\emph{Notes.} Entries marked with a dagger~(\dag) (in red) are conjectural; all others are established.\par\smallskip
\rkey{a}~Easy (see Proposition~\ref{prop:01-B_n-Hilb}).\quad
\rkey{b}~Proven by \cite{MR4381935}.\quad
\rkey{c}~Proven by \cite{MR59914, Chevalley} (see Theorem~\ref{thm:Chevalley-Shephard-Todd}).\quad
\rkey{d}~$S^B(n,i)$ is the type $B$ Stirling number of the second kind; conjectured by \cite{MR4674564}; proven by \cite{bhattacharya}.\quad
\cjkey{e}~Conjectured by \cite{Lentfer-12}.\quad
\rkey{f}~$(2n+1)^{n}$ was conjectured to be a lower bound in \cite{Haiman1994} and proven by \cite{Gordon}; the bound was improved by \cite{AjilaGriffeth}; exact value unknown.
\end{minipage}
\end{table}

\begin{table}
\centering
\begin{tabular}{|c|cccc|}
\hline
\diagbox{$k$}{$j$} & $0$ & $1$ & $2$ & $3$ \\ \hline
$0$ & $0$          & $1$\,\rkey{a}        & $n$\,\rkey{b}      & $n^2-n+1$\,\rkey{c} \\[3pt]
$1$ & $1$\,\rkey{d} & $2^{n-1}$\,\rkey{e}    & \cj{$3^{n-1}$}\,\cjkey{f}                     &       \cj{$\frac{1}{2}F_{3n}$}\,\cjkey{g}                     \\[3pt]
$2$ & $C_n$\,\rkey{h} & \cj{$\mathsf{s}(n)$}\,\cjkey{i} &  \cj{$2^{n-1}C_n$}\,\cjkey{j}    & ---                        \\[3pt]
$3$ &    \cj{$\frac{2}{n(n+1)} \binom{4n+1}{n-1}$}\,\cjkey{k}         & ---                  & ---                  & ---                        \\
\hline
\end{tabular}
\caption{Multiplicity of the sign character of $R_{n}^{(k,j)}$.}
\label{tab:sign}
\smallskip

\begin{minipage}{\textwidth}\footnotesize
\emph{Notes.} Entries marked with a dagger~(\dag) (in red) are conjectural; all others are established.\par\smallskip
\rkey{a}~Proven by \cite{HaglundSergel} (see Proposition~\ref{prop:HS-one-set}).\quad
\rkey{b}~Proven by \cite{MR4381935} (see Proposition~\ref{prop:02-all-characters}).\quad
\rkey{c}~Conjectured by \cite{Bergeron2020}; proven by \cite{lentfer2026signcharactertriagonalfermionic} (see Theorem~\ref{thm:A03-sign}). \quad
\rkey{d}~Classical \cite{Borel,Leray}.\quad
\rkey{e}~Proven by \cite{SwansonWallach1}.\quad
\cjkey{f}~Conjectured by \cite{Bergeron2020}.\quad
\cjkey{g}~$F_n$ denotes the Fibonacci number with initial conditions $F_1=F_2=1$; conjectured by \cite{Bergeron2020}.\quad
\rkey{h}~$C_n$ denotes the Catalan number; conjectured by \cite{Haiman1994}; proven by \cite{Haiman2002}.\quad
\cjkey{i}~$\mathsf{s}(n)$ denotes the little Schr\"oder number; conjectured by \cite{Bergeron2020}. \quad
\cjkey{j}~Conjectured by \cite{Bergeron2020}.\quad
\cjkey{k}~Conjectured by \cite{Haiman1994}.
\end{minipage}
\end{table}

\begin{table}
\centering
\begin{tabular}{|c|cccc|}
\hline
\diagbox{$k$}{$j$} & $0$ & $1$ & $2$ & $3$ \\ \hline
$0$ & $0$          & $1$\,\rkey{a}        & $n+1$\,\rkey{b}      & $\binom{n+2}{2}$\,\rkey{c} \\[3pt]
$1$ & $1$\,\rkey{d} & $2^{n}$\,\rkey{e}    &    \cj{$3^{n}$}\,\cjkey{f}                   &       \cj{$\frac{1}{2}F_{3(n+1)}$}\,\cjkey{g}                      \\[3pt]
$2$ & \cj{$\binom{2n}{n}$}\,\cjkey{h} & \cj{$D_n$}\,\cjkey{i} &    \cj{$2^n\binom{2n}{n}$}\,\cjkey{j}  & ---                        \\[3pt]
$3$ &      ---         & ---                  & ---                  & ---                        \\
\hline
\end{tabular}
\caption{Multiplicity of the sign character of $R_{\mathfrak{B}_n}^{(k,j)}$.}
\label{tab:sign-B_n}
\smallskip

\begin{minipage}{\textwidth}\footnotesize
\emph{Notes.} Entries marked with a dagger~(\dag) (in red) are conjectural; all others are established.\par\smallskip
\rkey{a}~Proposition~\ref{prop:one-set}.\quad
\rkey{b}~Corollary~\ref{cor:typeB-hooks}.\quad
\rkey{c}~Theorem~\ref{thm:03sgn}.\quad
\rkey{d}~Classical \cite{Borel,Leray} (see Theorem~\ref{thm:Stembridge}).\quad
\rkey{e}~Proven by \cite{SwansonWallach1}.\quad
\cjkey{f}~Based on computed data for $n \leq 4$.\quad
\cjkey{g}~$F_n$ denotes the Fibonacci number with initial conditions $F_1=F_2=1$; based on computed data for $n \leq 4$.\quad
\cjkey{h}~Conjectured by \cite{Haiman1994, Stump}; this has been established as a lower bound, following from \cite{Gordon}.\quad
\cjkey{i}~$D_n$ denotes the central Delannoy numbers; based on computed data for $n \leq 4$; this will be discussed in future work.\quad
\cjkey{j}~Based on computed data for $n \leq 4$.
\end{minipage}
\end{table}

\section{Background}\label{sec:background}

\subsection{\texorpdfstring{$q$}{q}-analogues}
Define the $q$-integers by $[r]_q = 1 + q + \cdots + q^{r-1}$.
Define the $q$-analogue of the double factorial by 
\begin{equation}
    [r]_q!! = [r]_q [r-2]_q [r-4]_q \cdots 
    \begin{cases}
        [2]_q & \text{if $r$ even,}\\
        [1]_q & \text{if $r$ odd.}
    \end{cases}
\end{equation}
For two variables $u,v$, define the $u,v$-integers by $[r]_{u,v} := u^{r-1} + u^{r-2} v + \cdots + u v^{r-2} + v^{r-1}$.
By convention, $[r]_{u,v} = 0$ if $r = 0$.
We will use the following expression for two-row Schur functions:
\begin{equation}\label{eq:two-row-schur}
    s_{(a,b)}(u,v) = (uv)^b [a-b+1]_{u,v}.
\end{equation}

\subsection{Hyperoctahedral group statistics}
Let $C_2$ denote the cyclic group with two elements. 
The hyperoctahedral group $\mathfrak{B}_n$ is the wreath product group $C_2 \wr \mathfrak{S}_n$. It is a signed permutation group and is the Weyl group of the root systems of types $B$ and $C$.

See for example \cite{MR3629266} for the following definitions.
A \emph{bipartition} $(\lambda,\mu)$ of $n$ is an ordered pair of two partitions such that $|\lambda| + |\mu| = n$.
We write this by $(\lambda, \mu) \vdash n$.

Given a bipartition $(\lambda,\mu)$, a \emph{standard Young bitableau} of shape $(\lambda,\mu)$ and of size $n = |\lambda| + |\mu|$ is a pair $Q = (Q^+, Q^-)$ of tableaux which are (strictly) increasing along rows and columns, and
\begin{itemize}
    \item $Q^+$ has shape $\lambda$;
    \item $Q^-$ has shape $\mu$;
    \item every element of $[n]$ appears (exactly once) in either $Q^+$ or $Q^-$.
\end{itemize}
Denote by $\SYBT_n$ the set of standard Young bitableaux of size $n$.

The \emph{descent set}, denoted by $\Des(Q)$, of a bitableau $Q = (Q^+,Q^-) \in \SYBT_n$ consists of all $i \in [n-1]$ such that 
\begin{itemize}
\item either $i$ and $i+1$ appear in the same tableau and $i+1$ appears in a strictly lower row than $i$,
\item or $i$ appears in $Q^+$ and $i+1$ appears in $Q^-$.
\end{itemize}
The \emph{flag-major index} of $Q \in \SYBT_n$ is 
\begin{equation}
\fmaj(Q) := 2 \left(\sum_{i \in \Des(Q)} i \right) + |\shape(Q^-)|.
\end{equation}

\subsection{The defining ideals}\label{subsec:defining-ideals}
Throughout, write $S = \C[\bm{x}^{(1)}, \ldots, \bm{x}^{(k)}, \bm{\theta}^{(1)},\ldots, \bm{\theta}^{(j)}]$, and for a finite group $G \subset \GL_n$ acting diagonally on $S$, write
\begin{equation}
    I_G^{(k,j)} := \langle S_+^{G} \rangle
\end{equation}
for the \emph{defining ideal} of equation~\eqref{eq:coinv}, so that $R_G^{(k,j)} = S/I_G^{(k,j)}$.

Let the \emph{polarized power sum} be defined by
\begin{equation}
\begin{aligned}
        p_{\bm{r};\bm{s}} :=& \
        p_{r_1,\ldots,r_k;s_1,\ldots,s_j}(\bm{x}^{(1)}_n,  \ldots, \bm{x}^{(k)}_n,\bm{\theta}_n^{(1)}, \ldots, \bm{\theta}_n^{(j)})\\ 
        =& \sum_{i=1}^n (x_i^{(1)})^{r_1} \cdots (x_i^{(k)})^{r_k} (\theta_i^{(1)})^{s_1} \cdots (\theta_i^{(j)})^{s_j}.
\end{aligned}
\end{equation}
We may add the subscript $n$ to $\bm{x}^{(i)}_n$ to record how many variables are in each set $\bm{x}^{(i)}$.

The following generating sets are results of Weyl \cite{Weyl} in the purely bosonic case $j = 0$. 
As noted in \cite{Zabrocki2019}, following from \cite{OrellanaZabrocki}, they extend to the bosonic-fermionic setting.
\begin{proposition}\label{prop:Weyl-bf}
    The algebra of diagonal invariants
    $\C[\bm{x}^{(1)}, \ldots, \bm{x}^{(k)}, \bm{\theta}^{(1)}, \ldots, \bm{\theta}^{(j)}]^{\mathfrak{S}_n}$ is generated by the polarized power sums
    \begin{equation}\label{eq:gen-set-A}
    \left\{ p_{\bm{r};\bm{s}} \;\middle|\;
    \begin{array}{l}
        1 \leq r_1 + \cdots + r_k + s_1 + \cdots + s_j \leq n, \\
        r_1,\ldots,r_k \in \Z_{\geq 0},\ s_1,\ldots, s_j \in \{0,1\}
    \end{array} \right\}.
\end{equation}
    Consequently, the set~\eqref{eq:gen-set-A} generates the defining ideal $I_{n}^{(k,j)}$.
\end{proposition}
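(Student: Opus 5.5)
The plan is to reduce to the classical bosonic statement by polarization. First, work in the purely bosonic ring $\C[\bm{x}^{(1)},\ldots,\bm{x}^{(k+j)}]$ with $k+j$ sets of commuting variables. Weyl's theorem for $\mathfrak{S}_n$ states that the diagonal invariants there are generated by the polarized power sums of total degree at most $n$. Over $\C$, this follows from the fact that $\C[\bm{x}^{(1)},\ldots]^{\mathfrak{S}_n}$ is the symmetric tensors of $\C[y_1,\ldots,y_{k+j}]^{\otimes n}$. That space is spanned by the monomial symmetric functions $m_{\bm{a}^{(1)},\ldots,\bm{a}^{(n)}}$, which are expressible via Newton-type identities in power sums $p_{\bm{r}}$. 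The degree bound $n$ comes from the fact that products of fewer than $n+1$ distinct-index terms suffice.

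Next, transfer to the superspace $S=\Sym((\C^n)^{\oplus k})\otimes\bigwedge((\C^n)^{\oplus j})$. Consider the surjective $\mathfrak{S}_n$-equivariant linear map $\pi$ from the multilinear-in-fermionic-slots part of $\C[\bm{x}^{(1)},\ldots,\bm{x}^{(k+j)}]$ onto $S$. It is given by restricting to polynomials of degree at most $1$ in each of $\bm{x}^{(k+1)},\ldots,\bm{x}^{(k+j)}$ in each index, and then antisymmetrizing, i.e.\ sending $x^{(k+\ell)}_i\mapsto\theta^{(\ell)}_i$ in a fixed ordering. A cleaner route is the one noted by Orellana--Zabrocki: use the $\mathfrak{S}_n$-invariant decomposition of $S$ into isotypic pieces indexed by multidegrees. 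The key observation is that in each multidegree the invariants of $S$ are the image, under the Reynolds operator, of monomials. A monomial orbit sum in $S$ is $\sum_{\sigma}\sigma(\prod_i \bm{x}_i^{\bm{a}_i}\bm{\theta}_i^{\bm{b}_i})$. Such an orbit sum can be written as a polynomial in the $p_{\bm{r};\bm{s}}$ by the same inductive argument as in the bosonic case. One starts from the product $\prod_t p_{\bm{r}_t;\bm{s}_t}$ corresponding to the "columns" of the monomial, expands it, and notes that the leading term is a nonzero multiple of the orbit sum. The correction terms are orbit sums with fewer distinct indices, on which we induct. Fermionic signs only change coefficients by $\pm1$. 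The one possible failure is a leading coefficient vanishing, which would require two identical fermionic columns. But then the monomial itself vanishes, since $\theta_i^2=0$ and the antisymmetry makes the orbit sum zero, so no problem arises.

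The main obstacle is the degree bound $|\bm{r}|+|\bm{s}|\le n$ rather than mere generation. Here I would follow Weyl: reduce to showing that $p_{\bm{r};\bm{s}}$ with $|\bm{r}|+|\bm{s}|>n$ lies in the algebra generated by lower ones. Introduce formal parameters $t_1,\ldots,t_{k}$ and odd parameters $\tau_1,\ldots,\tau_j$, and set $z_i=\sum_\ell t_\ell x^{(\ell)}_i+\sum_\ell\tau_\ell\theta^{(\ell)}_i$. These $z_i$ are even, so the single-set Newton identities apply: the power sums $\sum_i z_i^d$ for $d>n$ are polynomials in those with $d\le n$. Extracting coefficients of the parameter monomials yields exactly the polarized power sums, giving the required relations. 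Some care is needed with the odd parameters; the $\tau$'s commute appropriately with the $\theta$'s so that $z_i$ is even. Finally, the "consequently" clause is immediate. $I_n^{(k,j)}$ is generated by $S_+^{\mathfrak{S}_n}$, and every element of $S_+^{\mathfrak{S}_n}$ is a polynomial without constant term in the generators, hence lies in the ideal they generate.
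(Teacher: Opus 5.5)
Your proof is correct, but it takes a different route from the paper. The paper proves nothing here: it cites Weyl for $j=0$ and a remark of Zabrocki (based on Orellana--Zabrocki) for the extension to anticommuting variables. Your argument is self-contained and has two independent halves.

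First, write $c(i)$ for a column monomial $c$ placed at index $i$. Symmetrized monomials span $S^{\mathfrak{S}_n}$. Expanding $p_{c_1}\cdots p_{c_m}$ over the nonzero columns of a monomial separates injective index assignments from merged columns. This triangular relation gives generation by all polarized power sums, by induction on the number of nonzero columns.

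Second, the degree bound comes from Weyl's polarization trick, made to work in superspace by adjoining one odd parameter $\tau_\ell$ per fermionic set. Then $z_i=\sum_\ell t_\ell x_i^{(\ell)}+\sum_\ell \tau_\ell\theta_i^{(\ell)}$ is even, the $z_i$ commute, and Newton's identities apply. The coefficient of $t^{\bm r}\tau^{\bm s}$ in $\sum_i z_i^d$ is $\pm\binom{d}{\bm r,\bm s}\,p_{\bm r;\bm s}\neq 0$, with $s_\ell\in\{0,1\}$ forced by $(\tau_\ell\theta_i^{(\ell)})^2=0$.

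Compared with the citation, your approach treats bosonic and fermionic sets uniformly and explains why the bound $n$ is unchanged in superspace. It also adapts directly to $\mathfrak{B}_n$, giving Proposition~\ref{prop:Weyl-B_n-bf} as well: symmetrized monomials having a column of odd total degree vanish, and you apply Newton's identities to $z_1^2,\ldots,z_n^2$. The citation route buys only brevity.

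A few small repairs are needed.
\begin{itemize}
\item Your two halves already cover $j=0$, so the first paragraph can be deleted. That includes the appeal to Weyl, the unused map $\pi$, and the heuristic about where the bound $n$ comes from, which is not the real reason; the Newton argument is.
\item A monomial with two identical odd columns, such as $\theta_1\theta_2$, is not zero. Only its symmetrization vanishes, because the transposition $(1\,2)$ negates it.
\item That worry disappears entirely if you symmetrize over the whole group. For $\mu=c_1(1)\cdots c_m(m)$, the injective terms of $p_{c_1}\cdots p_{c_m}$ sum to exactly $\tfrac{1}{(n-m)!}\sum_{\sigma\in\mathfrak{S}_n}\sigma(\mu)$, with no sign issues. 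So there is no leading coefficient that could vanish.
\end{itemize}
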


\begin{proposition}\label{prop:Weyl-B_n-bf}
    The algebra of diagonal invariants
    $\C[\bm{x}^{(1)}, \ldots, \bm{x}^{(k)}, \bm{\theta}^{(1)}, \ldots, \bm{\theta}^{(j)}]^{\mathfrak{B}_n}$ is generated by the polarized power sums
    \begin{equation}\label{eq:gen-set-B}
    \left\{ p_{\bm{r};\bm{s}} \;\middle|\; 
    \begin{array}{l}
        2 \leq r_1 + \cdots + r_k +s_1 + \cdots +s_j \leq 2n, \\
        r_1 + \cdots + r_k +s_1 + \cdots +s_j\text{ even}, \\
        r_1,\ldots,r_k \in \Z_{\geq 0}, s_1,\ldots, s_j \in \{0,1\}
    \end{array}
    \right\}.
    \end{equation}
    Consequently, the set~\eqref{eq:gen-set-B} generates the defining ideal $I_{\mathfrak{B}_n}^{(k,j)}$.
\end{proposition}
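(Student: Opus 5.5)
We prove Proposition~\ref{prop:Weyl-B_n-bf} by reducing to the type $A$ statement, Proposition~\ref{prop:Weyl-bf}, applied to squared variables. The group $\mathfrak{B}_n = C_2^n \rtimes \mathfrak{S}_n$ acts so that the $i$-th sign flip negates $x_i^{(\ell)}$ and $\theta_i^{(\ell)}$ for every $\ell$ simultaneously. We proceed in three steps: first take invariants under $C_2^n$, then take invariants under $\mathfrak{S}_n$, and finally bound the degrees of the generators.

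\emph{Step 1: the $C_2^n$-invariants.} The ring $S$ has a basis of monomials, and each sign flip scales a monomial by $\pm 1$. So $S^{C_2^n}$ is spanned by the monomials whose total degree in the $i$-th index block $\{x_i^{(\ell)},\theta_i^{(\ell)}\}_\ell$ is even for every $i$. Write $A_i$ for the subalgebra generated by the variables with index $i$. Then $S=\bigotimes_i A_i$ as a super tensor product, and $S^{C_2^n}=\bigotimes_i A_i^{\mathrm{even}}$. The even parts $A_i^{\mathrm{even}}$ are commutative and pairwise commute. Each $A_i^{\mathrm{even}}$ is generated by the degree-$2$ monomials in its block, namely $x_i^{(a)}x_i^{(b)}$, $x_i^{(a)}\theta_i^{(c)}$ and $\theta_i^{(c)}\theta_i^{(d)}$.

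\emph{Step 2: the $\mathfrak{S}_n$-invariants.} Since $S^{\mathfrak{B}_n}=(S^{C_2^n})^{\mathfrak{S}_n}$, we average over $\mathfrak{S}_n$ using the Reynolds operator $\rho$, which is surjective onto the invariants. We then need a type $A$ result for $n$ copies of a general commutative algebra: invariants of $(A^{\mathrm{even}})^{\otimes n}$ are generated by the polarized power sums $\sum_i m(\text{block } i)$, where $m$ ranges over monomials of $A^{\mathrm{even}}$. The plan is to prove this by rerunning the argument behind Proposition~\ref{prop:Weyl-bf}. The key identity expresses the symmetrization of a product $m_1(\text{block }1)\cdots m_r(\text{block }r)$, in characteristic $0$, as a polynomial in the power sums $\sum_i m(\text{block }i)$ for products $m$ of subsets of the $m_j$. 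This is the inclusion--exclusion (Möbius over set partitions) formula, and it needs no commutation relations beyond those holding in the even parts. The resulting generators are exactly the $p_{\bm r;\bm s}$ with $\sum r+\sum s$ even and positive, with $s_\ell\in\{0,1\}$ because $\theta^2=0$.

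\emph{Step 3: the degree bound $\le 2n$.} This step is the main obstacle: we must show that $p_{\bm r;\bm s}$ with total degree $d>2n$ lies in the algebra generated by lower ones. The first thing to try is to substitute $y_i^{(ab)}=x_i^{(a)}x_i^{(b)}$ and similar quadratic "variables." Any monomial in one block of even degree $d=2e$ is a product of $e$ such quadratics, and then the polarized Newton identity in type $A$ bounds the degree by $n$ in the $y$'s, hence by $2n$ in the original variables. The polarized Newton identity is the vanishing of the fully polarized $\det$/Cayley--Hamilton relation on $n+1$ letters, valid for commuting even elements in $n$ blocks. The care needed is that the quadratics involving $\theta$ are nilpotent even elements rather than free variables. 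However, the polarized identity $\sum_{\sigma\in\mathfrak{S}_{e}}\sgn(\sigma)\prod_{\text{cycles}} p_{\text{cycle}}=0$ for $e>n$ holds for any $e$ commuting elements in each of $n$ blocks, so nilpotence is harmless. Solving that identity for the single-cycle term, the $e$-cycle term $\pm (e-1)!\,p_{\text{full}}$, expresses the high-degree power sum in terms of products of lower ones.

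The final claim, that these invariants generate $I_{\mathfrak{B}_n}^{(k,j)}$, is immediate: $S^{\mathfrak{B}_n}_+$ is contained in the ideal generated by any set of homogeneous positive-degree algebra generators.
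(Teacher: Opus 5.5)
Your strategy (pass to $C_2^n$-invariants, symmetrize by M\"obius inversion over set partitions, then lower the degree with a polarized Newton identity on quadratic factors) is a reasonable self-contained route. The paper gives no proof of its own; it only cites Weyl and Zabrocki/Orellana--Zabrocki. However, the assertion in Step 1 that the $A_i^{\mathrm{even}}$ are commutative and pairwise commute is false once $k,j\geq 1$, and Step 3 depends on it.

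Your ``even'' means even total degree, but the sign rule in $S$ is governed by fermionic degree. The quadratic $x_i^{(a)}\theta_i^{(c)}$ has total degree $2$ yet is odd. For example, $(x_1\theta_1)(x_1\xi_1)=-(x_1\xi_1)(x_1\theta_1)$ and $(x_1\theta_1)(x_2\theta_2)=-(x_2\theta_2)(x_1\theta_1)$.

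In Step 2 this is harmless. The M\"obius inversion still works once you insert Koszul signs, and these depend only on the parities of the block monomials and on the set partition, not on the indices.

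In Step 3 it genuinely breaks. There you apply the Newton identity, which is valid for commuting elements, to quadratics that may anticommute. Take $(k,j)=(1,2)$, $n=1$, $e=3$, with $z_1=x_1^2$, $z_2=x_1\theta_1$, $z_3=x_1\xi_1$.
\begin{itemize}
\item If the products inside each cycle are taken in increasing label order, then $\sum_{\sigma\in\mathfrak{S}_3}\sgn(\sigma)\prod_c p_c=2x_1^4\theta_1\xi_1\neq 0$, so the identity fails.
\item If they are taken in cycle order, the sum does vanish, but the two $3$-cycle terms cancel. The coefficient of $p_{z_1z_2z_3}$ is then $0$, so there is nothing to solve for.
\end{itemize}
So your remark that nilpotence is harmless addresses the wrong issue; the obstruction is parity.

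The repair is to choose the factorization of a block monomial $m$ of degree $2e>2n$ into quadratics with care: pair bosonic variables with bosonic ones and fermionic variables with fermionic ones. Since $\sum_t r_t\equiv\sum_t s_t \pmod 2$, at most one mixed factor $x^{(a)}\theta^{(c)}$ is left over, and all other factors are even. Then:
\begin{itemize}
\item Every term in the expansion of $\prod_c p_c$ contains exactly one factor $z^{(t)}_{i_t}$ per label $t$, hence at most one odd factor, so all its factors commute.
\item The rearrangement into $\prod_t z^{(t)}_{i_t}$ is therefore sign-free.
\item The sum of $\sgn$ over the permutations preserving the fibres of $i\colon[e]\to[n]$ vanishes by pigeonhole, since $e>n$.
\item Each of the $(e-1)!$ $e$-cycles contributes exactly $p_m$, so the top coefficient is $(-1)^{e-1}(e-1)!\neq 0$.
\end{itemize}
With this choice, your recursion and induction on degree work verbatim. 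The final deduction that the set generates $I_{\mathfrak{B}_n}^{(k,j)}$ is fine as written.
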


Since $G$ preserves each multihomogeneous component of $S$, the invariant space $S^G$, and hence $S_+^G$, is spanned by multihomogeneous elements. 
Therefore $I_G^{(k,j)}$ is multihomogeneous and $R_G^{(k,j)}$ inherits the multigrading of $S$.

\begin{remark}\label{rem:averaging}
    Since $G$ is finite and we work over $\C$, every $G$-invariant element of $R_G^{(k,j)}$ lifts to a $G$-invariant polynomial: if $[f] \in R_G^{(k,j)}$ is invariant, then $g \cdot f \equiv f \pmod{I_G^{(k,j)}}$ for all $g \in G$, so the average $\frac{1}{|G|}\sum_{g \in G} g \cdot f$ is a $G$-invariant polynomial whose image in $R_G^{(k,j)}$ is $[f]$.
    (Note that this is an application of the Reynolds operator.)
    The same argument applies to the $H$-invariant elements for any subgroup $H \leq G$, since $I_G^{(k,j)}$ is $H$-stable.
    Since $I_G^{(k,j)}$ is multihomogeneous, the argument applies multidegree by multidegree.
    This Reynolds operator argument is used in the proofs of Lemma~\ref{lem:trivial} and Proposition~\ref{prop:parabolic-invariants}.
\end{remark}

\subsection{Hilbert and Frobenius series}
The ring $R_G^{(k,j)}$ is multigraded by:
\begin{equation}\label{eq:R-grading-Hilbert}
R_G^{(k,j)} = \bigoplus_{r_1,\ldots,r_k; s_1, \ldots, s_j \geq 0} (R_G^{(k,j)})_{r_1,\ldots,r_k; s_1, \ldots, s_j},
\end{equation}
where $r_i$ is the degree of the variables $\bm{x}^{(i)}$ and $s_i$ is the degree of the variables $\bm{\theta}^{(i)}$.
Thus its \emph{(multigraded) Hilbert series} is
\begin{equation} 
\Hilb(R_G^{(k,j)}; \mathbf{q}; \mathbf{u}) := \sum_{r_1,\ldots,r_k; s_1, \ldots, s_j \geq 0} \dim\left((R_G^{(k,j)})_{r_1,\ldots,r_k; s_1, \ldots, s_j} \right)q_1^{r_1} \cdots q_k^{r_k}u_1^{s_1} \cdots u_j^{s_j}.
\end{equation}
Here, $\mathbf{q} = (q_1,\ldots,q_k)$ and $\mathbf{u} = (u_1,\ldots,u_j)$. 
When $k \leq 2$, we may use $q = q_1$ and $t = q_2$. 
When $j \leq 3$, we may use $u = u_1$, $v = u_2$, and $w = u_3$.
Also, when $j \leq 3$ we write $\bm{\theta}, \bm{\xi}, \bm{\rho}$  for $\bm{\theta}^{(1)},  \bm{\theta}^{(2)}, \bm{\theta}^{(3)}$ respectively.

Let $\ch(M)$ denote the character of a $G$-module $M$. 
The irreducible representations of $\mathfrak{S}_n$ are indexed by partitions $\nu \vdash n$; we denote them by $V^\nu$. 
For a fixed $n$, the Frobenius characteristic map $F$ is an isomorphism between the group of virtual $\mathfrak{S}_n$-characters and the degree $n$ component of the ring of symmetric functions $\Lambda_\Z^n(\z)$, given by $F(\chi^\mu) = s_\mu(\z)$, where $\chi^\mu$ is the irreducible $\mathfrak{S}_n$-character indexed by $\mu$, and $s_\mu(\z)$ is a Schur function (see \cite[Section 7.18]{StanleyEC2}). 

Regarding $R_n^{(k,j)}$, we can apply the Frobenius characteristic map $F$ to the character of each multigraded component, and obtain the \emph{multigraded Frobenius series:} 
\begin{equation} \Frob(R_n^{(k,j)}; \mathbf{q}; \mathbf{u}) := \sum_{r_1,\ldots,r_k; s_1, \ldots, s_j \geq 0} F\ch\left((R_n^{(k,j)})_{r_1,\ldots,r_k; s_1, \ldots, s_j} \right)q_1^{r_1} \cdots q_k^{r_k}u_1^{s_1} \cdots u_j^{s_j}.\end{equation}

Next we will briefly overview some of the basics of representation theory of $\mathfrak{B}_n$ so that we can define the type $B$ Frobenius series.
The representation theory of $\mathfrak{B}_n$ is well understood (see \cite[Section 5.5]{GeckPfeiffer} for a type $B$ exposition; see \cite[Chapter I, Appendix B]{Macdonald} for wreath product groups more generally). 
Since it is a finite group, up to isomorphism, all of its representations may be decomposed into a direct sum of irreducible representations.
The irreducible representations of $\mathfrak{B}_n$ are indexed by bipartitions $(\lambda, \mu) \vdash n$; we denote them by $V^{(\lambda, \mu)}$. 
Let $\chi^{(\lambda, \mu)}$ denote the associated irreducible character.

For an element $g \in \mathfrak{B}_n$, we may write it as $g = (\epsilon, \sigma)$, where $\sigma \in \mathfrak{S}_n$ and $\epsilon = (\epsilon_1,\ldots,\epsilon_n) \in \{-1,+1\}^n$. 
The action is given by $g \cdot x_i^{(\ell)} = \epsilon_i x_{\sigma (i)}^{(\ell)}$ and $g \cdot \theta_i^{(\ell)} = \epsilon_i \theta_{\sigma (i)}^{(\ell)}$.
The \emph{determinant character} $\chi^{(\varnothing, (1^n))}$ is given by  $\det(g) = \sgn(\sigma) \prod_{i=1}^n \epsilon_i$.
By \emph{sign representation}, we mean the representation corresponding to the determinant character, that is, the sign representation of $\mathfrak{B}_n$ is $V^{(\varnothing, (1^n))} \cong \varepsilon_{\mathfrak{B}_n}$. 

The \emph{trivial representation} of $\mathfrak{B}_n$ is $V^{((n),\varnothing)} \cong \mathbbm{1}_{\mathfrak{B}_n}$. 
The defining representation of $\mathfrak{B}_n$ is isomorphic to $V^{((n-1),(1))}$. 
Then the exterior power $\wedge^i V^{((n-1),(1))} \cong V^{((n-i),(1^i))}$ for all $i \in \{0,\ldots, n\}$ \cite[Proposition 5.5.7]{GeckPfeiffer}. 
Notice that the boundary cases of $i=0$ and $i=n$ are the trivial and sign representations, respectively.
Because the irreducible $\mathfrak{B}_n$ characters may be afforded by $\mathbb{Q}[\mathfrak{B}_n]$-modules, every irreducible $\mathfrak{B}_n$-module is isomorphic to its dual \cite[Theorem 5.5.6]{GeckPfeiffer}.

There is also a type $B$ Frobenius characteristic map $F^B$, which sends the character $\chi^{(\lambda, \mu)}$ to the symmetric function $s_\lambda(\x)s_\mu(\y)$, where $\x$ and $\y$ are two distinct infinite sets of auxiliary variables.
We may denote $s_{\lambda}(\x) s_{\mu}(\y)$ by $s_{(\lambda,\mu)}(\x,\y)$. 
For a bitableau $Q$, we may denote $s_{\shape(Q^+)}(\x)s_{\shape(Q^-)}(\y)$ by $s_{\shape(Q)} (\x,\y)$, where $\shape(Q) = (\shape(Q^+), \shape(Q^-))$.

Regarding $R_{\mathfrak{B}_n}^{(k,j)}$, we can apply the type $B$ Frobenius characteristic map $F^B$ to the character of each multigraded component, and obtain the \emph{(type $B$) multigraded Frobenius series:} 
\begin{equation} \Frob(R_{\mathfrak{B}_n}^{(k,j)}; \mathbf{q}; \mathbf{u}) := \sum_{r_1,\ldots,r_k; s_1, \ldots, s_j \geq 0} F^B\ch\left((R_{\mathfrak{B}_n}^{(k,j)})_{r_1,\ldots,r_k; s_1, \ldots, s_j} \right)q_1^{r_1} \cdots q_k^{r_k}u_1^{s_1} \cdots u_j^{s_j}.\end{equation}
The context makes clear whether the type A or type B Frobenius characteristic map and Frobenius series is meant.

The Hall inner product on the ring of symmetric functions extends to
$\Lambda(\x) \otimes \Lambda(\y)$ by $\langle f_1(\x) g_1(\y) , f_2(\x) g_2(\y) \rangle = \langle f_1,f_2 \rangle \langle g_1,g_2\rangle$; thus $\{ s_\lambda(\x) s_\mu(\y) \}$ is an orthonormal basis and $F^B$ is an isometry.
We will use that $F^B(\mathbbm{1}_{\mathfrak{B}_n}) = h_n(\x)$ and $F^B(\varepsilon_{\mathfrak{B}_n}) = e_n(\y)$.

Let $K$ and $H$ be finite groups. 
For a $K$-module $P$ and $H$-module $Q$, by $P \boxtimes Q$ we mean the outer tensor product of $P$ and $Q$, which is a $K \times H$-module (see for example \cite[Section 1.2]{MR3839282}).
We will use that, for a $\mathfrak{B}_a$-module $U$ and a $\mathfrak{B}_b$-module $W$ with $a+b=n$, the map $F^B \ch$ takes $\Ind_{\mathfrak{B}_a \times \mathfrak{B}_b}^{\mathfrak{B}_n} (U \boxtimes W)$ to $F^B\ch(U) F^B\ch(W)$ (see \cite[Chapter I, Appendix B]{Macdonald}).

Chevalley \cite{Chevalley} (see also \cite{MR59914}) established that $R_{\mathfrak{B}_n}^{(1,0)}$ is isomorphic to the regular representation of $\mathfrak{B}_n$.
An important result of Stembridge gives the graded $\mathfrak{B}_n$-module structure of $R_{\mathfrak{B}_n}^{(1,0)}$ using the $\fmaj$ statistic.
\begin{theorem}[\cite{Stembridge}]\label{thm:Stembridge}
For $n \geq 1$,
\begin{equation}\label{eq:stembridge}
\Frob(R_{\mathfrak{B}_n}^{(1,0)};q) =  \sum_{Q \in \SYBT_n} q^{\fmaj(Q)} s_{\shape(Q)}(\x,\y).
\end{equation} 
\end{theorem}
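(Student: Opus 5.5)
The plan is to compute the graded character of $R_{\mathfrak{B}_n}^{(1,0)}$ by dividing the graded character of the whole polynomial ring by the Hilbert series of the invariants. The resulting quotient is then expanded combinatorially through a $P$-partition (standardization) argument.

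\textbf{Step 1 (reduction to the polynomial ring).} Write $S=\C[x_1,\dots,x_n]$. By Chevalley's theorem, $S \cong R_{\mathfrak{B}_n}^{(1,0)} \otimes S^{\mathfrak{B}_n}$ as graded $\mathfrak{B}_n$-modules. By Proposition~\ref{prop:Weyl-B_n-bf}, $S^{\mathfrak{B}_n}$ is a polynomial ring on $p_2,p_4,\dots,p_{2n}$. Hence
\begin{equation*}
\Frob(R_{\mathfrak{B}_n}^{(1,0)};q)=\prod_{i=1}^n(1-q^{2i})\cdot \Frob(S;q).
\end{equation*}

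\textbf{Step 2 (Frobenius series of $S$).} Group monomials $x_1^{d_1}\cdots x_n^{d_n}$ according to which exponents are even and which are odd. The span of monomials with exactly $b$ odd exponents is a module induced from $\mathfrak{B}_a\times\mathfrak{B}_b$, with $a+b=n$. Its $\mathfrak{B}_a$ factor is the trivial sign-twist on $\C[x^2]$-type pieces, and its $\mathfrak{B}_b$ factor carries the sign of $C_2$ in each coordinate. Using the induction rule for $F^B\ch$ recalled above, this should give
\begin{equation*}
\Frob(S;q)=\sum_{a+b=n} h_a\!\left[\tfrac{X}{1-q^2}\right] h_b\!\left[\tfrac{qY}{1-q^2}\right]
=\sum_{(\lambda,\mu)\vdash n} s_\lambda(1,q^2,q^4,\dots)\,q^{|\mu|}s_\mu(1,q^2,\dots)\, s_\lambda(\x)s_\mu(\y).
\end{equation*}
So the coefficient of $s_\lambda(\x)s_\mu(\y)$ is the generating function for pairs $(T^+,T^-)$ of semistandard tableaux of shapes $\lambda,\mu$ with entries in $\Z_{\ge0}$. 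Each cell of $T^+$ with entry $k$ gets weight $q^{2k}$, and each cell of $T^-$ with entry $k$ gets weight $q^{2k+1}$. Equivalently, fill $\lambda$ with even values and $\mu$ with odd values, weakly increasing along rows and strictly increasing down columns, with weight $q^{\text{total}}$.

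\textbf{Step 3 (standardization).} Given such a pair of fillings, order all $n$ cells by value. Break ties first by reading order within each tableau; ties between the two tableaux cannot occur, since values have different parities. Record the resulting order as a standard bitableau $Q=(Q^+,Q^-)$ of shape $(\lambda,\mu)$.

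The main step is to check that the value sequence $f_1\le f_2\le\cdots\le f_n$ (listed in the order given by $Q$) satisfies the following constraints.
\begin{itemize}
\item We need $f_{i+1}>f_i$ exactly at the forced positions. These are the descents of $Q$ within a single tableau, coming from column strictness. They also include every $i$ with $i\in Q^+$ and $i+1\in Q^-$, and every $i$ with $i\in Q^-$ and $i+1\in Q^+$, because the parities differ.
\item The parity condition requires that the jump at $i$ be odd exactly when $i$ and $i+1$ lie in different tableaux.
\end{itemize}
Subtracting the minimal admissible sequence should leave an arbitrary weakly increasing sequence of even increments. The minimal sequence has weight $q^{2\sum_{i\in\Des(Q)}i+|\mu|}=q^{\fmaj(Q)}$, and the free part contributes $1/\prod_{i=1}^n(1-q^{2i})$.

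Here the case $i\in Q^-$, $i+1\in Q^+$ is the delicate one. A jump of $+1$ from odd to even is forced but is \emph{not} a descent. One must verify that the bookkeeping of the parity offsets exactly accounts for this, so that the minimal sequence has total $2\sum_{\Des}i+|\mu|$. I expect this to be the main obstacle, and I would check it carefully on small cases.

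\textbf{Step 4 (conclusion).} Once the bijection in Step 3 is established, it shows that the coefficient of $s_\lambda(\x)s_\mu(\y)$ in $\Frob(S;q)$ equals
\begin{equation*}
\frac{\sum_{\shape(Q)=(\lambda,\mu)}q^{\fmaj(Q)}}{\prod_{i=1}^n(1-q^{2i})}.
\end{equation*}
Combining this with Step 1 yields \eqref{eq:stembridge}.
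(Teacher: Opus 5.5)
The paper gives no proof of this theorem; it quotes it from Stembridge. So I am comparing your argument against the standard proof.

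\textbf{Steps 1--2 are fine.} Chevalley's decomposition and the induced-module computation correctly show that the coefficient of $s_\lambda(\x)s_\mu(\y)$ in $\Frob(S;q)$ is $q^{|\mu|}s_\lambda(1,q^2,q^4,\dots)\,s_\mu(1,q^2,q^4,\dots)$. Your standardization is also a valid bijection onto pairs $(Q,f)$.

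\textbf{The gap is in Step 3.} You assert that the minimal admissible sequence has weight $q^{\fmaj(Q)}$. With your increasing convention this is false, and the failure is not confined to the $Q^-\to Q^+$ switches. Write $\Des_{\mathrm{s}}(Q)$ for the descents $i$ with $i,i+1$ in the same tableau, and $\mathrm{Sw}(Q)$ for the $i$ with $i,i+1$ in different tableaux. Then
$f_i^{\min}=2\#\{k<i: k\in\Des_{\mathrm{s}}(Q)\}+\#\{k<i: k\in\mathrm{Sw}(Q)\}+[1\in Q^-]$, so the minimal weight is
\[
2\sum_{k\in\Des_{\mathrm{s}}(Q)}(n-k)+\sum_{k\in\mathrm{Sw}(Q)}(n-k)+n\,[1\in Q^-],
\]
which is a ``co-$\fmaj$'' statistic, not $\fmaj(Q)$. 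Small cases already disagree:
\begin{itemize}
\item For $n=2$, $Q=(\{1\},\{2\})$ has minimal filling $(0,1)$ of weight $q$, while $\fmaj(Q)=3$.
\item $Q=(\{2\},\{1\})$ has minimal filling $(1,2)$ of weight $q^3$, while $\fmaj(Q)=1$.
\item Even with $\mu=\varnothing$, $Q^+$ with rows $\{1,2\},\{3\}$ gives $q^2$ versus $q^{\fmaj(Q)}=q^4$. This is the classical maj versus comaj issue in Stanley's formula for $s_\lambda(1,q,q^2,\dots)$.
\end{itemize}
The sums over a fixed shape do agree. However, that is an additional equidistribution statement which your proposal does not supply, so Step 4 does not follow as written.

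\textbf{How to repair it.} Standardize in the other direction.
\begin{itemize}
\item Since $s_\lambda$ is symmetric, you may use reverse semistandard fillings (rows weakly decreasing, columns strictly decreasing). Label the cells by decreasing value, breaking ties left to right, so that $f_1\ge f_2\ge\cdots\ge f_n$.
\item With $d_k=f_k-f_{k+1}$ and $d_n=f_n$, you have $\sum_i f_i=\sum_k k\,d_k$. The free part still contributes $\prod_{i=1}^n(1-q^{2i})^{-1}$.
\item The minimal weight becomes $2\sum_{k\in\Des_{\mathrm{s}}(Q)}k+\sum_{k\in\mathrm{Sw}(Q)}k+n\,[n\in Q^-]$.
\item Now check each maximal run $a,a+1,\dots,b$ of labels in $Q^-$. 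It contributes $(a-1)+b$ to the last two terms: $a-1$ from the switch at $a-1$, and $b$ from either the switch at $b$ or the term $n[n\in Q^-]$ when $b=n$. This equals $2(a-1)+(b-a+1)$, which is its contribution to $2\sum_{k\in Q^+,\,k+1\in Q^-}k+|\mu|$.
\end{itemize}
Hence the minimal weight is exactly $\fmaj(Q)$. This run-by-run cancellation is the parity bookkeeping you anticipated, but it only works in this orientation.

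Alternatively, keep your convention and compose with a shape-preserving involution: relabel $i\mapsto n+1-i$, then apply Sch\"utzenberger evacuation to each component. This sends $\Des_{\mathrm{s}}$ to $n-\Des_{\mathrm{s}}$, reverses the pattern of $Q^{\pm}$ labels, and turns your statistic into $\fmaj$.
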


The Hilbert series of $R_{\mathfrak{B}_n}^{(1,0)}$ is as follows.
\begin{theorem}[\cite{Chevalley, MR59914}]\label{thm:Chevalley-Shephard-Todd}
For $n \geq 1$,
    \begin{equation}
\Hilb(R_{\mathfrak{B}_n}^{(1,0)};q) =  [2n]_q!!.
\end{equation} 
\end{theorem}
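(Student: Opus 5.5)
The plan is to compute the Hilbert series of the quotient by exhibiting $\C[\bm{x}]$ as a free module over the invariant ring $\C[\bm{x}]^{\mathfrak{B}_n}$, and then dividing Hilbert series. Here $\bm{x}=\bm{x}^{(1)}$, and $k=1$, $j=0$ throughout.

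\textbf{Step 1: the invariant ring and the ideal.} By Proposition~\ref{prop:Weyl-B_n-bf} with $k=1,j=0$, the invariant ring $\C[\bm{x}]^{\mathfrak{B}_n}$ is generated by the power sums $p_2,p_4,\ldots,p_{2n}$. These are the power sums $p_1,\ldots,p_n$ evaluated in the variables $y_i=x_i^2$. By Newton's identities, they generate the same algebra as $e_1(\bm{y}),\ldots,e_n(\bm{y})$, where $\bm{y}=(x_1^2,\ldots,x_n^2)$. Since $e_1(\bm{y}),\ldots,e_n(\bm{y})$ are algebraically independent, $\C[\bm{x}]^{\mathfrak{B}_n}$ is a polynomial ring on generators of degrees $2,4,\ldots,2n$. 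Its Hilbert series is therefore $\prod_{i=1}^n (1-q^{2i})^{-1}$, and the defining ideal is $I=\langle e_1(\bm{y}),\ldots,e_n(\bm{y})\rangle$.

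\textbf{Step 2: freeness, the main point.} I will show that $\C[\bm{x}]$ is a free module over $A=\C[e_1(\bm{y}),\ldots,e_n(\bm{y})]$ via a tower of two free extensions.
\begin{itemize}
\item First, $\C[\bm{x}]$ is free over $\C[\bm{y}]=\C[x_1^2,\ldots,x_n^2]$, with basis the squarefree monomials $x_1^{a_1}\cdots x_n^{a_n}$, $a\in\{0,1\}^n$. This is immediate from unique monomial decomposition.
\item Second, $\C[\bm{y}]$ is free over its symmetric polynomials $A$. This is the classical type $A$ fact, with the Artin basis $\{y^b : 0\le b_i\le i-1\}$.
\end{itemize}
Freeness is transitive, so $\C[\bm{x}]$ is free over $A$ on a homogeneous basis $\{m_\alpha\}$. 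This is the step I expect to need the most care. The essential input is the type $A$ Artin basis; an alternative is to note that $e_1(\bm{y}),\ldots,e_n(\bm{y})$ is a homogeneous system of parameters in the Cohen--Macaulay ring $\C[\bm{x}]$. The remaining steps are bookkeeping.

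\textbf{Step 3: Hilbert series computation.} Because $\C[\bm{x}]\cong A\otimes_\C \operatorname{span}\{m_\alpha\}$ as graded spaces, we get $R_{\mathfrak{B}_n}^{(1,0)}=\C[\bm{x}]/A_+\C[\bm{x}]\cong \C[\bm{x}]\otimes_A \C$. Its homogeneous basis is the images of the $m_\alpha$. Hence
\begin{equation*}
\Hilb(R_{\mathfrak{B}_n}^{(1,0)};q)=\frac{\Hilb(\C[\bm{x}];q)}{\Hilb(A;q)}=\frac{\prod_{i=1}^n(1-q^{2i})}{(1-q)^n}=\prod_{i=1}^n [2i]_q=[2n]_q!!.
\end{equation*}
One can also read this off directly from the tower: the Hilbert series is $(1+q)^n$ times $[n]_{q^2}!$, which equals $\prod_{i=1}^n [2i]_q$ since $[2i]_q=(1+q)[i]_{q^2}$.
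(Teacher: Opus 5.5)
Your proof is correct. The paper gives no argument of its own for this statement. It cites Chevalley and Shephard--Todd: for any finite reflection group $W$, the invariant ring is a polynomial ring on homogeneous generators of degrees $d_1,\ldots,d_n$ and $\C[\bm{x}]$ is free over it, so $\Hilb(R_W^{(1,0)};q)=\prod_i [d_i]_q$; for $\mathfrak{B}_n$ the degrees are $2,4,\ldots,2n$.

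You use the same division of Hilbert series. The difference is that you replace the general freeness theorem by the tower $\C[\bm{x}] \supseteq \C[x_1^2,\ldots,x_n^2] \supseteq \C[x_1^2,\ldots,x_n^2]^{\mathfrak{S}_n}$, which reduces everything to the type $A$ Artin basis. This makes your argument self-contained and yields an explicit monomial basis. The products of squarefree monomials with Artin monomials in the $x_i^2$ are exactly $\{x_1^{c_1}\cdots x_n^{c_n} : 0 \leq c_i \leq 2i-1\}$. That is precisely the basis of $R_{\mathfrak{B}_n}^{(1,0)}$ the paper later imports from Swanson--Wallach in the linear-independence step of Proposition~\ref{prop:parabolic-invariants}.

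The cited route buys generality, since it works for any reflection group. Via Chevalley it also gives the stronger fact that $R_{\mathfrak{B}_n}^{(1,0)}$ affords the regular representation, which your argument does not give and the statement does not need.

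A minor simplification: Step 1 does not need Proposition~\ref{prop:Weyl-B_n-bf} or Newton's identities. Invariance under sign changes forces all exponents to be even, and the fundamental theorem of symmetric polynomials in the $x_i^2$ then gives the invariant ring directly.
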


In type $A$, from the (multigraded) Frobenius series of any $\mathfrak{S}_n$-module, one can recover the (multigraded) Hilbert series, by taking the inner product with $(h_{1})^n(\z)$. 
We now explain how this extends to the type $B$ setting. A type $B$ Frobenius series is of the form
\begin{equation}\label{eq:general-type-B-Frob}
    \sum_{(\lambda,\mu) \vdash n} c(\lambda,\mu) s_{\lambda}(\x) s_{\mu} (\y),
\end{equation}
for some coefficients $c(\lambda,\mu)$ which are polynomials in the grading variables with nonnegative integer coefficients.

Write $f^\lambda$ for the number of standard Young tableaux of shape $\lambda$.
Since $\dim V^{(\lambda,\mu)} = \binom{n}{|\lambda|} f^\lambda f^\mu$, the Hilbert series corresponding to equation~\eqref{eq:general-type-B-Frob} is
\begin{equation}
    \sum_{(\lambda,\mu) \vdash n} c(\lambda,\mu) \binom{n}{|\lambda|} f^\lambda f^\mu.
\end{equation}

Since $\langle s_\lambda , h_{(1^i)} \rangle = f^\lambda \delta_{|\lambda|,i}$, in order to obtain the Hilbert series, pair a type $B$ Frobenius series with
\begin{equation}\label{eq:to-pair}
    \sum_{i=0}^n \binom{n}{i} h_{(1^i)}(\x)  h_{(1^{n-i})}(\y) = \left(h_1(\x) + h_1(\y) \right)^n = \sum_{i=0}^n \binom{n}{i} h_{(1^{n-i})}(\x)  h_{(1^{i})}(\y).
\end{equation}

\section{The coinvariant ring \texorpdfstring{$R_{\mathfrak{B}_n}^{(0,1)}$}{RBn(0,1)}}\label{sec:01}

In this section, we derive a monomial basis, Frobenius series, Hilbert series, and the dimension of $R_{\mathfrak{B}_n}^{(0,1)}$.
While all of these can be deduced from known results, we give direct proofs as a warm-up.

The coinvariant ring $R_{\mathfrak{B}_n}^{(0,1)}$ is simply the exterior algebra on $\theta_1,\ldots,\theta_n$.
Explicitly in anticommuting variables,
\begin{equation}
    R_{\mathfrak{B}_n}^{(0,1)} = \C[\theta_1,\ldots,\theta_n]/{\langle\C[\theta_1,\ldots,\theta_n]_+^{\mathfrak{B}_n}\rangle} = \C[\theta_1,\ldots,\theta_n],
\end{equation}
since $\C[\theta_1,\ldots,\theta_n]_+^{\mathfrak{B}_n} = \{0\}$, by Proposition~\ref{prop:Weyl-B_n-bf}.
Therefore it is easy to determine a monomial basis.\footnote{The monomial basis is a special case of the type-independent basis given by Kim--Rhoades \cite{MR4381935} for $R_{W}^{(0,2)}$.} 
For $T = \{t_1 <\cdots <t_m\} \subseteq \{1,\ldots,n\}$, denote the ordered product $\theta_{t_1} \cdots \theta_{t_m}$ by $\theta_T$.

\begin{proposition}
    For $n \geq 1$, a monomial basis for $R_{\mathfrak{B}_n}^{(0,1)}$ is given by $\{\theta_T : T \subseteq \{1,\ldots,n\} \}$. 
\end{proposition}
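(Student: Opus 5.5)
Since the ideal $I_{\mathfrak{B}_n}^{(0,1)}$ is zero, this reduces to the standard fact that the ordered monomials $\theta_T$ form a basis of the exterior algebra $\C[\theta_1,\ldots,\theta_n]$. The plan has two steps: first identify the ideal, then verify spanning and linear independence of $\{\theta_T\}$ directly.

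\emph{Step 1: the ideal vanishes.} By Proposition~\ref{prop:Weyl-B_n-bf} with $k=0$ and $j=1$, the invariant algebra is generated by polarized power sums $p_{;s_1}=\sum_i \theta_i^{s_1}$ with $s_1\in\{0,1\}$ and $2\le s_1\le 2n$. Since $s_1\le 1$, no such $s_1$ exists, so $\C[\bm\theta]^{\mathfrak{B}_n}_+=\{0\}$. One can also see this directly: the element $-\mathrm{id}\in\mathfrak{B}_n$ acts on a degree-$d$ component by $(-1)^d$, which kills all odd degrees. In even degree $d\ge 2$, for any nonzero $f=\sum_{|T|=d}c_T\theta_T$, choose $T$ with $c_T\neq0$ and apply the sign change $\epsilon$ that is $-1$ at exactly one index of $T$ (and $+1$ elsewhere). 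This element sends $\theta_T$ to $-\theta_T$, so its action on $f$ reverses the sign of the coefficient $c_T$. Hence $f$ is not invariant. Therefore $I_{\mathfrak{B}_n}^{(0,1)}=0$ and $R_{\mathfrak{B}_n}^{(0,1)}=\C[\theta_1,\ldots,\theta_n]$.

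\emph{Step 2: basis.} For spanning, every monomial in the $\theta_i$ can be reordered using anticommutativity, at the cost of a sign, into increasing order. Any monomial with a repeated variable is zero because $\theta_i^2=0$. So every monomial is either $0$ or $\pm\theta_T$ for some $T$. For linear independence, use the identification $\C[\bm\theta]\cong\bigwedge(\C^n)$ recorded in the introduction. There the $\theta_T$ correspond to the standard basis $e_{t_1}\wedge\cdots\wedge e_{t_m}$ of $\bigwedge^m\C^n$, which has $\binom nm$ elements. Hence $\{\theta_T\}$ is a basis of dimension $2^n$.

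No step is a real obstacle. The only point needing care is the vanishing of the invariant ideal, and Proposition~\ref{prop:Weyl-B_n-bf} settles it at once.
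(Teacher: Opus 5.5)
Your proof is correct and follows the paper's own route. The paper likewise uses Proposition~\ref{prop:Weyl-B_n-bf} to see that $\C[\theta_1,\ldots,\theta_n]_+^{\mathfrak{B}_n}=\{0\}$, so that $R_{\mathfrak{B}_n}^{(0,1)}$ is the full exterior algebra with standard basis $\{\theta_T\}$. Your direct sign-change check is an optional extra; the single sign flip at an index of $T$ already works in every positive degree, so the $-\mathrm{id}$ parity step is not needed.
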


Turning our attention to the module structure, we recall the following type $A$ result.
\begin{proposition}[\cite{HaglundSergel}]\label{prop:HS-one-set}
For $n \geq 1$,
        \begin{equation}
        \Frob(R_n^{(0,1)}; u) = \sum_{i=0}^{n-1} u^i s_{(n-i,1^i)}(\z).
    \end{equation}
\end{proposition}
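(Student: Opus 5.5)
The statement to prove is Proposition~\ref{prop:HS-one-set}: for $n \geq 1$, $\Frob(R_n^{(0,1)};u) = \sum_{i=0}^{n-1} u^i s_{(n-i,1^i)}(\z)$. By Proposition~\ref{prop:Weyl-bf} with $k=0$ and $j=1$, the defining ideal $I_n^{(0,1)}$ is generated by the single polarized power sum $p_{;1} = \theta_1 + \cdots + \theta_n$. Products of two or more $\theta$'s are not power sums with $s_1 \in \{0,1\}$, so this is the only generator. Hence
\begin{equation*}
R_n^{(0,1)} = \textstyle\bigwedge(\C^n)/\langle \theta_1+\cdots+\theta_n\rangle.
\end{equation*}
The plan is to identify this quotient degree by degree with exterior powers of the standard representation.

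Write $\C^n = \mathbf{1} \oplus V$ as $\mathfrak{S}_n$-modules, where $\mathbf{1}$ is spanned by $e = \theta_1+\cdots+\theta_n$ and $V \cong V^{(n-1,1)}$. Choose a basis of $V$, for instance $\theta_i - \theta_n$ for $i<n$. Then $\bigwedge(\C^n) \cong \bigwedge(\mathbf{1}) \otimes \bigwedge(V)$, and every element can be written uniquely as $a + e\,b$ with $a,b \in \bigwedge(V)$. The ideal generated by $e$ is exactly $e\bigwedge(\C^n) = e\bigwedge(V)$, because $e^2=0$ and $e$ anticommutes with odd elements, so two-sided and one-sided ideals coincide up to sign. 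Therefore the quotient is isomorphic to $\bigwedge(V)$ as a graded $\mathfrak{S}_n$-module, with the degree in $\theta$ matching the exterior degree. The key point is that this decomposition is $\mathfrak{S}_n$-equivariant, since $e$ is invariant and $V$ is a submodule.

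It remains to use the classical fact that $\bigwedge^i V^{(n-1,1)} \cong V^{(n-i,1^i)}$ for $0 \leq i \leq n-1$. This is the type $A$ analogue of the cited \cite[Proposition 5.5.7]{GeckPfeiffer}, and it also follows from $\bigwedge^i \C^n \cong \bigwedge^i V \oplus \bigwedge^{i-1} V$. On Frobenius images, $F\ch\bigl(\bigwedge^i \C^n\bigr) = h_{n-i}e_i$, and the Pieri rule gives $h_{n-i}e_i = s_{(n-i,1^i)} + s_{(n-i+1,1^{i-1})}$. Inducting on $i$ then yields $F\ch\bigl(\bigwedge^i V\bigr) = s_{(n-i,1^i)}$, and $\bigwedge^n V = 0$. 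Summing over $i$ with weight $u^i$ gives the formula.

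The only mildly delicate step is checking that the ideal generated by $e$ in the noncommutative exterior algebra equals $e\bigwedge(V)$, with no extra elements from two-sided multiplication. This follows from supercommutativity, since $f e = \pm e f$ for homogeneous $f$.
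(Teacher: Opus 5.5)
Your proof is correct and complete. The paper does not prove this statement itself; it cites \cite{HaglundSergel}. Your argument is the type $A$ version of the paper's proof of the type $B$ analogue, Proposition~\ref{prop:one-set}: identify the ring with the exterior algebra on the reflection representation, then use that its exterior powers are the hook irreducibles.

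There are two differences. First, in type $A$ the defining ideal is not zero, so you must quotient by $e=\theta_1+\cdots+\theta_n$. Your splitting $\bigwedge(\C^n)=\bigwedge(V)\oplus e\bigwedge(V)$, together with the supercommutativity remark, handles the two-sided ideal correctly and gives an equivariant graded isomorphism $R_n^{(0,1)}\cong\bigwedge(V^{(n-1,1)})$. Second, the paper cites the exterior-power identity from \cite{GeckPfeiffer}, whereas you derive $\bigwedge^i V^{(n-1,1)}\cong V^{(n-i,1^i)}$ by induction from $h_{n-i}e_i=s_{(n-i,1^i)}+s_{(n-i+1,1^{i-1})}$. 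This makes your argument self-contained, including the boundary cases $i=0$ and $\bigwedge^n V=0$.
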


We prove the following type $B$ analogue; it is an easy extension of \cite[Lemma 4.10]{HaglundSergel} to type $B$.\footnote{Proposition~\ref{prop:one-set} follows as a corollary from Theorem~\ref{thm:explicit-0-2} (set $v=0$, which forces $\lambda_2 = \ell = 0$ in that formula).}
\begin{proposition}\label{prop:one-set}
For $n \geq 1$,
    \begin{equation}
        \Frob(R_{\mathfrak{B}_n}^{(0,1)}; u) = \sum_{i=0}^{n} u^i s_{(n-i)}(\x)s_{(1^i)}(\y).
    \end{equation}
\end{proposition}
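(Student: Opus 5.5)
Since $R_{\mathfrak{B}_n}^{(0,1)} = \C[\theta_1,\ldots,\theta_n]$ is the full exterior algebra, the degree-$i$ component is $\bigwedge^i(\C^n)$, where $\C^n$ is the span of $\theta_1,\ldots,\theta_n$ carrying the defining action $g\cdot\theta_i = \epsilon_i\theta_{\sigma(i)}$. The plan is therefore to identify each graded piece as a representation of $\mathfrak{B}_n$ and then apply $F^B$ degree by degree.

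First, I will check that the span of $\theta_1,\ldots,\theta_n$ is the defining representation of $\mathfrak{B}_n$. The action on the $\theta_i$ is exactly by signed permutation matrices, so this is immediate, and by the background facts this representation is $V^{((n-1),(1))}$. It follows that the degree-$i$ component $(R_{\mathfrak{B}_n}^{(0,1)})_i = \bigwedge^i V^{((n-1),(1))}$. By \cite[Proposition 5.5.7]{GeckPfeiffer}, as recalled in Section~\ref{sec:background}, this is isomorphic to $V^{((n-i),(1^i))}$ for each $0\le i\le n$. Applying $F^B$ then gives $F^B\ch\big((R_{\mathfrak{B}_n}^{(0,1)})_i\big) = s_{(n-i)}(\x)s_{(1^i)}(\y)$. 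Summing against $u^i$ yields the stated formula; the case $i=n$ recovers the sign character $e_n(\y)$, consistent with the background.

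The main point needing care is the identification of the exterior powers, and this is simply a cited result. If I want a self-contained argument instead, I would compute $\bigwedge^i$ directly. The monomials $\theta_T$ with $|T|=i$ are permuted up to sign, so $\bigwedge^i$ is induced from the stabilizer $\mathfrak{B}_i\times\mathfrak{B}_{n-i}$ of the line $\C\theta_{[i]}$. On that line, $\mathfrak{B}_i$ acts by $\det$ (the signs $\epsilon_t$ together with the reordering sign) and $\mathfrak{B}_{n-i}$ acts trivially. Hence $\bigwedge^i \cong \Ind_{\mathfrak{B}_i\times\mathfrak{B}_{n-i}}^{\mathfrak{B}_n}(\varepsilon\boxtimes\mathbbm{1})$, and by the induction-product rule its Frobenius image is $e_i(\y)h_{n-i}(\x) = s_{(1^i)}(\y)s_{(n-i)}(\x)$.

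As a sanity check, pairing the result with \eqref{eq:to-pair} gives Hilbert series $\sum_i \binom{n}{i}u^i = (1+u)^n$. This matches the dimension $2^n$ of the basis $\{\theta_T\}$.
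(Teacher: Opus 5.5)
Your proposal is correct and is essentially the paper's proof: the paper likewise identifies $(R_{\mathfrak{B}_n}^{(0,1)})_i$ with $\wedge^i V^{((n-1),(1))} \cong V^{((n-i),(1^i))}$ and then applies $F^B$ degree by degree. Your optional self-contained computation is also sound. It realizes $\wedge^i V$ as $\Ind_{\mathfrak{B}_i\times\mathfrak{B}_{n-i}}^{\mathfrak{B}_n}(\varepsilon\boxtimes\mathbbm{1})$ via the permuted lines $\C\theta_T$, which is the same induced realization the paper uses later in the proof of Proposition~\ref{prop:typeB-wedges}.
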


\begin{proof}
Let $(R_{\mathfrak{B}_n}^{(0,1)})_{i}$ denote the degree $i$ homogeneous part of $R_{\mathfrak{B}_n}^{(0,1)}$. 
Since the defining ideal is zero, $R_{\mathfrak{B}_n}^{(0,1)} = \C[\theta_1,\ldots,\theta_n]$ is the full exterior algebra, so $(R_{\mathfrak{B}_n}^{(0,1)})_{i} = \wedge^i (R_{\mathfrak{B}_n}^{(0,1)})_{1}$.
The degree-one component is the span of $\theta_1, \ldots, \theta_n$, on which $\mathfrak{B}_n$ acts by signed permutations; this is the defining representation $V^{((n-1),(1))}$. 
Hence $(R_{\mathfrak{B}_n}^{(0,1)})_{i} \cong \wedge^i V^{((n-1),(1))} \cong V^{((n-i),(1^i))}$.
Summing over $i$ and applying the Frobenius characteristic map completes the proof.
\end{proof}

From the Frobenius series, we can determine the Hilbert series. 
(The Hilbert series also follows from the monomial basis.)

\begin{proposition}\label{prop:01-B_n-Hilb}
For $n \geq 1$,
        \begin{equation}
        \Hilb(R_{\mathfrak{B}_n}^{(0,1)}; u) = \sum_{i=0}^{n} u^i\binom{n}{i} = (1+u)^n.
    \end{equation}
Consequently, $\dim R_{\mathfrak{B}_n}^{(0,1)} = 2^n$.
\end{proposition}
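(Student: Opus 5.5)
The Hilbert series follows directly from Proposition~\ref{prop:one-set} by pairing the Frobenius series with the element in equation~\eqref{eq:to-pair}. By Proposition~\ref{prop:one-set}, the degree $u^i$ coefficient of $\Frob(R_{\mathfrak{B}_n}^{(0,1)};u)$ is the single term $s_{(n-i)}(\x)s_{(1^i)}(\y)$. This corresponds to the bipartition $(\lambda,\mu) = ((n-i),(1^i))$ with $|\lambda| = n-i$.

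Using $\dim V^{(\lambda,\mu)} = \binom{n}{|\lambda|} f^\lambda f^\mu$, we only need $f^{(n-i)} = 1$ and $f^{(1^i)} = 1$, since single-row and single-column shapes each have exactly one standard Young tableau. This gives
\[
\dim (R_{\mathfrak{B}_n}^{(0,1)})_i = \binom{n}{n-i} = \binom{n}{i}.
\]
Summing over $0 \le i \le n$ with weight $u^i$ gives $\sum_{i=0}^n \binom{n}{i} u^i$. The binomial theorem then yields $(1+u)^n$.

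Specializing to $u=1$ gives $\dim R_{\mathfrak{B}_n}^{(0,1)} = 2^n$. As a cross-check, the monomial basis $\{\theta_T\}$ has exactly $\binom{n}{i}$ elements of degree $i$, which gives the same count. The argument has no real obstacle; the only point needing care is using the dimension formula $\binom{n}{|\lambda|}f^\lambda f^\mu$ correctly, with $|\lambda| = n-i$.
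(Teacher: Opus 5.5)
Your proposal is correct and matches the paper's argument. The paper pairs the Frobenius series of Proposition~\ref{prop:one-set} with equation~\eqref{eq:to-pair}, which is the same as your dimension computation $\binom{n}{|\lambda|}f^\lambda f^\mu$ with $f^{(n-i)} = f^{(1^i)} = 1$; the paper also notes the monomial-basis check in passing.
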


\begin{proof}
    Take the inner product of equation~\eqref{eq:to-pair} with the Frobenius series of Proposition~\ref{prop:one-set}, which yields
    \begin{equation}
        \sum_{i=0}^n u^i \binom{n}{i} \langle s_{(n-i)}(\x), h_{(1^{n-i})}(\x)\rangle \langle s_{(1^i)}(\y), h_{(1^i)}(\y)\rangle = \sum_{i=0}^n u^i \binom{n}{i}.
    \end{equation}
\end{proof}

\section{The coinvariant ring \texorpdfstring{$R_{\mathfrak{B}_n}^{(0,2)}$}{RBn(0,2)}}\label{sec:02}
Kim--Rhoades gave a formula for the module structure of $R_{\mathfrak{B}_n}^{(0,2)}$, established a combinatorial formula for the Hilbert series, determined the dimension, and found a monomial basis.
In this section, we extend their work by giving an explicit and manifestly positive Frobenius series of $R_{\mathfrak{B}_n}^{(0,2)}$, and consequently, for the $\GL_2 \times \mathfrak{B}_n$-module structure of $R_{\mathfrak{B}_n}^{(0,2)}$.
We give a new proof of their modified Motzkin path formula for the Hilbert series, and record an equivalent Hilbert series in terms of Kostka numbers.

For any irreducible complex reflection group $W$, Kim--Rhoades computed the $(i,j)$-graded piece of $R_W^{(0,2)}$ as follows.
\begin{theorem}[{\cite[Theorem 4.2]{MR4381935}}]\label{thm:KR}
Let $n \geq 1$.
Let $W$ be an irreducible complex reflection group acting on its reflection representation $V \cong \C^n$ and let $0 \leq i,j \leq n$.
If $i+j > n$, we have $(R_W^{(0,2)})_{i,j} = 0$. If $i+j \leq n$, inside the Grothendieck group of $W$ we have
\begin{equation}
    [(R_W^{(0,2)})_{i,j}] = [\wedge^i V] \cdot [\wedge^j V^*] - [\wedge^{i-1} V] \cdot [\wedge^{j-1} V^*].
\end{equation}
By convention, $\wedge^{-1} V = 0$.
\end{theorem}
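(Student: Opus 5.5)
Since this is the Kim--Rhoades theorem, stated here for a general irreducible complex reflection group $W$, I would follow their approach via the harmonic space, reading the question as ``why the $(i,j)$-piece is $\wedge^iV\otimes\wedge^jV^*$ minus a shifted copy.'' Set $A=\wedge V\otimes\wedge V^*$, realized as $\C[\bm\theta,\bm\xi]$ with $\bm\theta$ spanning $V^*$-coordinates and $\bm\xi$ spanning $V$-coordinates as appropriate. The key invariant is the ``diagonal'' element $\delta=\sum_i \theta_i\xi_i$, which is the identity map $V\to V$ viewed in $V^*\otimes V$. So $\delta$ is $\GL(V)$-invariant, and in particular $W$-invariant, of bidegree $(1,1)$. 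Multiplication by $\delta$ gives a $W$-equivariant map $A_{i-1,j-1}\to A_{i,j}$.

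\textbf{Step 1 (upper bound).} I would first show that for an \emph{irreducible} reflection group, the ideal generated by $A^W_+$ equals the ideal $\langle\delta\rangle$ plus whatever is generated in bidegrees $(i,j)$ with $i+j>n$. The natural route is to compute the $W$-invariants of $\wedge^iV\otimes\wedge^jV^*=\operatorname{Hom}(\wedge^jV,\wedge^iV)$. By Solomon's/Steinberg's theorem the $\wedge^kV$ are irreducible and pairwise non-isomorphic, so the invariants vanish unless $i=j$, and for $i=j$ they are one-dimensional, spanned by the identity. That identity is a multiple of $\delta^i$. Hence $A^W_+$ is spanned by $\delta,\delta^2,\dots$, and $I=\langle\delta\rangle$.

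\textbf{Step 2 (structure of $A/\langle\delta\rangle$).} Next I would use the $\mathfrak{sl}_2$ (Lefschetz) structure. On $A\cong\wedge(V\oplus V^*)$, multiplication by $\delta$ and the contraction $\partial=\sum\partial_{\theta_i}\partial_{\xi_i}$ generate a $\GL(V)$-equivariant $\mathfrak{sl}_2$-action with $h$ acting on $A_{i,j}$ by $i+j-n$. This is the standard hard Lefschetz/Howe duality for the exterior algebra (a $(\GL(V),\mathfrak{sl}_2)$ skew Howe duality). Consequently $\delta\colon A_{i-1,j-1}\to A_{i,j}$ is injective whenever $(i-1)+(j-1)-n<-(i+j-n)$, i.e.\ $i+j\le n$, and surjective when $i+j>n$, and it commutes with $W$.

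\textbf{Step 3 (conclusion).} The bidegree-$(i,j)$ piece of $\langle\delta\rangle$ is $\delta A_{i-1,j-1}$. For $i+j>n$, surjectivity gives $(R_W^{(0,2)})_{i,j}=0$. For $i+j\le n$, injectivity gives the Grothendieck group identity $[A_{i,j}]-[A_{i-1,j-1}]$, which is the claimed formula.

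The main obstacle is the $\mathfrak{sl}_2$ injectivity and surjectivity in Step 2. Tracking the signs from anticommuting variables while verifying $[\partial,\delta]=h$ is routine but fiddly. Note that Step 1 is where irreducibility of $W$ is essential.
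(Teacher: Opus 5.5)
Your proposal is correct and follows the same route as the Kim--Rhoades argument that the paper cites (the paper itself gives no proof): Steinberg's theorem on the $\wedge^k V$ shows the invariant ideal is $\langle\delta\rangle$ with $\delta=\sum_i\theta_i\xi_i$, and the Lefschetz property of multiplication by $\delta$ on $\wedge(V\oplus V^*)$ gives injectivity of $A_{i-1,j-1}\to A_{i,j}$ for $i+j\le n$ and surjectivity for $i+j>n$. Two points are cosmetic. Your injectivity criterion is oddly phrased; the clean condition is that the source $\mathfrak{sl}_2$-weight $i+j-n-2$ is negative, which even gives injectivity for $i+j\le n+1$. You should also note that $\delta$ preserves $\deg_\theta-\deg_\xi$, so the statements about total-degree weight spaces refine to the individual bidegrees.
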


In the case of the symmetric group $\mathfrak{S}_n$, they gave a formula for the bigraded Frobenius series of $R_n^{(0,2)}$, using the Kronecker product of symmetric functions. 
Given partitions $\lambda, \mu, \nu \vdash n$, the \emph{Kronecker coefficient} $g(\lambda, \mu, \nu)$ is defined as the multiplicity of the $\mathfrak{S}_n$ irreducible representation $V^\nu$ inside $V^\lambda \otimes V^\mu$. 
Then define the \emph{Kronecker product} $s_\lambda * s_\mu = \sum_{\nu \vdash n} g(\lambda, \mu, \nu) s_\nu$.

\begin{theorem}[\cite{MR4381935}]\label{thm:KR-A02}
For $n\geq 1$,
    \begin{equation}
        \Frob(R_n^{(0,2)};u,v) = \sum_{0 \leq i+j \leq n-1} u^i v^j \left(s_{(n-i,1^i)}*s_{(n-j,1^j)} - s_{(n-i+1,1^{i-1})}*s_{(n-j+1,1^{j-1})} \right)(\z),
    \end{equation}
    where the Schur function of a non-partition is $0$.
\end{theorem}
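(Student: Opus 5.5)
We derive this from Theorem~\ref{thm:KR} with $W=\mathfrak{S}_n$. The reflection representation here is $V = V^{(n-1,1)}\cong\C^{n-1}$, not the defining representation $\C^n$, so we first compare $R_n^{(0,2)}$ with $R_{\mathfrak{S}_n}^{(0,2)}$ built on $V$.

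\emph{Reduction to $V$.} Write the defining representation as $\C^n = V\oplus \mathbbm{1}$, where $\mathbbm{1}$ is spanned by $e_1+\cdots+e_n$. Choose variables adapted to this splitting. In each fermionic set, take $\bar\theta = \theta_1+\cdots+\theta_n$ and $\bar\xi=\xi_1+\cdots+\xi_n$ for the trivial summands, and a basis of the $V$-summands (for example $\theta_i-\frac1n\bar\theta$ and $\xi_i - \frac1n\bar\xi$). This gives an $\mathfrak{S}_n$-equivariant, bigraded algebra isomorphism
\begin{equation*}
\C[\bm\theta,\bm\xi]\;\cong\;{\textstyle\bigwedge}(V\oplus V)\otimes{\textstyle\bigwedge}(\C\bar\theta\oplus\C\bar\xi).
\end{equation*}
The group acts trivially on the second factor. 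Hence the invariants are $\bigwedge(V\oplus V)^{\mathfrak{S}_n}\otimes \bigwedge(\C\bar\theta\oplus\C\bar\xi)$.

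It follows that the ideal generated by positive-degree invariants contains $\bar\theta$ and $\bar\xi$; indeed these are $p_{1,0}$ and $p_{0,1}$ from Proposition~\ref{prop:Weyl-bf}. Modulo $\bar\theta$ and $\bar\xi$, that ideal is generated by the positive-degree invariants of $\bigwedge(V\oplus V)$. Therefore $R_n^{(0,2)}\cong R_{W}^{(0,2)}$ as bigraded $\mathfrak{S}_n$-modules, where the right side is computed on $V$. This splitting of the trivial summand is the only point needing care; the remaining steps are bookkeeping.

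\emph{Applying Theorem~\ref{thm:KR}.} Since $\dim V = n-1$, the bigraded piece $(R_n^{(0,2)})_{i,j}$ vanishes unless $i+j\le n-1$. In that range,
\begin{equation*}
[(R_n^{(0,2)})_{i,j}] = [\wedge^iV][\wedge^jV^*]-[\wedge^{i-1}V][\wedge^{j-1}V^*].
\end{equation*}
We use two classical facts. First, $\wedge^i V^{(n-1,1)}\cong V^{(n-i,1^i)}$ for $0\le i\le n-1$. Second, every $\mathfrak{S}_n$-module is self-dual, so $V^*\cong V$.

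Products in the Grothendieck group are tensor products of representations. Under the Frobenius characteristic $F$, the tensor product $V^\lambda\otimes V^\mu$ goes to the Kronecker product $s_\lambda * s_\mu$, by the definition of the Kronecker coefficients. Thus $F[(R_n^{(0,2)})_{i,j}] = s_{(n-i,1^i)}*s_{(n-j,1^j)}-s_{(n-i+1,1^{i-1})}*s_{(n-j+1,1^{j-1})}$.

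When $i=0$ or $j=0$, the convention $\wedge^{-1}V=0$ matches the stated convention that the Schur function of a non-partition is $0$. For example, $(n+1,1^{-1})$ is not a partition, so the second term drops out. Multiplying by $u^iv^j$ and summing over $0\le i+j\le n-1$ gives the formula.
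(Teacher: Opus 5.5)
Your proposal is correct. The paper gives no proof of its own and simply quotes the result from Kim--Rhoades, so the natural comparison is with how the formula follows from Theorem~\ref{thm:KR}. Your argument is exactly that derivation, and it is the type $A$ counterpart of how the paper obtains Proposition~\ref{prop:module-structure} from Theorem~\ref{thm:KR} and Proposition~\ref{prop:typeB-wedges}.

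The one extra step type $A$ needs is the one you isolate correctly: $\C^n$ is not the reflection representation of $\mathfrak{S}_n$. Splitting off the trivial summands $\bar\theta$ and $\bar\xi$, which are positive-degree invariants and hence lie in the ideal, identifies $R_n^{(0,2)}$ equivariantly and bigradedly with the ring built on $V^{(n-1,1)}\oplus V^{(n-1,1)}$. After that, three facts give the formula: $\wedge^i V^{(n-1,1)}\cong V^{(n-i,1^i)}$, self-duality, and the correspondence of tensor products with Kronecker products. The range $i+j\le n-1$ comes from $\dim V=n-1$.

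Two small points:
\begin{itemize}
\item Theorem~\ref{thm:KR} requires a reflection representation of positive dimension, so the case $n=1$ (where $V=0$) should be checked directly. This is immediate: $R_1^{(0,2)}=\C$ and the right-hand side is $s_{(1)}$.
\item The $n$ elements $\theta_i-\frac1n\bar\theta$ span the $V$-summand but are not a basis, since they sum to zero; any $n-1$ of them form one.
\end{itemize}
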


The only irreducible characters occurring with nonzero multiplicity are those indexed by double hooks.
The following result gives explicit formulas.

\begin{proposition}[\cite{MR4381935,lentfer2026signcharactertriagonalfermionic}]\label{prop:02-all-characters}
The multiplicity of the sign character is
\begin{equation}
\langle \Frob(R_n^{(0,2)};u,v),s_{(1^n)}(\z) \rangle = [n]_{u,v}.
\end{equation}
For $0 \leq i \leq n-2$, the multiplicities of the trivial and hook characters are
\begin{equation}
\langle \Frob(R_n^{(0,2)};u,v),s_{(n-i,1^i)}(\z) \rangle = [i+1]_{u,v} + uv[i]_{u,v}.
\end{equation}
Let $\lambda \vdash n$ be a double hook, i.e., 
\begin{equation}
    \lambda = (\lambda_1, \lambda_2, 2^\ell, 1^{n-2\ell-\lambda_1-\lambda_2})
\end{equation} 
where $\lambda_1 \geq \lambda_2 \geq 2$.
Let $m := n - 2\ell - \lambda_1 - \lambda_2 + 2$. Then if $\lambda_1 = \lambda_2$, we have
\begin{equation}
\langle \Frob(R_n^{(0,2)};u,v),s_{\lambda}(\z) \rangle = (uv)^{\ell+\lambda_2-1} (uv[m-2]_{u,v} + [m-1]_{u,v} + [m]_{u,v}).
\end{equation}
If $\lambda_1 > \lambda_2$, we have
\begin{equation}
\langle \Frob(R_n^{(0,2)};u,v),s_{\lambda}(\z) \rangle = (uv)^{\ell+\lambda_2-1} (uv[m-2]_{u,v} + (uv+1)[m-1]_{u,v} + [m]_{u,v}).
\end{equation}
\end{proposition}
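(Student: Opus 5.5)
Starting from Theorem~\ref{thm:KR-A02}, the multiplicity of $s_\lambda$ in the degree-$(i,j)$ piece is
\[
g\big((n-i,1^i),(n-j,1^j),\lambda\big) - g\big((n-i+1,1^{i-1}),(n-j+1,1^{j-1}),\lambda\big).
\]
Here $g$ denotes the Kronecker coefficient, with the convention that terms involving non-partitions vanish. So the plan is to evaluate the Kronecker product of two hooks explicitly and then take the difference.

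For the Kronecker product of two hooks I will use the standard identity $V^{(n-i,1^i)} = \wedge^i V^{(n-1,1)}$, together with $\wedge^i\C^n = \wedge^i V^{(n-1,1)} \oplus \wedge^{i-1}V^{(n-1,1)}$. Write $A_i := [\wedge^i \C^n]$, where $\C^n$ is the permutation representation. Then
\[
[\wedge^i V^{(n-1,1)}] = \sum_{a\ge 0} (-1)^a A_{i-a},
\]
and the $(i,j)$ term becomes an alternating sum of products $A_{i'}A_{j'}$. The product $A_{i'}\otimes A_{j'}$ is the permutation-type module
\[
\Ind_{\mathfrak{S}_{i'}\times\mathfrak{S}_{n-i'}}^{\mathfrak{S}_n}(\sgn\boxtimes 1)\otimes \Ind_{\mathfrak{S}_{j'}\times\mathfrak{S}_{n-j'}}^{\mathfrak{S}_n}(\sgn\boxtimes 1).
\]
By Mackey's formula this decomposes over the intersection sizes $c$ as induced modules from Young subgroups $\mathfrak{S}_c\times\mathfrak{S}_{i'-c}\times\mathfrak{S}_{j'-c}\times\mathfrak{S}_{n-i'-j'+c}$, with characters $1\boxtimes\sgn\boxtimes\sgn\boxtimes 1$. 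Their Frobenius images are $h_c\, e_{i'-c}\, e_{j'-c}\, h_{n-i'-j'+c}$. By the Pieri rule these products are supported on double hooks, which confirms the support claim, and the coefficient of $s_\lambda$ is an explicit count of ways to add horizontal and vertical strips.

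Rather than pushing the full alternating sum through, I would use the known closed form for Kronecker products of two hooks (Remmel, or Rosas), which lists the double hooks appearing in $s_{(n-i,1^i)}*s_{(n-j,1^j)}$ with multiplicities $0$, $1$ or $2$. Subtracting the shifted product then gives, for each $\lambda$, the set of pairs $(i,j)$ where the difference is $1$, possibly a doubled diagonal. The proof then proceeds by the three cases of the statement.

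\begin{itemize}
\item For $\lambda=(1^n)$: $g(\text{hook},\text{hook},(1^n))=\delta$ with conjugate hooks. The difference is nonzero exactly when $i+j=n-1$, which gives $[n]_{u,v}$.
\item For hooks $(n-i,1^i)$: contributions come from $(a,b)$ with $a+b=i$, giving $[i+1]_{u,v}$, and from $a+b=i+1$ with both $a,b\ge 1$, giving $uv[i]_{u,v}$.
\item For genuine double hooks with parameters $\ell,\lambda_2,m$: the contributing $(i,j)$ satisfy $i,j\ge \ell+\lambda_2-1$, which accounts for the factor $(uv)^{\ell+\lambda_2-1}$. After factoring this out, the remaining pairs lie on the antidiagonals $i+j=m-3$, $m-2$, $m-1$ (shifted), giving $uv[m-2]+[m-1]+[m]$. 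An extra $uv[m-1]$ appears when $\lambda_1>\lambda_2$, because the Remmel coefficient equals $2$ on that diagonal exactly in that case and drops to $1$ when $\lambda_1=\lambda_2$.
\end{itemize}

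The main obstacle is the bookkeeping in the double-hook case. The difference of two Kronecker coefficients must be shown to be nonnegative and equal to the indicator of these three antidiagonals, with the $\lambda_1=\lambda_2$ degeneracy handled correctly. I would first verify the answer against small $n$ (say $n\le 5$) by direct Pieri computation. I would then organize the proof by fixing $\lambda$ and tabulating $g((n-i,1^i),(n-j,1^j),\lambda)$ as a function of $(i,j)$: it is a union of a few antidiagonal segments. Subtracting its translate by $(1,1)$ should leave only the boundary segments, and the telescoping gives the stated sums of $u,v$-integers.
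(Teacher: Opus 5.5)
The paper does not reprove this proposition (it is quoted from Kim--Rhoades and Lentfer), so I judge your argument on its own. It has a genuine gap.

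The only substantive step is evaluating $g\big((n-i,1^i),(n-j,1^j),\lambda\big)$ from Remmel's or Rosas's formula and subtracting its $(1,1)$-translate. You defer that step to ``bookkeeping'', and your hook and sign bullets only restate the target. Worse, the outcome you say the bookkeeping must produce is false, so the plan cannot be completed as written. After factoring out $(uv)^{\ell+\lambda_2-1}$, the terms $[m]_{u,v}$ and $uv[m-2]_{u,v}$ both have total degree $m-1$. So they lie on the \emph{same} antidiagonal $i+j=n-\lambda_1+\lambda_2-1$, not on different ones, and they overlap at every point with $i,j\geq \ell+\lambda_2$. Hence the difference of Kronecker coefficients is not an indicator function: it equals $2$ there as soon as $m\geq 3$. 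For example, take $n=5$ and $\lambda=(2,2,1)$, so $\ell=0$ and $m=3$. The multiplicity is $uv(u^2+2uv+v^2+u+v)$, and indeed $g\big((3,1,1),(3,1,1),(2,2,1)\big)=2$ while $g\big((4,1),(4,1),(2,2,1)\big)=0$. What you actually need to show is the following:
\begin{itemize}
\item on $i+j=n-\lambda_1+\lambda_2-2$, the difference is $1$ for $i,j\geq \ell+\lambda_2-1$;
\item on $i+j=n-\lambda_1+\lambda_2-1$, it is $2$ for $i,j\geq\ell+\lambda_2$ and $1$ at the two endpoints;
\item on $i+j=n-\lambda_1+\lambda_2$, it is $1$ for $i,j\geq \ell+\lambda_2$, when that diagonal is in range;
\item it is $0$ elsewhere.
\end{itemize}

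Your explanation of the $\lambda_1=\lambda_2$ versus $\lambda_1>\lambda_2$ split also misidentifies the mechanism. The extra term $(uv)^{\ell+\lambda_2}[m-1]_{u,v}$ sits on $i+j=n-\lambda_1+\lambda_2$. That diagonal lies in the summation range $i+j\leq n-1$ of Theorem~\ref{thm:KR-A02} exactly when $\lambda_1>\lambda_2$. When $\lambda_1=\lambda_2$ it falls on $i+j=n$, which is never summed. (There the difference vanishes identically anyway, since $\wedge^{i-1}V\otimes\wedge^{j-1}V\to\wedge^{i}V\otimes\wedge^{j}V$ is a Lefschetz isomorphism when $i+j=\dim V+1$.) So whether some Kronecker coefficient equals $1$ or $2$ on that diagonal is beside the point.

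The same truncation separates the sign formula $[n]_{u,v}$ from the hook formula $[i+1]_{u,v}+uv[i]_{u,v}$ evaluated at $i=n-1$. The range restriction, which your plan never invokes, is what produces both case splits. Compare the proof of Theorem~\ref{thm:explicit-0-2}, where the analogous boundary term is discarded because it forces $i+j>n$.

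To close the gap:
\begin{enumerate}
\item write down the hook--hook Kronecker formula explicitly;
\item for each double hook $\lambda$, compute $g\big((n-i,1^i),(n-j,1^j),\lambda\big)-g\big((n-i+1,1^{i-1}),(n-j+1,1^{j-1}),\lambda\big)$ as a function of $(i,j)$;
\item restrict to $i+j\leq n-1$.
\end{enumerate}
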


Our goal is to get equally concrete results for the type $B$ coinvariant ring.
We want a description of $R_{\mathfrak{B}_n}^{(0,2)}$ which is manifestly positive in terms of its decomposition into irreducible representations. 
We will use the Mackey tensor product formula.
\begin{theorem}[{\cite{MR974302}; for this version, see \cite[(44.3)]{CurtisReiner}}]\label{thm:mackey-tensor}
Let $G$ be a finite group with subgroups $H$ and $K$, and let $T \subseteq G$ be a set of representatives for the double cosets $H \backslash G / K$, so that $G = \bigsqcup_{g \in T} HgK$. 
Let $L$ be a representation of $H$ and $N$ a
representation of $K$. 
For $g \in T$, let ${}^{g}N$ denote the representation of $gKg^{-1}$ with ${}^{g}N(x) = N(g^{-1} x g)$ for $x \in gKg^{-1}$. 
Then
\begin{equation}
    \Ind_H^G L \otimes \Ind_K^G N \cong \bigoplus_{g \in T}
    \Ind_{H \cap gKg^{-1}}^G \left( \Res_{H \cap gKg^{-1}}^H L \otimes
    \Res_{H \cap gKg^{-1}}^{gKg^{-1}} {}^{g}N \right).
\end{equation}
\end{theorem}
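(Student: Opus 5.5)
The plan is to derive this version of Mackey's formula from two standard facts: the projection formula (push--pull) and Mackey's restriction formula. First, I establish the projection formula: for a subgroup $H \leq G$, an $H$-module $L$ and a $G$-module $M$,
\begin{equation}
    \Ind_H^G L \otimes M \cong \Ind_H^G\left(L \otimes \Res^G_H M\right).
\end{equation}
I would realize $\Ind_H^G L$ as $\C[G] \otimes_{\C[H]} L$. The map $(g \otimes \ell) \otimes m \mapsto g \otimes (\ell \otimes g^{-1} m)$ is well defined, since replacing $g$ by $gh$ and $\ell$ by $h^{-1}\ell$ gives the same image. It is $G$-equivariant, and it has the evident inverse $g \otimes (\ell \otimes m) \mapsto (g \otimes \ell) \otimes gm$. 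Alternatively, one can compare characters using the Frobenius formula for induced characters.

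Applying this with $M = \Ind_K^G N$ reduces the problem to understanding $\Res^G_H \Ind_K^G N$. Here I would invoke (and, if needed, reprove) Mackey's restriction formula:
\begin{equation}
    \Res^G_H \Ind_K^G N \cong \bigoplus_{g \in T} \Ind^H_{H \cap gKg^{-1}} \Res^{gKg^{-1}}_{H \cap gKg^{-1}} {}^{g}N .
\end{equation}
To prove it, I decompose $\Ind_K^G N = \bigoplus_{xK \in G/K} xN$ and group the cosets $xK$ according to their $H$-orbits. These orbits are exactly the double cosets $HgK$ with $g \in T$. For each $g$, the subspace $\bigoplus_{xK \subseteq HgK} xN$ is $H$-stable. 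The stabilizer of the coset $gK$ in $H$ is $H \cap gKg^{-1}$, and this subgroup acts on $gN$ through $x \mapsto g^{-1}xg \in K$, which is precisely the module ${}^gN$. Since $H$ permutes the summands $xN$ of this block transitively, the block is induced from that stabilizer.

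Substituting gives
\begin{equation}
    \Ind_H^G L \otimes \Ind_K^G N \cong \bigoplus_{g\in T} \Ind_H^G\left(L \otimes \Ind^H_{H\cap gKg^{-1}} \Res\,{}^gN\right).
\end{equation}
I would then apply the projection formula a second time, now inside $H$ with the subgroup $H\cap gKg^{-1}$ and the $H$-module $L$. This turns each summand into
\begin{equation}
    \Ind_H^G \Ind^H_{H\cap gKg^{-1}}\left(\Res^H_{H\cap gKg^{-1}} L \otimes \Res^{gKg^{-1}}_{H \cap gKg^{-1}} {}^gN\right).
\end{equation}
Transitivity of induction then yields the stated formula.

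The main obstacle is the bookkeeping in Mackey's restriction formula. One must check that the action of the stabilizer on $gN$ is indeed twisted by conjugation, giving ${}^gN$ rather than $N$ itself. One must also check that the decomposition does not depend on the choice of double coset representatives, which holds because conjugate data give isomorphic induced modules. If the explicit module maps become cumbersome, I would fall back on a character computation: evaluate both sides with the induced character formula and reindex the sum over $G$ through the double coset decomposition.
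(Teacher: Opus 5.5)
Your proof is correct. The projection-formula map is well defined because $(g,\ell)\mapsto g\otimes(\ell\otimes g^{-1}m)$ is $\C[H]$-balanced. The stabilizer of $gK$ in $H$ is $H\cap gKg^{-1}$, and it acts on $gN$ through $x\mapsto g^{-1}xg$, so each block really is $\Ind^H_{H\cap gKg^{-1}}\Res\,{}^gN$. The second projection formula inside $H$, followed by transitivity of induction, then gives exactly the stated right-hand side. Your concern about independence of the choice of representatives is unnecessary, since $T$ is fixed in the statement.

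The paper does not prove this theorem. It imports it from Mackey and Curtis--Reiner and only applies it, in Proposition~\ref{prop:typeB-wedges}, to the Young-type subgroups $\mathfrak{B}_{n-i}\times\mathfrak{B}_i$ and $\mathfrak{B}_{n-j}\times\mathfrak{B}_j$. The classical argument going back to Mackey is organized differently. One identifies $\Ind_H^G L\otimes\Ind_K^G N$ with the restriction, to the diagonal $\Delta G\subseteq G\times G$, of $\Ind_{H\times K}^{G\times G}(L\boxtimes N)$. One then applies the subgroup (restriction) theorem once, using two facts:
\begin{itemize}
\item $(a,b)\mapsto a^{-1}b$ identifies $\Delta G\backslash(G\times G)/(H\times K)$ with $H\backslash G/K$;
\item $\Delta G\cap(1,g)(H\times K)(1,g)^{-1}$ is the diagonal copy of $H\cap gKg^{-1}$, on which the conjugated module restricts to $\Res L\otimes\Res\,{}^gN$.
\end{itemize}
That route uses a single structural theorem but requires the detour through $G\times G$. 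Yours stays inside $G$ and replaces the detour with two applications of the elementary projection formula, which makes it transparent where the restriction of $L$ and the twist ${}^gN$ come from.
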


Define $\CT(n,i,j)$ to be the set of all $2 \times 2$ \emph{contingency tables with margins $(n-i,i)$ and $(n-j,j)$} \cite{Pearson}. 
These are nonnegative integer matrices 
\begin{equation}
    M = \begin{bmatrix}
    M_{11} & M_{12} \\
    M_{21} & M_{22}
\end{bmatrix}
\end{equation} which satisfy $M_{11} + M_{12} = n-i$, $M_{21} + M_{22} = i$, $M_{11} + M_{21} = n-j$, and $M_{12} + M_{22} = j$.
By convention, $\CT(n,i,j)=\varnothing$ if $i<0$ or $j<0$.

\begin{example}\label{ex:contingency-tables}
    The matrices in $\CT(3,2,1)$ are 
    \begin{equation}
        \begin{bmatrix}
            1 & 0 \\ 1 & 1
        \end{bmatrix}
        \quad \text{ and } \quad 
                \begin{bmatrix}
            0 & 1 \\ 2 & 0
        \end{bmatrix} .
    \end{equation}
    The matrix in $\CT(3,1,0)$ is 
        \begin{equation}
        \begin{bmatrix}
            2 & 0 \\ 1 & 0
        \end{bmatrix}.
    \end{equation}
\end{example}

The following is a joint result with C. Ryba.

\begin{proposition}\label{prop:typeB-wedges}
Let $0 \leq i,j \leq n$. Then
\begin{equation}\label{eq:mackey-module}
    \wedge^i V \otimes \wedge^j V^* \cong \bigoplus_{M \in \CT(n,i,j)} \Ind_{\mathfrak{B}_{M_{11}} \times \mathfrak{B}_{M_{12}} \times \mathfrak{B}_{M_{21}} \times \mathfrak{B}_{M_{22}}}^{\mathfrak{B}_n}\left(\mathbbm{1}_{\mathfrak{B}_{M_{11}}} \boxtimes \varepsilon_{\mathfrak{B}_{M_{12}}} \boxtimes \varepsilon_{\mathfrak{B}_{M_{21}}} \boxtimes \mathbbm{1}_{\mathfrak{B}_{M_{22}}}\right),
\end{equation}
and consequently the Frobenius image of $[\wedge^i V] \cdot [\wedge^j V^*]$ is
\begin{equation}\label{eq:mackey-two-wedges}
    \sum_{M \in \CT(n,i,j)} h_{M_{11}}(\x)h_{M_{22}}(\x) e_{M_{12}}(\y) e_{M_{21}}(\y).
\end{equation}
\end{proposition}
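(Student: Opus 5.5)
The plan is to realize each exterior power as a monomially induced representation and then apply the Mackey tensor product formula (Theorem~\ref{thm:mackey-tensor}). Since every irreducible $\mathfrak{B}_n$-module is self-dual, $V^* \cong V$, so it suffices to treat $\wedge^i V \otimes \wedge^j V$. In particular, $\wedge^j V^*$ is then the same induced module as $\wedge^j V$.

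\textbf{Step 1: $\wedge^i V$ as an induced module.} I will show that
\[
\wedge^i V \cong \Ind_{\mathfrak{B}_{n-i}\times \mathfrak{B}_i}^{\mathfrak{B}_n}(\mathbbm{1}_{\mathfrak{B}_{n-i}} \boxtimes \varepsilon_{\mathfrak{B}_i}).
\]
Take the basis $\theta_T$, $|T| = i$, as in Section~\ref{sec:01}. A signed permutation $g = (\epsilon,\sigma)$ sends $\theta_T$ to $\pm\theta_{\sigma(T)}$. Hence $\mathfrak{B}_n$ permutes the lines $\C\theta_T$ transitively, and $\wedge^i V$ is induced from the stabilizer of one line.

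Take $T_0 = \{n-i+1,\ldots,n\}$. Its stabilizer is $\mathfrak{B}_{n-i}\times\mathfrak{B}_i$. This subgroup acts on $\theta_{T_0}$ trivially through the first factor. Through the second factor, an element acts by $\sgn(\sigma)\prod_{t\in T_0}\epsilon_t$, which is exactly the determinant character of $\mathfrak{B}_i$. The $i = 0$ and $i = n$ cases are consistent with this.

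\textbf{Step 2: double cosets.} Set $H = \mathfrak{B}_{n-i}\times\mathfrak{B}_i$ and $K = \mathfrak{B}_{n-j}\times\mathfrak{B}_j$. The sign-change subgroup $\{\pm1\}^n$ is normal in $\mathfrak{B}_n$ and contained in both $H$ and $K$. Therefore $H\backslash \mathfrak{B}_n/K$ is in bijection with $(\mathfrak{S}_{n-i}\times\mathfrak{S}_i)\backslash\mathfrak{S}_n/(\mathfrak{S}_{n-j}\times\mathfrak{S}_j)$. By the classical Young-subgroup description, these double cosets are indexed by $\CT(n,i,j)$: the entry $M_{ab}$ is the size of the intersection of the $a$-th block of $H$ with the $\sigma$-image of the $b$-th block of $K$.

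Choose an unsigned permutation $g$ as a representative for each $M$. Then $H\cap gKg^{-1} = \mathfrak{B}_{M_{11}}\times\mathfrak{B}_{M_{12}}\times\mathfrak{B}_{M_{21}}\times\mathfrak{B}_{M_{22}}$. Here rows correspond to the blocks of $H$ and columns to the blocks of $K$. I expect this bookkeeping, namely justifying the reduction to $\mathfrak{S}_n$ and identifying the intersection as a product of smaller hyperoctahedral groups, to be the main point requiring care. It is nonetheless standard.

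\textbf{Step 3: restrict, tensor, and apply Frobenius.} The determinant of $\mathfrak{B}_a$ restricted to $\mathfrak{B}_{a'}\times\mathfrak{B}_{a''}$ is $\varepsilon\boxtimes\varepsilon$, and conjugating by a permutation $g$ transports $\mathbbm{1}\boxtimes\varepsilon$ compatibly.

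Restricting $L = \mathbbm{1}\boxtimes\varepsilon$ gives
\[
\mathbbm{1}_{M_{11}}\boxtimes\mathbbm{1}_{M_{12}}\boxtimes\varepsilon_{M_{21}}\boxtimes\varepsilon_{M_{22}},
\]
and restricting ${}^gN$ gives
\[
\mathbbm{1}_{M_{11}}\boxtimes\varepsilon_{M_{12}}\boxtimes\mathbbm{1}_{M_{21}}\boxtimes\varepsilon_{M_{22}}.
\]
Their tensor product is $\mathbbm{1}\boxtimes\varepsilon\boxtimes\varepsilon\boxtimes\mathbbm{1}$, since $\varepsilon\otimes\varepsilon = \mathbbm{1}$. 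Theorem~\ref{thm:mackey-tensor} then yields \eqref{eq:mackey-module}.

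Finally, apply $F^B\ch$. Induction from a product of hyperoctahedral groups corresponds to multiplication of Frobenius images, and we have $F^B(\mathbbm{1}_{\mathfrak{B}_m}) = h_m(\x)$ and $F^B(\varepsilon_{\mathfrak{B}_m}) = e_m(\y)$. This gives \eqref{eq:mackey-two-wedges}.
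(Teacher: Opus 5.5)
Your proposal is correct and follows the paper's proof essentially step for step: the exterior powers are realized as $\Ind_{\mathfrak{B}_{n-i}\times\mathfrak{B}_i}^{\mathfrak{B}_n}(\mathbbm{1}\boxtimes\varepsilon)$, the Mackey tensor formula is applied, the double cosets are reduced via the normal subgroup $\{\pm1\}^n$ to Young-subgroup double cosets of $\mathfrak{S}_n$ indexed by $\CT(n,i,j)$, and restricting and tensoring gives $\mathbbm{1}\boxtimes\varepsilon\boxtimes\varepsilon\boxtimes\mathbbm{1}$. The only difference is in Step 1. There you argue that $\mathfrak{B}_n$ permutes the lines $\C\theta_T$ transitively, with line stabilizer $\mathfrak{B}_{n-i}\times\mathfrak{B}_i$ acting on $\theta_{T_0}$ by $\mathbbm{1}\boxtimes\varepsilon$; this is a more explicit justification of the identification that the paper only states as ``taking exterior powers gives.''
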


\begin{proof}
The defining representation $V \cong V^{((n-1),(1))}$ of $\mathfrak{B}_n$ may be realized as
\begin{equation}
    V \cong \Ind_{\mathfrak{B}_{n-1} \times \mathfrak{B}_1}^{\mathfrak{B}_n} ( \mathbbm{1}_{\mathfrak{B}_{n-1}} \boxtimes \varepsilon_{\mathfrak{B}_1}).
\end{equation}
Taking exterior powers gives, for $0 \leq i \leq n$,
\begin{equation}
    \wedge^i V \cong \Ind_{\mathfrak{B}_{n-i} \times \mathfrak{B}_i}^{\mathfrak{B}_n} ( \mathbbm{1}_{\mathfrak{B}_{n-i}} \boxtimes \varepsilon_{\mathfrak{B}_i}) \cong V^{((n-i),(1^i))}.
\end{equation}
Since $V \cong V^*$, we also have that 
\begin{equation}
    \wedge^j V^* \cong \Ind_{\mathfrak{B}_{n-j} \times \mathfrak{B}_j}^{\mathfrak{B}_n} ( \mathbbm{1}_{\mathfrak{B}_{n-j}} \boxtimes \varepsilon_{\mathfrak{B}_j}) \cong V^{((n-j),(1^j))}.
\end{equation}

Set $G  = \mathfrak{B}_n$, $H = \mathfrak{B}_{n-i}\times \mathfrak{B}_{i}$, and $K =  \mathfrak{B}_{n-j} \times \mathfrak{B}_{j}$.
Let
    \begin{equation}
        L = \mathbbm{1}_{\mathfrak{B}_{n-i}} \boxtimes \varepsilon_{\mathfrak{B}_i} \text{ and } N = \mathbbm{1}_{\mathfrak{B}_{n-j}} \boxtimes \varepsilon_{\mathfrak{B}_j},
    \end{equation}
so that $\wedge^i V \cong \Ind_H^G L$ and $\wedge^j V^* \cong \Ind_K^G N$.

We use the Mackey tensor product formula (Theorem~\ref{thm:mackey-tensor}).
Since $G  \cong C_2 \wr \mathfrak{S}_n$, $H \cong C_2 \wr \mathfrak{S}_{n-i}\times C_2 \wr\mathfrak{S}_{i}$, and $K \cong C_2 \wr  \mathfrak{S}_{n-j} \times C_2 \wr \mathfrak{S}_{j}$ each contain $C_2^n$, the projection $G \twoheadrightarrow \mathfrak{S}_n$ induces a bijection
\begin{equation}
    H\backslash G /K \longleftrightarrow (\mathfrak{S}_{n-i} \times \mathfrak{S}_i) \backslash \mathfrak{S}_n / (\mathfrak{S}_{n-j} \times \mathfrak{S}_j).
\end{equation}
The right hand side is indexed by the contingency tables $\CT(n,i,j)$:
explicitly, $g \in \mathfrak{S}_n$ is in the double coset indexed by
$M \in \CT(n,i,j)$ if and only if
\begin{equation}
    \left|\{1,\ldots, n-i\} \cap g(\{1,\ldots,n-j\})\right| = M_{11},
\end{equation}
\begin{equation}
    \left|\{1,\ldots, n-i\} \cap g(\{n-j+1,\ldots,n\})\right| = M_{12},
\end{equation}
\begin{equation}
    \left|\{n-i+1,\ldots,n\} \cap g(\{1,\ldots, n-j\})\right| = M_{21},
\end{equation}
\begin{equation}
    \left|\{n-i+1,\ldots,n\} \cap g(\{n-j+1,\ldots,n\})\right| = M_{22}.
\end{equation}
These cardinalities depend only on the double coset of $g$: left multiplication by $\mathfrak{S}_{n-i} \times \mathfrak{S}_i$ permutes each of the sets $\{1,\ldots,n-i\}$ and $\{n-i+1,\ldots,n\}$, while right multiplication by $\mathfrak{S}_{n-j} \times \mathfrak{S}_j$ permutes each of $\{1,\ldots,n-j\}$ and $\{n-j+1,\ldots,n\}$.
Conversely, given $M \in \CT(n,i,j)$, split $\{1,\ldots,n-j\}$ into sets of sizes $M_{11}$ and $M_{21}$ and $\{n-j+1,\ldots,n\}$ into sets of sizes $M_{12}$ and $M_{22}$, and let $g \in \mathfrak{S}_n$ carry the first and third into $\{1,\ldots,n-i\}$ and the second and fourth into $\{n-i+1,\ldots,n\}$; any two such choices differ by left multiplication by $\mathfrak{S}_{n-i} \times \mathfrak{S}_i$ and right multiplication by $\mathfrak{S}_{n-j} \times \mathfrak{S}_j$.
Hence the association is a bijection; choose for each $M$ a representative $g_M \in \mathfrak{S}_n$.

For each $M$, the corresponding intersection is the Young subgroup of $\mathfrak{B}_n$ given by
\begin{equation}
    H \cap g_M K g_M^{-1} = \mathfrak{B}_{M_{11}} \times \mathfrak{B}_{M_{12}} \times \mathfrak{B}_{M_{21}} \times \mathfrak{B}_{M_{22}},
\end{equation}
where the four factors act on index sets of sizes $M_{11}, M_{12}, M_{21}, M_{22}$.

Under the embedding $H \cap g_M K g_M^{-1} \hookrightarrow H$, the first two factors $\mathfrak{B}_{M_{11}} \times \mathfrak{B}_{M_{12}}$ land in the $\mathfrak{B}_{n-i}$ factor of $H$ (on which $L$ is the trivial representation $\mathbbm{1}_{\mathfrak{B}_{n-i}}$); the last two factors $\mathfrak{B}_{M_{21}} \times \mathfrak{B}_{M_{22}}$ land in the $\mathfrak{B}_{i}$ factor of $H$ (on which $L$ is the sign representation $\varepsilon_{\mathfrak{B}_{i}}$). 
Hence
\begin{equation}
    \Res_{H \cap g_M K g_M^{-1}}^H L \cong \mathbbm{1}_{\mathfrak{B}_{M_{11}}} \boxtimes \mathbbm{1}_{\mathfrak{B}_{M_{12}}} \boxtimes \varepsilon_{\mathfrak{B}_{M_{21}}} \boxtimes \varepsilon_{\mathfrak{B}_{M_{22}}}.
\end{equation}

Next consider the embedding $H \cap g_M K g_M^{-1} \hookrightarrow g_M K g_M^{-1}$. The factors $\mathfrak{B}_{M_{11}} \times \mathfrak{B}_{M_{21}}$ land in the $\mathfrak{B}_{n-j}$ factor of $K$ (on which $N$ is the trivial representation $\mathbbm{1}_{\mathfrak{B}_{n-j}}$); the factors $\mathfrak{B}_{M_{12}} \times \mathfrak{B}_{M_{22}}$ land in the $\mathfrak{B}_{j}$ factor of $K$ (on which $N$ is the sign representation $\varepsilon_{\mathfrak{B}_{j}}$). 
Hence
\begin{equation}
    \Res_{H \cap g_M K g_M^{-1}}^{g_M K g_M^{-1}} {}^{g_M}N \cong \mathbbm{1}_{\mathfrak{B}_{M_{11}}} \boxtimes \varepsilon_{\mathfrak{B}_{M_{12}}} \boxtimes \mathbbm{1}_{\mathfrak{B}_{M_{21}}} \boxtimes \varepsilon_{\mathfrak{B}_{M_{22}}}.
\end{equation}
Tensoring these restricted representations together and simplifying, we obtain
\begin{equation}
    \Res_{H \cap g_M K g_M^{-1}}^H L \otimes \Res_{H \cap g_M K g_M^{-1}}^{g_M K g_M^{-1}} {}^{g_M}N \cong \mathbbm{1}_{\mathfrak{B}_{M_{11}}} \boxtimes \varepsilon_{\mathfrak{B}_{M_{12}}} \boxtimes \varepsilon_{\mathfrak{B}_{M_{21}}} \boxtimes \mathbbm{1}_{\mathfrak{B}_{M_{22}}}.
\end{equation}

The Mackey tensor product formula gives us
\begin{equation}
    \wedge^i V \otimes \wedge^j V^* \cong \bigoplus_{M \in \CT(n,i,j)} \Ind_{\mathfrak{B}_{M_{11}} \times \mathfrak{B}_{M_{12}} \times \mathfrak{B}_{M_{21}} \times \mathfrak{B}_{M_{22}}}^{\mathfrak{B}_n} \left(\mathbbm{1}_{\mathfrak{B}_{M_{11}}} \boxtimes \varepsilon_{\mathfrak{B}_{M_{12}}} \boxtimes \varepsilon_{\mathfrak{B}_{M_{21}}} \boxtimes \mathbbm{1}_{\mathfrak{B}_{M_{22}}}\right).
\end{equation}
Upon applying the type $B$ Frobenius characteristic map, each matrix $M$ contributes
\begin{equation}
    h_{M_{11}}(\mathbf{x}) h_{M_{22}}(\mathbf{x}) e_{M_{12}}(\mathbf{y}) e_{M_{21}}(\mathbf{y}).
\end{equation}
Summing over all $M \in \CT(n,i,j)$ yields the claimed Frobenius image.
\end{proof}

\begin{corollary}\label{cor:wedges-mult-free}
    For $0 \leq i,j \leq n$, the $\mathfrak{B}_n$-module $\wedge^i V \otimes \wedge^j V^*$ is multiplicity-free, and the irreducibles occurring are indexed by bipartitions $(\lambda,\mu) \vdash n$ with $\lambda$ of at most two rows and $\mu$ of at most two columns.
\end{corollary}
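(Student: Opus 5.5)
The plan is to read off multiplicity-freeness directly from the Frobenius image \eqref{eq:mackey-two-wedges} in Proposition~\ref{prop:typeB-wedges}. That formula shows $F^B \ch(\wedge^i V \otimes \wedge^j V^*)$ is a sum of products $h_{M_{11}}(\x)h_{M_{22}}(\x)\,e_{M_{12}}(\y)e_{M_{21}}(\y)$. I will show two things: distinct tables $M$ contribute disjoint sets of Schur functions, and each single product expands with all coefficients in $\{0,1\}$.

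\textbf{Step 1: parametrize the tables.} A table $M \in \CT(n,i,j)$ is determined by the single entry $c := M_{22}$. The margin conditions force
\[
M_{21} = i-c,\qquad M_{12} = j-c,\qquad M_{11} = n-i-j+c,
\]
where $c$ ranges over the integers for which all four entries are nonnegative. The key observation is that the $\x$-degree of the term indexed by $c$ is $M_{11}+M_{22} = n-i-j+2c$. This is strictly increasing in $c$, so different tables produce terms $s_\lambda(\x)s_\mu(\y)$ with different values of $|\lambda|$. Hence no cancellation or overlap can occur between distinct $M$.

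\textbf{Step 2: expand a single term.} Fix $M$. By the Pieri rule,
\[
h_a h_b = \sum_{k=0}^{\min(a,b)} s_{(a+b-k,\,k)},
\]
a multiplicity-free sum of Schur functions indexed by partitions with at most two rows. Applying $\omega$, the product $e_p e_q$ is a multiplicity-free sum of $s_{\nu'}$, where $\nu'$ is the conjugate of a partition $\nu$ with at most two rows, i.e.\ partitions with at most two columns. Since the $\x$- and $\y$-factors are independent, the product
\[
h_{M_{11}}h_{M_{22}}(\x)\,e_{M_{12}}e_{M_{21}}(\y)
\]
is a sum of distinct basis elements $s_\lambda(\x)s_\mu(\y)$, each with coefficient one. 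Here $\lambda$ has at most two rows and $\mu$ has at most two columns.

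\textbf{Step 3: conclude.} Combining Steps 1 and 2, every $s_\lambda(\x)s_\mu(\y)$ appears with coefficient at most $1$ in $F^B\ch(\wedge^i V\otimes \wedge^j V^*)$. Since $F^B$ is an isometry sending irreducible characters to the orthonormal basis $\{s_\lambda(\x)s_\mu(\y)\}$, the module is multiplicity-free, and its constituents have the stated shapes.

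There is no real obstacle. The only point needing care is Step 1: checking that distinct $c$ give distinct $|\lambda|$, which rules out a coincidence between terms from different contingency tables.
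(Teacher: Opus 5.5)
Your proposal is correct and follows essentially the same route as the paper. The paper likewise uses the Pieri rule to get the $0/1$ coefficients and the two-row/two-column shapes. It then observes that $i+j$ and $|\lambda|$ determine $M_{11}$, so at most one table contributes to each $s_\lambda(\x)s_\mu(\y)$; this is exactly your observation that $|\lambda| = n-i-j+2M_{22}$ is strictly increasing in the table parameter.
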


\begin{proof}
    By equation~\eqref{eq:mackey-two-wedges} and the Pieri rule, $\langle h_a(\x)h_b(\x), s_\lambda(\x)\rangle$ vanishes unless $\ell(\lambda) \leq 2$ and $\langle e_c(\y)e_d(\y), s_\mu(\y)\rangle = \langle h_c (\y)h_d(\y), s_{\mu'}(\y)\rangle$ vanishes unless $\mu_1 \leq 2$; in either case the coefficient is $0$ or $1$. 
    Since $i+j$ and $|\lambda|$ determine $M_{11}$, at most one $M \in \CT(n,i,j)$ contributes to the coefficient of any given $s_\lambda(\x)s_\mu(\y)$.
\end{proof}

\begin{proposition}\label{prop:module-structure}
    The bigraded Frobenius character series of $R_{\mathfrak{B}_n}^{(0,2)}$ is
\begin{equation}
\begin{aligned}
\Frob(R_{\mathfrak{B}_n}^{(0,2)}; u, v) &= \sum_{0 \leq i+j \leq n} u^i v^j \Bigg( \sum_{M \in \CT(n,i,j)} h_{M_{11}}(\x)h_{M_{22}}(\x) e_{M_{12}}(\y) e_{M_{21}}(\y) \\
&\qquad- \sum_{M \in \CT(n,i-1,j-1)} h_{M_{11}}(\x)h_{M_{22}}(\x) e_{M_{12}}(\y) e_{M_{21}}(\y)\Bigg).
\end{aligned}
\end{equation}
\end{proposition}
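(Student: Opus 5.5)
This is a direct combination of Theorem~\ref{thm:KR} with Proposition~\ref{prop:typeB-wedges}. First, $\mathfrak{B}_n$ is an irreducible (real) reflection group acting on $V \cong \C^n$, the defining representation $V^{((n-1),(1))}$. The two sets of fermionic variables $\bm{\theta}, \bm{\xi}$ are each a copy of $V$ (or $V^*$ in Kim--Rhoades' convention). Since $V \cong V^*$ for $\mathfrak{B}_n$, there is no ambiguity about which copy is dualized. Hence Theorem~\ref{thm:KR} applies with $W = \mathfrak{B}_n$. It gives $(R_{\mathfrak{B}_n}^{(0,2)})_{i,j} = 0$ for $i+j>n$, and for $i+j \le n$
\[
[(R_{\mathfrak{B}_n}^{(0,2)})_{i,j}] = [\wedge^i V]\cdot[\wedge^j V^*] - [\wedge^{i-1} V]\cdot[\wedge^{j-1}V^*]
\]
in the Grothendieck group.

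Next, apply the isometry $F^B \ch$, which is additive on the Grothendieck group. Multiply by $u^iv^j$ and sum over $0 \le i+j \le n$; this yields $\Frob(R_{\mathfrak{B}_n}^{(0,2)};u,v)$. For the first term, Proposition~\ref{prop:typeB-wedges} gives the Frobenius image of $[\wedge^i V]\cdot[\wedge^j V^*]$ as the sum over $\CT(n,i,j)$ of $h_{M_{11}}(\x)h_{M_{22}}(\x)e_{M_{12}}(\y)e_{M_{21}}(\y)$. For the second term, apply the same proposition with $(i-1,j-1)$ in place of $(i,j)$.

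The only point needing care is the boundary bookkeeping. When $i=0$ or $j=0$, the convention $\wedge^{-1}V=0$ must match the convention $\CT(n,i-1,j-1)=\varnothing$ for negative indices. It does, so the subtracted sum vanishes exactly when it should. Also, for $i+j\le n$ we have $0\le i-1,j-1\le n$ whenever both are nonnegative, so Proposition~\ref{prop:typeB-wedges} is applicable in its stated range. With these conventions matched, the formula follows term by term, and no real obstacle is expected.
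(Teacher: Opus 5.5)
Your proposal is correct and matches the paper's proof, which simply combines Theorem~\ref{thm:KR} (applied with $W=\mathfrak{B}_n$, using $V\cong V^*$) with Proposition~\ref{prop:typeB-wedges} for both the $(i,j)$ and $(i-1,j-1)$ terms. Your check that the convention $\wedge^{-1}V=0$ matches $\CT(n,i,j)=\varnothing$ for negative indices is exactly the bookkeeping the paper leaves implicit.
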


\begin{proof}
    This follows from Theorem~\ref{thm:KR} and Proposition~\ref{prop:typeB-wedges}.
\end{proof}

We give an example at $n=3$.
\begin{example}\label{ex:n3-frobenius}
    Using the contingency tables from Example~\ref{ex:contingency-tables}, we compute that the coefficient of $ u^2 v $ in $\Frob(R_{\mathfrak{B}_3}^{(0,2)}; u, v)$ is
    \begin{equation}
\begin{aligned}
        &h_1(\x)h_1(\x)e_0(\y)e_1(\y) + h_0(\x)h_0(\x)e_1(\y)e_2(\y) - h_2(\x)h_0(\x)e_0(\y)e_1(\y)\\
        &= s_{(1,1)}(\x)s_{(1)}(\y) + s_{\varnothing}(\x)s_{(1,1,1)}(\y) + s_{\varnothing}(\x)s_{(2,1)}(\y).
    \end{aligned}
\end{equation}
    A similar calculation for all $0 \leq i+j \leq 3$ yields 
        \begin{equation}
\begin{aligned}
        \Frob&(R_{\mathfrak{B}_3}^{(0,2)}; u, v) =
        (u^3+u^2v+uv^2+v^3)s_{\varnothing}(\x) s_{(1, 1, 1)}(\y) 
        + (u^2v+uv^2)s_{\varnothing}(\x) s_{(2, 1)}(\y)\\ 
        &+ (u^2+uv+v^2)s_{(1)}(\x) s_{(1, 1)}(\y)
        + uvs_{(1)}(\x) s_{(2)}(\y) 
        + (u^2v+uv^2)s_{(1,1)}(\x) s_{(1)}(\y)\\ 
        &+ (u+v)s_{(2)}(\x) s_{(1)}(\y) + uvs_{(2,1)}(\x) s_{\varnothing}(\y) + s_{(3)}(\x) s_{\varnothing}(\y).
    \end{aligned}
\end{equation}
\end{example}

We are now able to give an explicit bigraded Frobenius series for $R_{\mathfrak{B}_n}^{(0,2)}$.
\begin{theorem}\label{thm:explicit-0-2}
Let $(\lambda,\mu) \vdash n$ be a bipartition such that $\lambda$ has at most two rows and $\mu$ has at most two columns.
Write $\lambda = (\lambda_1,\lambda_2)$ and $\mu = (2^\ell, 1^m)$, where $\ell$ is the number of rows of $\mu$ of length $2$ and $m = n - \lambda_1 - \lambda_2 - 2\ell$.
Then we have
\begin{equation}
\langle \Frob(R_{\mathfrak{B}_n}^{(0,2)};u,v),s_{\lambda}(\x) s_{\mu}(\y) \rangle = (uv)^{\ell+\lambda_2} [m+1]_{u,v} = s_{(n-\ell-\lambda_1, \ell+\lambda_2)}(u,v),
\end{equation}
and $\langle \Frob(R_{\mathfrak{B}_n}^{(0,2)};u,v),s_{\lambda}(\x) s_{\mu}(\y) \rangle  = 0$ for all other bipartitions.
Equivalently,
\begin{equation}
    \Frob(R_{\mathfrak{B}_n}^{(0,2)};u,v) = \sum_{\substack{(\lambda, \mu)\vdash n,\\ \lambda = (\lambda_1,\lambda_2),\\ \mu = (2^\ell, 1^{n-\lambda_1-\lambda_2-2\ell}) }} s_{(n-\ell-\lambda_1, \ell+\lambda_2)}(u,v)s_{\lambda}(\x) s_{\mu}(\y).
\end{equation}
\end{theorem}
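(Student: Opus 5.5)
We start from Proposition~\ref{prop:module-structure} and extract the coefficient of $s_\lambda(\x)s_\mu(\y)$ in the $u^iv^j$ term. By Corollary~\ref{cor:wedges-mult-free} and the Pieri rule, for $M\in\CT(n,i,j)$ the product $h_{M_{11}}h_{M_{22}}(\x)\,e_{M_{12}}e_{M_{21}}(\y)$ contains $s_\lambda(\x)s_\mu(\y)$, with multiplicity one, exactly when $\{M_{11},M_{22}\}$ are row lengths compatible with $\lambda=(\lambda_1,\lambda_2)$ and $\{M_{12},M_{21}\}$ are compatible with $\mu=(2^\ell,1^m)$. Concretely, $h_ah_b\ni s_\lambda$ iff $a+b=|\lambda|$ and $\min(a,b)\le\lambda_2$, i.e. $\lambda_2\le a\le\lambda_1$. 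Likewise $e_ce_d\ni s_\mu$ iff $c+d=|\mu|$ and $\ell\le c\le \ell+m$. So define $a=M_{11}$ and $c=M_{12}$; then $M_{22}=|\lambda|-a$, $M_{21}=|\mu|-c$, and the margins give $n-i=a+c$ and $n-j=a+|\mu|-c$. Thus the coefficient of $u^iv^j$ in $\langle[\wedge^iV][\wedge^jV^*],s_\lambda s_\mu\rangle$ is the indicator that the determined $(a,c)$ lies in the box $[\lambda_2,\lambda_1]\times[\ell,\ell+m]$. Hence
\[
A(u,v):=\sum_{i,j}u^iv^j\langle[\wedge^iV][\wedge^jV^*],s_\lambda s_\mu\rangle=\sum_{a=\lambda_2}^{\lambda_1}\sum_{c=\ell}^{\ell+m}u^{n-a-c}v^{n-a-|\mu|+c}.
\]
This factors as
\[
A(u,v)=\Big(\sum_{a=\lambda_2}^{\lambda_1}(uv)^{n-a}\Big)\Big(\sum_{c=\ell}^{\ell+m}u^{-c}v^{c-|\mu|}\Big).
\]
The second factor is a monomial times $[m+1]_{u,v}$, and the first is $(uv)^{n-\lambda_1}[\lambda_1-\lambda_2+1]_{uv}$, a geometric series in $uv$.

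The subtracted term in Proposition~\ref{prop:module-structure} is $uv\,A(u,v)$, restricted to the range $i+j\le n$. The difference then telescopes in the $uv$-geometric series, leaving only the $a=\lambda_1$ end, i.e. the factor $(uv)^{n-\lambda_1}$. Here we must check that the truncation $i+j\le n$ does no harm. By Theorem~\ref{thm:KR}, terms with $i+j>n$ vanish in the ring, but $A$ itself may have monomials with $i+j>n$. We will verify that the surviving terms $a=\lambda_1$ all have $i+j\le n$. The killed terms are those $a=\lambda_2$ terms of $A$ with $i+j>n$, together with those of $uvA$; these are exactly the ones that must cancel or be discarded consistently.

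Simplifying the surviving terms, $a=\lambda_1$ gives
\[
i+j=2n-2\lambda_1-|\mu|,
\]
whose exponents should come out to $u^{n-\lambda_1-c}v^{n-\lambda_1-|\mu|+c}$. Multiplying out should yield $(uv)^{\ell+\lambda_2}[m+1]_{u,v}$, which by \eqref{eq:two-row-schur} equals $s_{(n-\ell-\lambda_1,\ell+\lambda_2)}(u,v)$.

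The main obstacle is the bookkeeping of which end of the telescoping survives, given the cutoff $i+j\le n$. Checking the degree, $i+j=2n-2a-|\mu|$, which is $\le n$ iff $a\ge(n-|\mu|)/2=|\lambda|/2$. So the $i+j\le n$ condition retains precisely $a\ge\lceil|\lambda|/2\rceil$, and the correct telescoped sum is over $a$ from $\max(\lambda_2,\lceil|\lambda|/2\rceil)=\lceil|\lambda|/2\rceil$ to $\lambda_1$. The subtracted sum is the same thing shifted by one. The expected surviving endpoint is therefore the low end $a=\lceil|\lambda|/2\rceil$ rather than $a=\lambda_1$, and the statement's exponent $\ell+\lambda_2$ should be recovered from the low end combined with the monomial prefactor from the $c$-sum. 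This would be confirmed against Example~\ref{ex:n3-frobenius}, e.g. $(\lambda,\mu)=((1,1),(1))$ giving $u^2v+uv^2$.

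Finally, the vanishing for other bipartitions is immediate from Corollary~\ref{cor:wedges-mult-free}.
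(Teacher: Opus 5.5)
Your setup is the same as the paper's: you extract the coefficient from Proposition~\ref{prop:module-structure}, impose the two-row Kostka conditions $\lambda_2\le M_{11}\le\lambda_1$ and $\ell\le M_{12}\le\ell+m$, and telescope under $(i,j)\mapsto(i-1,j-1)$. One typo: your ``$\min(a,b)\le\lambda_2$'' should read $\ge$, though the ``i.e.'' that follows is correct. Your factorization is right and is a tidy repackaging of the paper's indicator $a_{i,j}$. Writing $a=\lambda_1-s$ and $c=\ell+t$, each term is $(uv)^{\ell+\lambda_2+s}u^{m-t}v^{t}$, so $A(u,v)=(uv)^{\ell+\lambda_2}[\lambda_1-\lambda_2+1]_{uv}[m+1]_{u,v}$. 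The genuine problem is your final paragraph. It reverses the correct conclusion of your middle paragraph, and the claim it makes is false.

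Truncation to $i+j\le n$ is linear, so you may telescope first and truncate afterwards:
\[
A-uvA=(uv)^{\ell+\lambda_2}[m+1]_{u,v}-(uv)^{\ell+\lambda_1+1}[m+1]_{u,v}.
\]
The first term has total degree $n-\lambda_1+\lambda_2\le n$. The second has total degree $n+\lambda_1-\lambda_2+2>n$, so it is discarded. This leaves exactly the $a=\lambda_1$ term, $(uv)^{\ell+\lambda_2}[m+1]_{u,v}=s_{(n-\ell-\lambda_1,\ell+\lambda_2)}(u,v)$.

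Your last paragraph goes wrong by cancelling $A_{\le n}$ against $(uvA)_{\le n}$ index by index in $a$. In fact $uv$ times the $a$-term of $A$ is the $(a-1)$-term. So $(uvA)_{\le n}$ is the sum of the $a$-terms with $\lceil|\lambda|/2\rceil\le a\le\lambda_1-1$, and the survivor is $a=\lambda_1$, not $a=\lceil|\lambda|/2\rceil$.

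Taking the endpoint $a=\lceil|\lambda|/2\rceil$ would give $(uv)^{\ell+\lfloor|\lambda|/2\rfloor}[m+1]_{u,v}$. That is wrong whenever $\lambda_1-\lambda_2\ge 2$. For instance, in Example~\ref{ex:n3-frobenius} it gives $uv$ instead of $1$ for $((3),\varnothing)$, and $u^2v+uv^2$ instead of $u+v$ for $((2),(1))$. Your proposed check with $((1,1),(1))$ cannot detect this, because there $\lambda_1=\lceil|\lambda|/2\rceil$.

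Replace the last paragraph with the bookkeeping above and your proof coincides with the paper's. There, $a_{i,j}-a_{i-1,j-1}$ is $+1$ at $M_{11}=\lambda_1$ and $-1$ only at $M_{11}=\lambda_2-1$, which lies at $i+j=n+\lambda_1-\lambda_2+2>n$ and so falls outside the range.
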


\begin{proof}
Recall that the Kostka coefficients $K_{\alpha,\beta} = \langle h_\beta, s_\alpha \rangle$ count the number of semistandard Young tableaux of shape $\alpha$ and content $\beta$. 
In the case where both the shape and content (of the same size) are at most two rows, $K_{(\alpha_1,\alpha_2),(\beta_1, \beta_2)} = 1$ if $\alpha_2 \leq \beta_1 \leq \alpha_1$, and $0$ otherwise.

Let $i, j \geq 0$ with $i + j \leq n$ and consider $M \in \CT(n,i,j)$.
Observe that $M_{22} = i+j - n + M_{11}$; since $i + j \leq n$, this gives $M_{11} \geq i+j - n + M_{11}$.
We have that 
\begin{equation}
    \langle h_{M_{11}}(\x)h_{M_{22}}(\x), s_{(\lambda_1,\lambda_2)} (\x) \rangle = K_{(\lambda_1,\lambda_2),(M_{11}, i+j-n+M_{11})}.
\end{equation}
This Kostka coefficient is $1$ if the partitions are of the same size, that is, $\lambda_1 + \lambda_2 = M_{11} + (i+j-n+M_{11})$, and if $\lambda_2 \leq M_{11} \leq \lambda_1$. 
Otherwise, it is $0$. 
Furthermore, this implies that the sum $i+j$ is constant as $i+j = n-2M_{11} + \lambda_1 + \lambda_2$.

Observe that $M_{12} = n-i-M_{11}$ and $M_{21} = n-j-M_{11}$. We have that 
\begin{equation}
\begin{aligned}
    &\langle e_{M_{12}}(\y)e_{M_{21}}(\y), s_{(2^\ell, 1^{n-2\ell-\lambda_1-\lambda_2})} (\y) \rangle\\ 
    &= K_{(n-\ell-\lambda_1-\lambda_2, \ell),(\max\{n-i-M_{11}, n-j-M_{11}\},\min\{n-i-M_{11}, n-j-M_{11}\} )},
\end{aligned}    
\end{equation}
because the transpose of $(2^\ell, 1^{n-2\ell-\lambda_1-\lambda_2})$ is $(n-\ell-\lambda_1-\lambda_2, \ell)$. 
This Kostka coefficient is $1$ if $\ell \leq \max\{n-i-M_{11}, n-j-M_{11}\} \leq n - \ell - \lambda_1 - \lambda_2$. 
Otherwise, it is $0$. 
From this, for the Kostka coefficient to be $1$, we deduce that $n - M_{11} - \ell \geq \min\{i,j\} \geq \lambda_1 + \lambda_2 -M_{11} + \ell$.
Note that the upper bound $\min\{i,j\} \leq n - M_{11} - \ell$ is always satisfied; 
we have $\min\{i,j\} \leq (i+j)/2 = (n-2M_{11} + \lambda_1+ \lambda_2)/2 \leq n-M_{11}-\ell$ because $(n-2M_{11} + \lambda_1+ \lambda_2)/2 \leq n-M_{11}-\ell$ is equivalent to $n-2\ell - \lambda_1 -\lambda_2 \geq 0$, which is always true since $\mu$ is a partition.

Write $a_{i,j}$ for the multiplicity of $s_\lambda(\x)s_\mu(\y)$ in equation~\eqref{eq:mackey-two-wedges}.
The two conditions above say that $a_{i,j} = 1$ if
\begin{equation}
    M_{11} = \tfrac{1}{2}\big(n + \lambda_1 + \lambda_2 - i - j\big) \in \Z, \quad
    \lambda_2 \leq M_{11} \leq \lambda_1, \quad
    \min\{i,j\} \geq \lambda_1 + \lambda_2 - M_{11} + \ell,
\end{equation}
and $a_{i,j} = 0$ otherwise. 
Note that $a_{i,j} \in \{0,1\}$ (Corollary~\ref{cor:wedges-mult-free}) because $i+j$ determines $M_{11}$.
(The size condition for the second Kostka coefficient is the same as for the first, namely $i + j = n - 2M_{11} + \lambda_1 + \lambda_2$.)
By Proposition~\ref{prop:module-structure}, the multiplicity of $s_\lambda(\x)s_\mu(\y)$ in $\Frob(R_{\mathfrak{B}_n}^{(0,2)};u,v)$ equals $\sum_{i+j \leq n} (a_{i,j} - a_{i-1,j-1})\,u^i v^j$.
Replacing $(i,j)$ by $(i-1,j-1)$ replaces $M_{11}$ by $M_{11}+1$ and leaves the third condition unchanged, so $a_{i,j} - a_{i-1,j-1}$ vanishes unless $M_{11} = \lambda_1$, where it equals $1$, or $M_{11} = \lambda_2 - 1$, where it equals $-1$.
The latter forces $i + j = n + \lambda_1 - \lambda_2 + 2 > n$, which lies outside the range of Proposition~\ref{prop:module-structure}.
Hence only $M_{11} = \lambda_1$ contributes, giving
\begin{equation}
    \sum_{\substack{{i+j = n-\lambda_1 + \lambda_2,}\\{\min\{i,j\} \geq \ell + \lambda_2}}} u^iv^j = (uv)^{\ell+ \lambda_2} [n-\lambda_1-\lambda_2 -2\ell +1]_{u,v}.
\end{equation}
\end{proof}

We deduce the following multiplicity-free decomposition as a $\GL_2 \times \mathfrak{B}_n$-module of $R_{\mathfrak{B}_n}^{(0,2)}$. 
Here $\mathbb{S}^{(a,b)}(\C^2)$ denotes the irreducible polynomial $\GL_2$-representation with highest weight $(a,b)$, whose character is $s_{(a,b)}(u,v)$.
\begin{corollary}\label{cor:multiplicity-free}
    As a $\GL_2 \times \mathfrak{B}_n$-module, we have the decomposition
    \begin{equation}
        R_{\mathfrak{B}_n}^{(0,2)} \cong \bigoplus_{\substack{(\lambda, \mu) \vdash n,\\
        \lambda = (\lambda_1,\lambda_2),\\
        \mu = (2^\ell, 1^{n-\lambda_1-\lambda_2 - 2\ell})}}
        \mathbb{S}^{(n-\ell-\lambda_1, \ell+\lambda_2)}(\C^2) \otimes V^{(\lambda, \mu)}.
    \end{equation}
\end{corollary}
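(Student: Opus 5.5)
The plan is to upgrade the character computation of Theorem~\ref{thm:explicit-0-2} to a statement about $\GL_2 \times \mathfrak{B}_n$-modules. First I will check that $R_{\mathfrak{B}_n}^{(0,2)}$ really is a $\GL_2 \times \mathfrak{B}_n$-module. $\GL_2$ acts on the pair of fermionic variable sets $(\bm{\theta},\bm{\xi})$ by linear substitution $\theta_i \mapsto a\theta_i + b\xi_i$, $\xi_i \mapsto c\theta_i + d\xi_i$, simultaneously for all $i$. This action commutes with the diagonal $\mathfrak{B}_n$-action, since $\mathfrak{B}_n$ acts on the index $i$ while $\GL_2$ acts on the set label. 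Consequently $\GL_2$ preserves the subalgebra of $\mathfrak{B}_n$-invariants and its positive-degree part, hence preserves the ideal $I_{\mathfrak{B}_n}^{(0,2)}$. Because $I_{\mathfrak{B}_n}^{(0,2)}$ is homogeneous in total degree, the $\GL_2$-action descends to a polynomial representation on each total-degree component of the quotient. The diagonal torus $\mathrm{diag}(u,v)$ then acts on the bigraded piece $(R_{\mathfrak{B}_n}^{(0,2)})_{i,j}$ by the scalar $u^iv^j$.

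Next, decompose $R_{\mathfrak{B}_n}^{(0,2)}$ into $\mathfrak{B}_n$-isotypic components: $R_{\mathfrak{B}_n}^{(0,2)} \cong \bigoplus_{(\lambda,\mu)} U_{(\lambda,\mu)} \otimes V^{(\lambda,\mu)}$, where $U_{(\lambda,\mu)} = \operatorname{Hom}_{\mathfrak{B}_n}(V^{(\lambda,\mu)}, R_{\mathfrak{B}_n}^{(0,2)})$. Since the two actions commute, each $U_{(\lambda,\mu)}$ is a finite-dimensional polynomial $\GL_2$-representation. Its torus character is exactly $\langle \Frob(R_{\mathfrak{B}_n}^{(0,2)};u,v), s_\lambda(\x)s_\mu(\y)\rangle$, because the $(u^iv^j)$-weight space of $U_{(\lambda,\mu)}$ is the multiplicity space of $V^{(\lambda,\mu)}$ in the $(i,j)$-bigraded piece.

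By Theorem~\ref{thm:explicit-0-2}, this torus character is the single Schur polynomial $s_{(n-\ell-\lambda_1,\ell+\lambda_2)}(u,v)$ when $\lambda$ has at most two rows and $\mu=(2^\ell,1^m)$, and $0$ otherwise. A polynomial $\GL_2$-representation is semisimple and determined up to isomorphism by its character. The characters $s_\nu(u,v)$ of the irreducibles $\mathbb{S}^\nu(\C^2)$ are linearly independent, so $U_{(\lambda,\mu)} \cong \mathbb{S}^{(n-\ell-\lambda_1,\ell+\lambda_2)}(\C^2)$. Summing over $(\lambda,\mu)$ gives the decomposition.

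For multiplicity-freeness, I will observe that distinct bipartitions give distinct $\mathfrak{B}_n$-factors. The only step requiring care is the well-definedness of the $\GL_2$-action on the quotient (the invariance of the ideal), and that is immediate from commutativity. I expect no real obstacle.
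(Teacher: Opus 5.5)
Your argument is correct, and its core matches the paper's proof. In both, the multiplicity space of $V^{(\lambda,\mu)}$ is a polynomial $\GL_2$-module whose torus character is the bigraded multiplicity. Theorem~\ref{thm:explicit-0-2} shows this character is the single Schur polynomial $s_{(n-\ell-\lambda_1,\ell+\lambda_2)}(u,v)$, so the multiplicity space is the corresponding irreducible.

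The difference is where the $\GL_2\times\mathfrak{B}_n$-structure comes from. The paper imports it, together with the isotypic decomposition into pieces $U^\nu_{0|2}\otimes V^{(\lambda,\mu)}$, from \cite[Theorem 1.1]{Lentfer-Supersymmetry}. You build it by hand from three ingredients:
\begin{itemize}
\item the linear-substitution action on $\bigwedge(\C^n\otimes\C^2)$;
\item the commutation of that action with $\mathfrak{B}_n$, hence stability of the ideal;
\item standard facts about polynomial $\GL_2$-representations: complete reducibility, characters determined by torus weights, and linear independence of Schur polynomials.
\end{itemize}
Your route is self-contained and elementary. It works here because in the purely fermionic case the symmetry is an honest group $\GL_2$ acting by algebra automorphisms. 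The cited theorem is what one needs in the mixed bosonic-fermionic setting, where the full symmetry is the Lie superalgebra $\mathfrak{gl}(k|j)$ rather than a group and the multiplicity characters are super Schur functions.

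One small fix: with the substitution exactly as you wrote it, $A\mapsto\phi_A$ satisfies $\phi_A\phi_B=\phi_{BA}$, so it is an anti-homomorphism. Use the transpose, $\theta_i\mapsto a\theta_i+c\xi_i$ and $\xi_i\mapsto b\theta_i+d\xi_i$, to get a left action. This changes nothing about the torus character or the rest of your argument.
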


\begin{proof}
    By \cite[Theorem 1.1]{Lentfer-Supersymmetry} (with $(k,j) = (0,2)$ and $G = \mathfrak{B}_n$), there is an isomorphism of $\GL_2 \times \mathfrak{B}_n$-modules
    \begin{equation}
        R_{\mathfrak{B}_n}^{(0,2)} \cong \bigoplus_{\nu} \bigoplus_{(\lambda,\mu) \vdash n} \left(U^\nu_{0|2} \otimes V^{(\lambda, \mu)}\right)^{\oplus c_{\nu,(\lambda,\mu)}},
    \end{equation}
    where $U^\nu_{0|2} = \mathbb{S}^{\nu'}(\C^2)$ is the simple polynomial $\GL_2$-module with character $s_\nu(0/u,v) = s_{\nu'}(u,v)$, and the bigrading records the $\GL_2$-weight.
    Comparing bigraded multiplicities with Theorem~\ref{thm:explicit-0-2} identifies the coefficients, giving the claimed decomposition.
\end{proof}
Compare with skew Howe duality \cite{Howe}: while the present decomposition is multiplicity-free, distinct bipartitions may carry the same irreducible $\GL_2$-representation. 
For instance, in Example~\ref{ex:n3-frobenius} at $n=3$, both $(\varnothing,(2,1))$ and $((1,1),(1))$ have $\mathbb{S}^{(2,1)}(\C^2)$.

Returning to the current thread, we extract a concise formula for the multiplicity of the characters indexed by bipartitions $((n-i),(1^i))$, which can be thought of as a type $B$ analogue of the hook-shaped partitions for type $A$.
In type $A$ the corresponding multiplicities for the trivial, hook, and sign characters are not given by a single formula (Proposition~\ref{prop:02-all-characters}).

\begin{corollary}\label{cor:typeB-hooks}
    For $0 \leq i \leq n$, the bigraded multiplicity of $V^{((n-i),(1^i))}$ in $R_{\mathfrak{B}_n}^{(0,2)}$ is given uniformly by
    \begin{equation}
        \langle \Frob(R_{\mathfrak{B}_n}^{(0,2)};u,v),s_{(n-i)}(\x) s_{(1^i)}(\y) \rangle = [i+1]_{u,v}.
    \end{equation}
    The cases $i=0$ and $i=n$ are the trivial and sign characters.
\end{corollary}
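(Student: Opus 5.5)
This is a direct specialization of Theorem~\ref{thm:explicit-0-2}. The plan is to identify the parameters $(\lambda_1,\lambda_2,\ell,m)$ attached to the bipartition $((n-i),(1^i))$ and substitute them into the formula. No new representation-theoretic input should be needed.

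First, $\lambda=(n-i)$ has at most two rows, so $\lambda_1 = n-i$ and $\lambda_2 = 0$. Second, $\mu = (1^i)$ has at most two columns and no rows of length $2$, so $\ell = 0$. Third, $m = n-\lambda_1-\lambda_2-2\ell = i$. Hence the pair lies in the index set of Theorem~\ref{thm:explicit-0-2}. The multiplicity there is $(uv)^{\ell+\lambda_2}[m+1]_{u,v}$, which becomes $(uv)^0[i+1]_{u,v} = [i+1]_{u,v}$.

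As a consistency check, I would also rewrite the answer in the Schur form of the theorem. It reads $s_{(n-\ell-\lambda_1,\ell+\lambda_2)}(u,v) = s_{(i,0)}(u,v) = s_{(i)}(u,v)$. By equation~\eqref{eq:two-row-schur}, this equals $[i+1]_{u,v}$, so the two forms agree.

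The boundary cases need a brief check. For $i=0$ we get the trivial character $((n),\varnothing)$ with multiplicity $[1]_{u,v}=1$. For $i=n$ we get the sign character $(\varnothing,(1^n))$ with multiplicity $[n+1]_{u,v}$, consistent with Table~\ref{tab:sign-B_n}. The only point needing care, rather than a real obstacle, is that $\lambda=\varnothing$ when $i=n$ and $\mu=\varnothing$ when $i=0$. In the theorem's notation these are $\lambda=(0,0)$ and $\mu=(2^0,1^0)$, which are permitted values. One can also cross-check small $n$ against Example~\ref{ex:n3-frobenius}: for $n=3$ the coefficients of $s_{(2)}(\x)s_{(1)}(\y)$ and $s_{(1)}(\x)s_{(1,1)}(\y)$ are $u+v$ and $u^2+uv+v^2$.
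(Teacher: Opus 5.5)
Your proposal is correct and matches the paper's intended argument. The corollary is an immediate specialization of Theorem~\ref{thm:explicit-0-2} with $\lambda_1=n-i$, $\lambda_2=\ell=0$, $m=i$, which gives $s_{(i)}(u,v)=[i+1]_{u,v}$, and your handling of the empty-partition boundary cases is fine.
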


We recall the following definition from Kim--Rhoades \cite{MR4381935}. Let $\Pi(n)_{\geq 0}$ denote the set of \emph{modified Motzkin paths}, which is the set of all $n$-step lattice paths $\pi = (\pi_1,\ldots,\pi_n)$ in $\mathbb{Z}^2$ that start at the origin $(0,0)$, never go below the $x$-axis, and whose steps $\pi_i$ are of the following four types:
\begin{itemize}
    \item $U$: an up-step $(1,1)$,
    \item $H_\theta$: a horizontal step $(1,0)$ of weight $\theta_i$,
    \item $H_\xi$: a horizontal step $(1,0)$ of weight $\xi_i$,
    \item $D$: a down-step $(1,-1)$ of weight $\theta_i\xi_i$.
\end{itemize}
Let the weight of a path $\pi$ be the product of all weights of the steps in the path (where we consider up-steps $U$ to have weight 1).

Kim--Rhoades showed the following in a type-independent manner; here we use the type $B$ specialization. Let $\deg_\theta(\pi)$ denote the $\theta$-degree of the path $\pi$, and let $\deg_\xi(\pi)$ denote the $\xi$-degree of the path $\pi$.

\begin{proposition}[{\cite[Corollary 5.4]{MR4381935}}]\label{prop:KR-Hilb}
    For $n \geq 1$,
    \begin{equation}
        \Hilb(R_{\mathfrak{B}_n}^{(0,2)} ; u,v) = \sum_{\pi \in \Pi(n)_{\geq 0}} u^{\deg_\theta(\pi)} v^{\deg_\xi(\pi)}.
    \end{equation}
\end{proposition}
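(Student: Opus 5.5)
The plan is to bypass the Frobenius series and compare both sides coefficientwise using Theorem~\ref{thm:KR}. For $W=\mathfrak{B}_n$ we have $\dim \wedge^i V=\binom{n}{i}$, and $V\cong V^*$. Taking dimensions in Theorem~\ref{thm:KR} gives
\begin{equation}
    \dim (R_{\mathfrak{B}_n}^{(0,2)})_{i,j} = \binom{n}{i}\binom{n}{j} - \binom{n}{i-1}\binom{n}{j-1} \quad (i+j\leq n),
\end{equation}
and $\dim (R_{\mathfrak{B}_n}^{(0,2)})_{i,j}=0$ when $i+j>n$. It therefore suffices to show that the number of paths $\pi\in\Pi(n)_{\geq 0}$ with $\deg_\theta(\pi)=i$ and $\deg_\xi(\pi)=j$ is given by the same expression.

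To count paths, I will encode each step $\pi_k$ by a pair of bits $(a_k,b_k)\in\{0,1\}^2$, recording whether $\theta_k$ and $\xi_k$ occur in its weight. Thus $U\mapsto(0,0)$, $H_\theta\mapsto(1,0)$, $H_\xi\mapsto(0,1)$ and $D\mapsto(1,1)$. This is a bijection between unrestricted $n$-step words and pairs of subsets $(A,B)$ of $[n]$, with $|A|=\deg_\theta$ and $|B|=\deg_\xi$. Hence there are exactly $\binom{n}{i}\binom{n}{j}$ unrestricted words with bidegree $(i,j)$. The height after $k$ steps is $k-\sum_{r\leq k}(a_r+b_r)$, so the final height is $n-i-j$. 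In particular, no nonnegative path exists when $i+j>n$, which matches the vanishing in Theorem~\ref{thm:KR}.

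For $i+j\leq n$ I will use a reflection argument. Call a word bad if it dips below the $x$-axis, and let $k$ be the first time its height equals $-1$. Apply the involution $U\leftrightarrow D$ to the first $k$ steps and leave the horizontal steps alone. Before the flip the prefix has $\#D=\#U+1$, so the flip removes exactly one $\theta$ and one $\xi$. The resulting word therefore has bidegree $(i-1,j-1)$, and its prefix first reaches height $+1$ at step $k$. Conversely, any word of bidegree $(i-1,j-1)$ has final height $n-i-j+2\geq 2$, so it reaches height $+1$ at some first time $k$. Flipping $U\leftrightarrow D$ in that prefix recovers a bad word of bidegree $(i,j)$. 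These two maps are mutually inverse, so bad words number $\binom{n}{i-1}\binom{n}{j-1}$, and subtracting gives the claim.

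The main point needing care is checking that the ``first hitting time'' is preserved in both directions. The height sequence of the flipped prefix is the negation of the original, so the first visit to $-1$ corresponds exactly to the first visit to $+1$. With that verified, the identity holds coefficient by coefficient, which proves the proposition.
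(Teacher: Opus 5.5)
Your proof is correct, and it takes a different route from the paper's. You use only the dimension consequence of Theorem~\ref{thm:KR}, which gives the closed form $\binom{n}{i}\binom{n}{j}-\binom{n}{i-1}\binom{n}{j-1}$ for each bigraded piece. You then count paths of bidegree $(i,j)$ directly, using the bit encoding and a $U\leftrightarrow D$ reflection; this reflection is a valid bijection. When $i=0$ or $j=0$ both sides are empty, since a down-step carries both a $\theta$ and a $\xi$.

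The paper never passes through this closed form. It pairs its explicit Frobenius series (Theorem~\ref{thm:explicit-0-2}) with $(h_1(\x)+h_1(\y))^n$ to obtain the Kostka-number formula of Corollary~\ref{cor:new-Hilbert-B02}. It then matches terms with paths by grouping paths according to their $U/D/H$ skeleton:
\begin{itemize}
\item $\binom{n}{i}K_{\lambda,(1^i)}$ counts skeletons with $\lambda_1$ up-steps and $\lambda_2$ down-steps, via two-row standard tableaux read as Dyck prefixes;
\item the sum over $\mu=(2^\ell,1^{n-i-2\ell})$ of $K_{\mu,(1^{n-i})}s_{(n-i-\ell,\ell)}(u,v)$ reassembles $(u+v)^{n-i}$, the generating function for labeling the horizontal steps.
\end{itemize}

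Your argument is shorter and more elementary, and it bypasses both the Mackey computation of Proposition~\ref{prop:typeB-wedges} and Theorem~\ref{thm:explicit-0-2}. The paper's argument is designed to showcase its Frobenius series. In exchange, it gives a refinement your count does not see: the paths with exactly $\lambda_1$ up-steps and $\lambda_2$ down-steps account for precisely the total Hilbert series contribution of the irreducibles $V^{(\lambda,\mu)}$ with $\lambda=(\lambda_1,\lambda_2)$.
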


We will give a new proof of this result, using our Frobenius series of $R_{\mathfrak{B}_n}^{(0,2)}$.

\begin{proof}
Start with the expression for $\Frob(R_{\mathfrak{B}_n}^{(0,2)};u,v)$ from Theorem~\ref{thm:explicit-0-2}. Then take the inner product with $\sum_{i=0}^n \binom{n}{i} h_{(1^i)}(\x)  h_{(1^{n-i})}(\y)$ (from equation~\eqref{eq:to-pair}) to obtain the Hilbert series:
\begin{equation}\label{eq:Hilbert-Kostka}
\begin{aligned}
    \Hilb&(R_{\mathfrak{B}_n}^{(0,2)};u,v)\\ 
    &= \left\langle \!\sum_{\substack{(\lambda, \mu)\vdash n,\\ \lambda = (\lambda_1,\lambda_2),\\ \mu = (2^\ell, 1^{n-\lambda_1-\lambda_2-2\ell}) }} \!s_{(n-\ell-\lambda_1, \ell+\lambda_2)}(u,v)s_{\lambda}(\x) s_{\mu}(\y), \sum_{i=0}^n \binom{n}{i} h_{(1^i)}(\x)  h_{(1^{n-i})}(\y) \right\rangle\\
    &= \sum_{\substack{(\lambda, \mu)\vdash n,\\ \lambda = (\lambda_1,\lambda_2),\ i =|\lambda|,\\ \mu = (2^\ell, 1^{n-\lambda_1-\lambda_2-2\ell})}} \!\binom{n}{i} K_{\lambda, (1^i)} K_{\mu, (1^{n-i})}s_{(n-\ell-\lambda_1,\ell+\lambda_2)}(u,v).
\end{aligned}
\end{equation}

Now let us interpret this sum as a weighted sum over certain paths. The Kostka number 
$K_{\lambda, (1^i)}$ counts the number of standard Young tableaux of shape $(\lambda_1,\lambda_2)$, where $i = \lambda_1 + \lambda_2$.

A \textit{Dyck path of semilength $r$} is a lattice path in $\Z^2$ from $(0,0)$ to $(r,r)$ consisting of North $N=(0,1)$ and East $E = (1,0)$ steps that never goes below the diagonal line $y=x$.
A \textit{Dyck path prefix} is any early truncation of a Dyck path (still starting from $(0,0)$). 
Then it is well known that standard Young tableaux of a two-row shape $(\lambda_1,\lambda_2)$ are in bijection with Dyck path prefixes from $(0,0)$ to $(\lambda_2,\lambda_1)$ as follows. 
Initialize the Dyck path prefix at $(0,0)$. For $j$ from $1$ to $i$: if $j$ is in the first row (the row of length $\lambda_1$), then append a North step to the Dyck path prefix, while if $j$ is in the second row (the row of length $\lambda_2$), then append an East step to the Dyck path prefix.

Now, the number of underlying paths with $\lambda_1$ up-steps $U$, $\lambda_2$ down-steps $D$ and $n-i$ unlabeled horizontal steps $H$ is $\binom{n}{i}K_{\lambda,(1^i)}$. 
Choose the positions of the $H$ steps in $\binom{n}{i}$ ways, and then a word in $\{U,D\}$ with $\lambda_1$ up-steps and $\lambda_2$ down-steps that never dips below the $x$-axis in $K_{\lambda,(1^i)}$ ways (by reading $U$ as a North step and $D$ as an East step, this is a Dyck path prefix ending at $(\lambda_2,\lambda_1)$).

We now assign the labels $H_\theta$ or $H_\xi$ to the horizontal steps and keep track of the weights.
The total degree of all weights on a given path is
$\#H_\theta + \#H_\xi + 2\#D = (n-i) + 2\lambda_2 = n - \lambda_1 + \lambda_2$.
Recall that $s_{(n-\ell-\lambda_1, \ell+\lambda_2)}(u,v) =(uv)^{\ell+\lambda_2}[n-2\ell-\lambda_1-\lambda_2+1]_{u,v}$ and  
$s_{(n-\ell-\lambda_1 - \lambda_2, \ell)}(u,v) =(uv)^{\ell}[n-2\ell-\lambda_1-\lambda_2+1]_{u,v}$ by equation~\eqref{eq:two-row-schur}.

Now we extract the weights from the paths.
There are $\lambda_2$ down-steps, each of weight $\theta_i\xi_i$, contributing $(uv)^{\lambda_2}$. 
By equation~\eqref{eq:two-row-schur},
\begin{equation}
    s_{(n-\ell-\lambda_1, \ell+\lambda_2)}(u,v) = (uv)^{\lambda_2} s_{(n-i-\ell, \ell)}(u,v),
\end{equation}
so the remaining factor $s_{(n-i-\ell,\ell)}(u,v)$ has multiplicity $K_{\mu,(1^{n-i})}$.
Summing over all $\ell \geq 0$ exactly gives the generating function $(u+v)^{n-i}$ for labeling each of the $n-i$ unlabeled $H$ steps as either $H_\theta$ or $H_\xi$.
By the binomial theorem, and expanding the binomial coefficients as a telescoping sum, we have
\begin{equation}
\begin{aligned}
    (u+v)^{n-i} &= \sum_{p=0}^{n-i} \binom{n-i}{p} u^p v^{n-i-p}\\
    &= \sum_{\ell \geq 0} \left( \binom{n-i}{\ell} - \binom{n-i}{\ell-1 }\right) \sum_{p = \ell}^{n-i-\ell} u^p v^{n-i-p}\\
    &= \sum_{\ell \geq 0} \left( \binom{n-i}{\ell} - \binom{n-i}{\ell-1 }\right) (uv)^\ell [n-i-2\ell+1]_{u,v}\\
    &= \sum_{\ell \geq 0} K_{(2^\ell, 1^{n-i-2\ell}),(1^{n-i})} s_{(n-i-\ell,\ell)}(u,v),
\end{aligned}
\end{equation}
because $K_{(2^\ell, 1^{n-i-2\ell}),(1^{n-i})} = \binom{n-i}{\ell} - \binom{n-i}{\ell-1 }$.
This is the weight from all $H$ steps for all paths obtained from a single underlying path with $\lambda_1$ $U$ steps and $\lambda_2$ $D$ steps.
There are $\binom{n}{i} K_{\lambda,(1^i)}$ such underlying paths, so summing over all $\lambda$ produces every modified Motzkin path $\pi$ in $\Pi(n)_{\geq 0}$ exactly once, weighted by $u^{\deg_\theta(\pi)}v^{\deg_\xi(\pi)}$.
This sum is exactly equation~\eqref{eq:Hilbert-Kostka}, establishing the claimed formula. Figure~\ref{fig:motzkin} illustrates the case $n = 2$.
\end{proof}

\begin{figure}
    \centering
\def\sep{2.9}
\begin{tikzpicture}
\node[anchor=south west] at (0,1) {\motzkin{up,down}{}};
\node[anchor=south west] at (2,1) {\motzkin{up,up}{}};
\node[anchor=south west] at (4,1) {\motzkin{up,flat-theta}{}};
\node[anchor=south west] at (5,1) {\motzkin{up,flat-xi}{}};
\node[anchor=south west] at (7,1) {\motzkin{flat-theta,up}{}};
\node[anchor=south west] at (8,1) {\motzkin{flat-xi,up}{}};
\node[anchor=south west] at (10,1) {\motzkin{flat-theta,flat-theta}{}};
\node[anchor=south west] at (11.5,1) {\motzkin{flat-xi,flat-xi}{}};
\node[anchor=south west] at (13,1) {\motzkin{flat-theta,flat-xi}{}};
\node[anchor=south west] at (14.5,1) {\motzkin{flat-xi,flat-theta}{}};
\node[anchor=south west] at (0,-1) {\motzkin{up,down}{}};
\node[anchor=south west] at (2,-1) {\motzkin{up,up}{}};
\node[anchor=south west] at (4.5,-1) {\motzkin{up,flat}{}};
\node[anchor=south west] at (7.5,-1) {\motzkin{flat,up}{}};
\node[anchor=south west] at (12.25,-1) {\motzkin{flat,flat}{}};
\end{tikzpicture}
    \caption{The modified Motzkin paths in $\Pi(2)_{\geq 0}$ (top), grouped by the underlying path in the alphabet $\{U,D,H\}$ from which they are obtained by labeling each $H$ step as $H_\theta$ or $H_\xi$ (bottom). 
    There are $\binom{n}{i}K_{\lambda,(1^i)}$ underlying paths with $\lambda_1$ up-steps and $\lambda_2$ down-steps, and each admits $2^{n-i}$ labelings.}
    \label{fig:motzkin}
\end{figure}

From the proof, we record a new formula for the Hilbert series.

\begin{corollary}\label{cor:new-Hilbert-B02}
For $n \geq 1$,
    \begin{equation}\begin{aligned}
    \Hilb(R_{\mathfrak{B}_n}^{(0,2)};u,v) &=\sum_{\substack{(\lambda, \mu)\vdash n,\\ \lambda = (\lambda_1,\lambda_2),\ i =|\lambda|,\\ \mu = (2^\ell, 1^{n-\lambda_1-\lambda_2-2\ell})}} \binom{n}{i} K_{\lambda, (1^i)} K_{\mu, (1^{n-i})}s_{(n-\ell-\lambda_1,\ell+\lambda_2)}(u,v).
\end{aligned}\end{equation}
\end{corollary}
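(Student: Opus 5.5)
This corollary is read off from the computation already carried out in the proof of Proposition~\ref{prop:KR-Hilb}, so the plan is to isolate that computation as a standalone derivation. The starting point is Theorem~\ref{thm:explicit-0-2}, which gives the Frobenius series as a sum of the terms $s_{(n-\ell-\lambda_1,\ell+\lambda_2)}(u,v)\,s_\lambda(\x)s_\mu(\y)$. Each term is indexed by a bipartition $(\lambda,\mu)\vdash n$ with $\lambda=(\lambda_1,\lambda_2)$ and $\mu=(2^\ell,1^{n-\lambda_1-\lambda_2-2\ell})$. As explained before equation~\eqref{eq:to-pair}, the Hilbert series is obtained by pairing this with $\sum_{i=0}^n\binom{n}{i}h_{(1^i)}(\x)h_{(1^{n-i})}(\y)$ under the extended Hall inner product.

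The steps are as follows.
\begin{enumerate}
\item Use bilinearity and the factorization $\langle f_1(\x)g_1(\y),f_2(\x)g_2(\y)\rangle=\langle f_1,f_2\rangle\langle g_1,g_2\rangle$. The coefficients $s_{(\cdot,\cdot)}(u,v)$ are scalars for the inner product, so each summand pairs to
\begin{equation*}
s_{(n-\ell-\lambda_1,\ell+\lambda_2)}(u,v)\sum_{i}\binom{n}{i}\langle s_\lambda,h_{(1^i)}\rangle\langle s_\mu,h_{(1^{n-i})}\rangle .
\end{equation*}
\item Since $\langle s_\lambda,h_{(1^i)}\rangle=K_{\lambda,(1^i)}\,\delta_{|\lambda|,i}$, the inner sum over $i$ collapses to the single term $i=|\lambda|$. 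In that term the second factor is $\langle s_\mu,h_{(1^{n-i})}\rangle=K_{\mu,(1^{n-i})}$, because $|\mu|=n-|\lambda|$.
\item Substituting back gives exactly the displayed sum, which is equation~\eqref{eq:Hilbert-Kostka}.
\end{enumerate}

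There is no real obstacle here; the only point needing care is the bookkeeping in step (2), namely that the degree constraint in the Kostka pairing forces $i=|\lambda|$ and thereby kills all cross terms.

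If desired, one can also confirm consistency with Proposition~\ref{prop:KR-Hilb} via the identity $(u+v)^{n-i}=\sum_\ell K_{(2^\ell,1^{n-i-2\ell}),(1^{n-i})}s_{(n-i-\ell,\ell)}(u,v)$ from that proof. This check is not needed for the corollary itself.
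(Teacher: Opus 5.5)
Your proposal is correct and follows the paper's own derivation: the corollary is exactly equation~\eqref{eq:Hilbert-Kostka}, which the paper obtains in the proof of Proposition~\ref{prop:KR-Hilb} by pairing the Frobenius series of Theorem~\ref{thm:explicit-0-2} with $\sum_{i}\binom{n}{i}h_{(1^i)}(\x)h_{(1^{n-i})}(\y)$ and using $\langle s_\lambda,h_{(1^i)}\rangle=K_{\lambda,(1^i)}\delta_{|\lambda|,i}$ to collapse the sum over $i$. As you note, the Motzkin-path interpretation that follows in that proof is not needed for the corollary.
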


\section{The sign character of \texorpdfstring{$R_{\mathfrak{B}_n}^{(0,3)}$}{RBn(0,3)}}\label{sec:sign}

The goal of this section is to prove the following result on the multiplicity of the sign character in $R_{\mathfrak{B}_n}^{(0,3)}$.
\begin{theorem}\label{thm:03sgn}
    For $n \geq 1$, the multiplicity of the sign character is given by
    \begin{equation}
        \langle \Frob(R_{\mathfrak{B}_n}^{(0,3)};u,v,w),s_{\varnothing}(\x) s_{(1^n)}(\y) \rangle =s_{(n)}(u,v,w).
    \end{equation}
\end{theorem}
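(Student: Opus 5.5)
We will prove the identity by matching a lower bound and an upper bound on the $\mathfrak{B}_n$-sign isotypic component of $R_{\mathfrak{B}_n}^{(0,3)}$. The organizing principle is the $\GL_3$-action that rotates the three sets $\bm{\theta},\bm{\xi},\bm{\rho}$. This action commutes with $\mathfrak{B}_n$ and preserves the defining ideal $I_{\mathfrak{B}_n}^{(0,3)}$, because the generating set of Proposition~\ref{prop:Weyl-B_n-bf} is $\GL_3$-stable up to span. Hence the sign-isotypic component is a polynomial $\GL_3$-module, and its trigraded multiplicity is a sum of Schur polynomials in $u,v,w$. Since $s_{(n)}(u,v,w)$ has dimension $\binom{n+2}{2}$, it suffices to show two things: the $\GL_3$-module $\mathbb{S}^{(n)}(\C^3)$ occurs, and the sign-isotypic part has dimension at most $\binom{n+2}{2}$.

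\emph{Sign-isotypic elements of $S$.} By Remark~\ref{rem:averaging}, every sign-isotypic class in $R_{\mathfrak{B}_n}^{(0,3)}$ lifts to a polynomial $f$ with $g\cdot f=\det(g) f$. Apply the sign flip $\epsilon_i=-1$: a monomial survives only if its total degree in the index $i$ (over all three sets) is odd, hence equal to $1$ or $3$. So the sign-isotypic part of $S$ is spanned by $\mathfrak{S}_n$-antisymmetrizations of monomials $\prod_i z_i$, where each $z_i$ is one of $\theta_i,\xi_i,\rho_i$ or the triple $\theta_i\xi_i\rho_i$.
\begin{itemize}
\item For monomials with all index-degrees equal to $1$, the antisymmetrization depends only on the color content $(a,b,c)$ with $a+b+c=n$. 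Transposing two equal-color indices gives $-1$ from anticommutation times $\sgn=-1$, so nothing cancels. This yields exactly one element per weight, $\binom{n+2}{2}$ in total.
\item These elements span the $\GL_3$-module generated by $\theta_1\cdots\theta_n$, which is therefore $\mathbb{S}^{(n)}(\C^3)$.
\end{itemize}

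\emph{Lower bound.} The map $S\to \C[\bm{\theta}]$ killing $\bm{\xi},\bm{\rho}$ sends $I_{\mathfrak{B}_n}^{(0,3)}$ into $I_{\mathfrak{B}_n}^{(0,1)}=0$. Hence $\theta_1\cdots\theta_n\notin I_{\mathfrak{B}_n}^{(0,3)}$, consistent with Proposition~\ref{prop:one-set}. It is a $\GL_3$-highest weight vector of weight $(n,0,0)$. Since $\GL_3$ preserves the ideal, the whole $\GL_3$-submodule it generates in the quotient is a nonzero quotient of the irreducible $\mathbb{S}^{(n)}(\C^3)$, hence isomorphic to it. This gives $\langle\cdot\rangle\geq s_{(n)}(u,v,w)$ coefficientwise.

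\emph{Upper bound (main obstacle).} We must show that every antisymmetrization containing at least one triple factor $\theta_i\xi_i\rho_i$ lies in $I_{\mathfrak{B}_n}^{(0,3)}$. The plan is induction on the number $t\geq 1$ of triple factors, top-down from the maximal $t$.

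Take the ideal generators $p_{\theta\xi}=\sum_i\theta_i\xi_i$, $p_{\theta\rho}$ and $p_{\xi\rho}$, which lie in the ideal by Proposition~\ref{prop:Weyl-B_n-bf}. Multiply, for example, $p_{\theta\xi}$ by a sign-type element $F$ with $t-1$ triples, whose remaining singleton indices carry a prescribed $\rho$. Only the indices carrying $\rho_i$ survive, and each converts one singleton $\rho_i$ into a triple. The product is therefore the desired antisymmetrization with $t$ triples, up to a nonzero scalar (a count of $\rho$-singletons), plus terms with the same $t$. We will arrange a triangular system over the colour content so the scalars are invertible. Terms that would have more triples are already in the ideal by the downward induction.

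The delicate points are twofold. First, the products must produce a triangular (invertible) system. Second, the sign bookkeeping from reordering fermions must not make the coefficients cancel. We would first test the cases $n=2,3$ against the data in Appendix~\ref{sec:appendix} to fix the triangular ordering.

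With that done, the sign-isotypic part of $R_{\mathfrak{B}_n}^{(0,3)}$ is spanned by the images of the $\binom{n+2}{2}$ weight elements, which gives equality with $s_{(n)}(u,v,w)$.
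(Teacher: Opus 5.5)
Your reduction is sound. The $\GL_3$-stability of $I$, the description of the sign-isotypic part of $S$ by alternants of monomials whose index-degrees are all $1$ or $3$, and the identification of the all-singleton alternants with $\mathbb{S}^{(n)}(\C^3)$ are all correct. So is the lower bound via the specialization $\bm{\xi},\bm{\rho}\mapsto 0$, which fixes $\theta_1\cdots\theta_n$ and kills $I$.

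The gap is the upper bound, which is the substantive half. You correctly guess that the scalar is the number of $\rho$-singletons. But you then leave possible sign cancellation and a ``triangular system'' to be settled by testing $n=2,3$ against data, and that is not an argument. The step in fact closes directly.

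Write $\mathrm{Alt}(y)=\sum_{\sigma\in\mathfrak{S}_n}\sgn(\sigma)\,\sigma(y)$. Let $m$ have $t-1$ triples and $a,b,c+1$ singletons $\theta,\xi,\rho$, and let $C$ be the set of indices carrying a $\rho$-singleton. Since $p_{\theta\xi}=\sum_i\theta_i\xi_i$ is even and $\mathfrak{S}_n$-invariant, and $\theta_i\xi_i m=0$ for $i\notin C$,
\[
p_{\theta\xi}\,\mathrm{Alt}(m)=\mathrm{Alt}(p_{\theta\xi}\,m)=\sum_{i\in C}\mathrm{Alt}(\theta_i\xi_i m).
\]
For $i,i'\in C$ and $\tau=(i\,i')$ we have $\tau(\theta_i\xi_i m)=\theta_{i'}\xi_{i'}\,\tau(m)=-\theta_{i'}\xi_{i'}m$, and $\mathrm{Alt}(\tau y)=-\mathrm{Alt}(y)$. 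Hence all $c+1$ summands coincide, and
\[
\mathrm{Alt}(\theta_i\xi_i m)=\tfrac{1}{c+1}\,p_{\theta\xi}\,\mathrm{Alt}(m)\in I.
\]
Since $\theta_i\xi_i$ is even, every monomial with $t\geq 1$ triples has the form $\theta_i\xi_i m$: replace one triple $\theta_i\xi_i\rho_i$ by $\rho_i$. So every alternant containing a triple lies in $I$ outright. No induction is needed, since $\mathrm{Alt}(m)$ itself need not be in $I$. There are also no ``other terms with the same $t$'': the sign-isotypic part of $S$ in each tridegree is at most one-dimensional, because the total degree $n+2t$ fixes $t$ and hence $(a,b,c)$.

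With this step filled in, your proof is complete and differs from the paper's in both halves.
\begin{itemize}
\item The paper gets the upper bound structurally. It uses the surjection $R_{\mathfrak{B}_n}^{(0,2)}\otimes R_{\mathfrak{B}_n}^{(0,1)}\twoheadrightarrow R_{\mathfrak{B}_n}^{(0,3)}$ together with Lemmas~\ref{lem:Bessenrodt-typeB} and~\ref{lem:Kronecker-typeB}. These reduce the sign multiplicity to $\sum_i [n-i+1]_{u,v}w^i$ via the hook multiplicities of Corollary~\ref{cor:typeB-hooks}, and so rest on the full $(0,2)$ computation built on Kim--Rhoades.
\item The paper's lower bound is carried out in the harmonic space, using the apolar form and polarization operators of Appendix~\ref{sec:harmonic}.
\end{itemize}
Your argument never leaves $S$ and $R_{\mathfrak{B}_n}^{(0,3)}$, and is independent of both Kim--Rhoades and harmonics. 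It also yields more: $S^{\alt}=\bigoplus_{t=0}^{n}\mathbb{S}^{(n,t,t)}(\C^3)$, with every summand for $t\geq 1$ lying in the ideal via explicit witnesses. It shows where $j=3$ is used, too. For $j\geq 4$, several triple types share a multidegree, so multiplying by $p_{\theta\xi}$ no longer isolates a single alternant; this is consistent with Remark~\ref{rem:general-j-lower-bound}. The paper's route, in exchange, needs no fermionic sign bookkeeping once the $(0,2)$ Frobenius series is known.
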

Theorem~\ref{thm:03sgn} is a type $B$ analogue of the following result in type $A$.
\begin{theorem}[\cite{lentfer2026signcharactertriagonalfermionic}]\label{thm:A03-sign}
For $n \geq 1$,
\begin{equation}
    \begin{aligned}
        \langle \Frob(R_n^{(0,3)};u,v,w), s_{(1^n)}(\z)\rangle &= s_{(n-1)}(u,v,w)+s_{(n-2,1,1)}(u,v,w),
    \end{aligned}
\end{equation}
where a Schur function indexed by a non-partition is $0$.
\end{theorem}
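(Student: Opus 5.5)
We work with the description of the sign-isotypic component through anticommuting alternants. Each variable set must appear in a sign-twisted way, and the ring has a natural $\GL_3$-symmetry, so the trigraded sign multiplicity is a polynomial $\GL_3$-character. The strategy is therefore to (i) compute the sign multiplicity of the full exterior algebra $S=\bigwedge(\C^n\otimes\C^3)$ as a $\GL_3$-character, (ii) reduce modulo the linear invariants, and (iii) determine which remaining sign-isotypic highest weight vectors survive modulo the degree $2$ and $3$ generators of Proposition~\ref{prop:Weyl-bf}. We then match the answer $s_{(n-1)}+s_{(n-2,1,1)}$ by exhibiting explicit highest weight vectors for the lower bound and a spanning argument for the upper bound.

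\emph{Step 1: reduce to the standard representation.} By Proposition~\ref{prop:Weyl-bf}, the ideal $I_n^{(0,3)}$ is generated by three kinds of polarized power sums:
\begin{itemize}
\item the linear sums $p_{1,0,0}=\sum\theta_i$, $p_{0,1,0}=\sum\xi_i$, $p_{0,0,1}=\sum\rho_i$;
\item the quadratic sums $\sum\theta_i\xi_i$, $\sum\theta_i\rho_i$, $\sum\xi_i\rho_i$;
\item the cubic sum $\sum\theta_i\xi_i\rho_i$.
\end{itemize}
Write $\C^n=\mathbb 1\oplus V$ with $V\cong V^{(n-1,1)}$. Quotienting by the linear sums gives $\bigwedge(V\otimes\C^3)$. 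By skew Howe duality this is
\[
\bigoplus_\nu \mathbb S^{\nu}(V)\otimes\mathbb S^{\nu'}(\C^3),
\]
where $\nu$ has at most $3$ columns. Hence the sign multiplicity before imposing the quadratic and cubic relations is
\[
\sum_\nu \langle \operatorname{Res}\mathbb S^\nu(V),\varepsilon\rangle\, s_{\nu'}(u,v,w).
\]
For instance, $\nu=(1^{n-1})$ gives the top wedge $\bigwedge^{n-1}V=\varepsilon$ and contributes $s_{(n-1)}$.

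\emph{Step 2: lower bound.} We produce explicit $\GL_3$-highest weight sign vectors of weights $(n-1,0,0)$ and $(n-2,1,1)$ that are nonzero in $R_n^{(0,3)}$.
\begin{itemize}
\item For $(n-1,0,0)$, take the $\theta$-alternant $\prod_{i<n}(\theta_i-\theta_n)$, which equals the product $\theta_1\cdots\theta_{n-1}$ modulo $\sum\theta_i$. Its nonvanishing in the quotient already follows from Proposition~\ref{prop:HS-one-set}, which gives the $\bigwedge(\bm\theta)$ part.
\item For $(n-2,1,1)$, take an element of the form $\sum_{k}\pm\,\xi_k\rho_k\,\theta_{[n]\setminus\{k\}}$, antisymmetrized. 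It is annihilated by the raising operators $\sum_i\theta_i\partial_{\xi_i}$ and $\sum_i\xi_i\partial_{\rho_i}$.
\end{itemize}
Nonvanishing modulo the ideal is checked in the harmonic model of Appendix~\ref{sec:harmonic}. There, sign harmonics are alternants killed by the differential operators dual to the generators, and we verify directly that the corresponding harmonic alternants exist.

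\emph{Step 3: upper bound (the main obstacle).} We must show that every other sign-isotypic highest weight vector from Step 1 lies in $I_n^{(0,3)}$. By $\GL_3$-equivariance it suffices to treat highest weight vectors of weight $\nu'=(a,b,c)$, which are alternants in $\theta,\xi,\rho$ of a specific shape.
\begin{enumerate}
\item Use the quadratic relation $\sum_i\theta_i\xi_i\equiv0$ together with the linear relations, to rewrite any alternant containing a factor $\theta_k\xi_k$ as a combination with fewer such factors. This straightening bounds the number of indices carrying two or three fermions.
\item Combine this with the total-degree bound $\le n$ forced by Step 1 for sign components. The case analysis should leave only $\nu'\in\{(n-1),(n-2,1,1)\}$, each with multiplicity one.
\end{enumerate}
I would first try to organize this as an induction on $n$, restricting to $\mathfrak S_{n-1}$ and using that the sign of $\mathfrak S_n$ restricts to the sign of $\mathfrak S_{n-1}$. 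If the straightening becomes unwieldy, the fallback is a dimension count in the harmonic space: show that the space of sign alternants of weight $\nu'$ killed by the dual operators of $\sum\theta_i\xi_i$, $\sum\theta_i\rho_i$, $\sum\xi_i\rho_i$ and $\sum\theta_i\xi_i\rho_i$ is zero except in the two surviving weights.
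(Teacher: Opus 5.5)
Both halves of your argument have genuine gaps. The paper only cites this result, but it proves the type~$B$ analogue (Theorem~\ref{thm:03sgn}) by an upper-bound/lower-bound scheme. The upper-bound half of that scheme carries over to type~$A$ verbatim, and comparing with it shows what is missing.

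\emph{Upper bound.} Your Step~3 is a plan, not a proof, and the one concrete input it relies on is false: Step~1 forces no total-degree bound $\le n$ on sign components. For instance, $\nu=(3^{n-1})$ gives $\mathbb{S}^{\nu}(V)=(\det V)^{\otimes 3}$, which restricts to $\varepsilon$ in total degree $3(n-1)$; this is the product of the $\theta$-, $\xi$- and $\rho$-alternants. More generally, you never compute the coefficients $\langle \Res\,\mathbb{S}^\nu(V),\varepsilon\rangle$, so everything rests on an unspecified straightening modulo the quadratic and cubic relations.

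The missing idea is the one in Proposition~\ref{prop:upper-bound-03}: $R_n^{(0,3)}$ is a quotient of $R_n^{(0,2)}\otimes R_n^{(0,1)}$. Using $g(\lambda,\mu,(1^n))=\delta_{\mu,\lambda'}$, the hook multiplicities of Proposition~\ref{prop:02-all-characters}, and Proposition~\ref{prop:HS-one-set}, one gets
\[
\langle \Frob(R_n^{(0,2)}\otimes R_n^{(0,1)};u,v,w),s_{(1^n)}(\z)\rangle
= [n]_{u,v}+\sum_{i=1}^{n-1} w^i\big([n-i]_{u,v}+uv\,[n-1-i]_{u,v}\big)
= h_{n-1}(u,v,w)+uvw\,h_{n-3}(u,v,w).
\]
The right-hand side is $s_{(n-1)}(u,v,w)+s_{(n-2,1,1)}(u,v,w)$, which is already the exact answer, so no straightening is needed.

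\emph{Lower bound.} Your second witness $\sum_k \pm\,\xi_k\rho_k\theta_{[n]\setminus\{k\}}$ has tridegree $(n-1,1,1)$, i.e.\ total degree $n+1$, not weight $(n-2,1,1)$. Since the statement has no sign character in total degree $n+1$, this element is in fact zero in $R_n^{(0,3)}$. Also, in $S$ the operator $\sum_i\theta_i\partial_{\xi_i}$ sends it to $\pm(\sum_k\rho_k)\,\theta_1\cdots\theta_n$, not to $0$. The nonvanishing you describe as ``verified directly'' is exactly the part that needs an argument.

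One argument that works:
\begin{enumerate}
\item The common kernel of $\sum_i\partial_{\theta_i}$, $\sum_i\partial_{\xi_i}$, $\sum_i\partial_{\rho_i}$ is $\bigwedge(V\otimes\C^3)$. Its tridegree $(n-2,1,1)$ piece is $\bigwedge^{n-2}V\otimes V\otimes V\cong\varepsilon\otimes V^{\otimes 3}$, whose sign part is one-dimensional for $n\ge 3$. Let $\eta$ span it.
\item The quadratic and cubic operators of Corollary~\ref{cor:harmonics-explicit} supercommute with the linear ones and are $\mathfrak{S}_n$-equivariant. So they send $\eta$ into the sign parts of $\bigwedge^{n-3}V\otimes V$, $\bigwedge^{n-2}V$ and $\bigwedge^{n-3}V$, all of which are zero. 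Hence $\eta\in H_n^{(0,3)}$.
\item The raising operators preserve $H_n^{(0,3)}$ (Lemma~\ref{lem:polarization-stability}) and commute with $\mathfrak{S}_n$. So they send $\eta$ into the sign parts of $\bigwedge^{n-1}V\otimes V$ and $\bigwedge^{n-2}V\otimes\bigwedge^2V$, which are also zero.
\end{enumerate}
Thus $\eta$ is a nonzero alternating harmonic highest weight vector of weight $(n-2,1,1)$.

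Avoid the tempting alternative of antisymmetrizing $\theta_1\cdots\theta_{n-2}\xi_{n-1}\rho_n$. Modulo the linear invariants, it is an alternating $n$-linear function of $e_1,\ldots,e_n$ that factors through the $(n-1)$-dimensional space $\C^n/\C(1,\ldots,1)$. Hence it already lies in $I_n^{(0,3)}$.
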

Note that the type $B$ result has only one irreducible $\GL_3$-character, while the type $A$ result has two irreducible $\GL_3$-characters.
The proof has two halves: an upper bound and a lower bound.
For the upper bound, we observe that $R_{\mathfrak{B}_n}^{(0,3)}$ is a quotient of $R_{\mathfrak{B}_n}^{(0,2)} \otimes R_{\mathfrak{B}_n}^{(0,1)}$ and compute the sign multiplicity of the latter, using a type $B$ analogue of a result of Bessenrodt together with Corollary~\ref{cor:typeB-hooks}. 
For the lower bound, we exhibit $\theta_1\cdots\theta_n$ as a $\GL_3$ highest weight vector inside the alternating part of the harmonic space $H_{\mathfrak{B}_n}^{(0,3)}$, which is isomorphic to $R_{\mathfrak{B}_n}^{(0,3)}$.

Let $g^B((\nu^{(1)},\nu^{(2)}), (\lambda^{(1)},\lambda^{(2)}), (\mu^{(1)},\mu^{(2)}))$ denote the multiplicity of the irreducible $\mathfrak{B}_n$-representation indexed by $(\nu^{(1)},\nu^{(2)})$ in the tensor product of those indexed by $(\lambda^{(1)},\lambda^{(2)})$ and $(\mu^{(1)},\mu^{(2)})$. 
This is a \emph{type $B$ analogue of the Kronecker coefficient}.
(Certain values of $g^B$ have been previously determined; see for example \cite[Section 2]{Orellana}.)
We prove the following type $B$ analogue of a result of Bessenrodt \cite{Bessenrodt}. 

\begin{lemma}\label{lem:Bessenrodt-typeB}
Let $M$ and $N$ be finite-dimensional $\mathfrak{B}_n$-modules, multigraded by $\mathbf{q}_M$ and $\mathbf{q}_N$ respectively. 
Then
\begin{equation}\begin{aligned}
    \Frob&(M \otimes N; \mathbf{q}_M,\mathbf{q}_N)\\  &= \sum_{(\nu^{(1)},\nu^{(2)}) \vdash n} s_{(\nu^{(1)},\nu^{(2)})}(\x,\y) \sum_{(\lambda^{(1)},\lambda^{(2)}), (\mu^{(1)},\mu^{(2)}) \vdash n} \langle \Frob(M; \mathbf{q}_M), s_{(\lambda^{(1)},\lambda^{(2)})} (\x,\y)\rangle\\
    &\qquad \cdot\langle \Frob(N; \mathbf{q}_N), s_{(\mu^{(1)},\mu^{(2)})} (\x,\y) \rangle g^B((\nu^{(1)},\nu^{(2)}), (\lambda^{(1)},\lambda^{(2)}), (\mu^{(1)},\mu^{(2)})).
\end{aligned}\end{equation}
\end{lemma}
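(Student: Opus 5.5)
The plan is to reduce the statement to a single multigraded component of $M\otimes N$ and then apply bilinearity of the tensor product in the Grothendieck ring of $\mathfrak{B}_n$. The proof is essentially the definition of $g^B$ together with the fact that $F^B$ is an isometry onto $\Lambda(\x)\otimes\Lambda(\y)$ with orthonormal basis $\{s_\lambda(\x)s_\mu(\y)\}$.

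First, write the multigrading of $M\otimes N$ explicitly. The component of multidegree $(\mathbf{a},\mathbf{b})$ in the variables $(\mathbf{q}_M,\mathbf{q}_N)$ is $M_{\mathbf{a}}\otimes N_{\mathbf{b}}$, with $\mathfrak{B}_n$ acting diagonally. Hence
\[
\Frob(M\otimes N;\mathbf{q}_M,\mathbf{q}_N)=\sum_{\mathbf{a},\mathbf{b}}\mathbf{q}_M^{\mathbf{a}}\mathbf{q}_N^{\mathbf{b}}\,F^B\ch(M_{\mathbf{a}}\otimes N_{\mathbf{b}}).
\]
Next, decompose each component into irreducibles:
\[
M_{\mathbf{a}}\cong\bigoplus_{(\lambda^{(1)},\lambda^{(2)})}(V^{(\lambda^{(1)},\lambda^{(2)})})^{\oplus m_{\mathbf{a}}},\qquad
N_{\mathbf{b}}\cong\bigoplus_{(\mu^{(1)},\mu^{(2)})}(V^{(\mu^{(1)},\mu^{(2)})})^{\oplus n_{\mathbf{b}}}.
\]
Since $F^B$ is an isometry and the $s_{(\lambda^{(1)},\lambda^{(2)})}(\x,\y)$ are orthonormal, the generating functions $\sum_{\mathbf{a}}m_{\mathbf{a}}\mathbf{q}_M^{\mathbf{a}}$ and $\sum_{\mathbf{b}}n_{\mathbf{b}}\mathbf{q}_N^{\mathbf{b}}$ are exactly the inner products $\langle\Frob(M;\mathbf{q}_M),s_{(\lambda^{(1)},\lambda^{(2)})}(\x,\y)\rangle$ and $\langle\Frob(N;\mathbf{q}_N),s_{(\mu^{(1)},\mu^{(2)})}(\x,\y)\rangle$.

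Then distribute the tensor product over the direct sums. Each summand $V^{(\lambda^{(1)},\lambda^{(2)})}\otimes V^{(\mu^{(1)},\mu^{(2)})}$ decomposes, by the definition of $g^B$, as
\[
\bigoplus_{(\nu^{(1)},\nu^{(2)})}\bigl(V^{(\nu^{(1)},\nu^{(2)})}\bigr)^{\oplus g^B((\nu^{(1)},\nu^{(2)}),(\lambda^{(1)},\lambda^{(2)}),(\mu^{(1)},\mu^{(2)}))}.
\]
Apply $F^B\ch$, multiply by $\mathbf{q}_M^{\mathbf{a}}\mathbf{q}_N^{\mathbf{b}}$, and sum over $\mathbf{a},\mathbf{b}$. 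The sums over $\mathbf{a}$ and $\mathbf{b}$ factor because the multiplicity $m_{\mathbf{a}}n_{\mathbf{b}}$ is a product. Reorganizing the finite sum to collect the coefficient of each $s_{(\nu^{(1)},\nu^{(2)})}(\x,\y)$ then yields the stated formula.

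No step is a genuine obstacle. The only point needing care is the first one: checking that the multigrading on $M\otimes N$ really is the tensor product grading, so that the generating series factor into the product of the two inner products.
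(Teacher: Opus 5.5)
Your proposal is correct and follows essentially the same route as the paper: decompose each multigraded piece into irreducibles, distribute the tensor product, apply the definition of $g^B$, and collect the coefficient of each $s_{(\nu^{(1)},\nu^{(2)})}(\x,\y)$. The only difference is cosmetic. The paper records multiplicities as multigraded spaces $\operatorname{Hom}_{\mathfrak{B}_n}(V^{(\lambda^{(1)},\lambda^{(2)})},M)$ rather than as numbers, and in your version $m_{\mathbf{a}}$ and $n_{\mathbf{b}}$ should also carry the bipartition index.
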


\begin{proof}
For a $\mathfrak{B}_n$-module $L$ and a bipartition $(\lambda^{(1)},\lambda^{(2)}) \vdash n$, the \emph{multiplicity space} of $V^{(\lambda^{(1)},\lambda^{(2)})}$ in $L$ is $\operatorname{Hom}_{\mathfrak{B}_n}(V^{(\lambda^{(1)},\lambda^{(2)})}, L)$. 
The evaluation map gives a natural isomorphism
\begin{equation}
L \cong \bigoplus_{(\lambda^{(1)},\lambda^{(2)}) \vdash n}
\operatorname{Hom}_{\mathfrak{B}_n}(V^{(\lambda^{(1)},\lambda^{(2)})}, L)
\otimes V^{(\lambda^{(1)},\lambda^{(2)})},
\end{equation}
where $\mathfrak{B}_n$ acts on the second tensor factor.

    Applying this multidegree by multidegree, we may decompose $M$ and $N$ into irreducibles:
    \begin{equation}
        M \cong \bigoplus_{(\lambda^{(1)},\lambda^{(2)}) \vdash n}
        M_{(\lambda^{(1)},\lambda^{(2)})} \otimes V^{(\lambda^{(1)},\lambda^{(2)})}
        \qquad \text{and} \qquad
        N \cong \bigoplus_{(\mu^{(1)},\mu^{(2)}) \vdash n}
        N_{(\mu^{(1)},\mu^{(2)})} \otimes V^{(\mu^{(1)},\mu^{(2)})},
    \end{equation}
    where the multiplicity spaces $M_{(\lambda^{(1)},\lambda^{(2)})}$ and $N_{(\mu^{(1)},\mu^{(2)})}$ are multigraded vector spaces with Hilbert series $\langle \Frob(M; \mathbf{q}_M), s_{(\lambda^{(1)},\lambda^{(2)})}(\x,\y) \rangle$ and $\langle \Frob(N; \mathbf{q}_N), s_{(\mu^{(1)},\mu^{(2)})}(\x,\y) \rangle$, respectively.
    Tensoring the two decompositions and applying the definition of $g^B$ to each $V^{(\lambda^{(1)},\lambda^{(2)})} \otimes V^{(\mu^{(1)},\mu^{(2)})}$ shows that the multiplicity space of $V^{(\nu^{(1)},\nu^{(2)})}$ in $M \otimes N$ is
    \begin{equation}
        \bigoplus_{(\lambda^{(1)},\lambda^{(2)}),\, (\mu^{(1)},\mu^{(2)}) \vdash n}
        \big(M_{(\lambda^{(1)},\lambda^{(2)})} \otimes
        N_{(\mu^{(1)},\mu^{(2)})}\big)^{\oplus\,
        g^B((\nu^{(1)},\nu^{(2)}),\, (\lambda^{(1)},\lambda^{(2)}),\,
        (\mu^{(1)},\mu^{(2)}))}.
    \end{equation}
    Taking Hilbert series of the multiplicity spaces gives the multigraded formula.
\end{proof}

The next result is the type $B$ analogue of a well known fact about Kronecker coefficients: $g(\lambda,\mu,(1^n)) = \delta_{\mu,\lambda'}$.
\begin{lemma}\label{lem:Kronecker-typeB}
For any bipartitions $(\lambda^{(1)},\lambda^{(2)})$ and $(\mu^{(1)},\mu^{(2)})$ of $n$,
    \begin{equation}
        g^B((\varnothing,(1^n)), (\lambda^{(1)},\lambda^{(2)}), (\mu^{(1)},\mu^{(2)})) = \begin{cases}
            1 \text { if } (\mu^{(1)},\mu^{(2)}) = (\lambda^{(2)'},\lambda^{(1)'}),\\
            0 \text { otherwise.}
        \end{cases}
    \end{equation}
\end{lemma}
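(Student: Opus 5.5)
The plan is to reduce the computation of $g^B((\varnothing,(1^n)),\,\cdot\,,\,\cdot\,)$ to a statement about duals and tensoring with a one-dimensional character.
Since $\varepsilon_{\mathfrak{B}_n} = V^{(\varnothing,(1^n))}$ is one-dimensional and every irreducible $\mathfrak{B}_n$-module is self-dual, we have
\begin{equation}
g^B((\varnothing,(1^n)), (\lambda^{(1)},\lambda^{(2)}), (\mu^{(1)},\mu^{(2)})) = \dim \operatorname{Hom}_{\mathfrak{B}_n}\big(\varepsilon_{\mathfrak{B}_n}, V^{(\lambda^{(1)},\lambda^{(2)})} \otimes V^{(\mu^{(1)},\mu^{(2)})}\big).
\end{equation}
Using $V^{(\lambda^{(1)},\lambda^{(2)})} \otimes V^{(\mu^{(1)},\mu^{(2)})} \cong \operatorname{Hom}(V^{(\lambda^{(1)},\lambda^{(2)})*}, V^{(\mu^{(1)},\mu^{(2)})})$, this is the dimension of $\operatorname{Hom}_{\mathfrak{B}_n}(V^{(\lambda^{(1)},\lambda^{(2)})} \otimes \varepsilon_{\mathfrak{B}_n}, V^{(\mu^{(1)},\mu^{(2)})})$, where we again use self-duality of $V^{(\lambda^{(1)},\lambda^{(2)})}$ and of $\varepsilon_{\mathfrak{B}_n}$. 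Since $V^{(\lambda^{(1)},\lambda^{(2)})} \otimes \varepsilon_{\mathfrak{B}_n}$ is irreducible, Schur's lemma shows this dimension is $1$ exactly when $V^{(\mu^{(1)},\mu^{(2)})} \cong V^{(\lambda^{(1)},\lambda^{(2)})} \otimes \varepsilon_{\mathfrak{B}_n}$, and $0$ otherwise.

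It therefore remains to identify the irreducible $V^{(\lambda^{(1)},\lambda^{(2)})} \otimes \varepsilon_{\mathfrak{B}_n}$ with $V^{(\lambda^{(2)\prime},\lambda^{(1)\prime})}$; this identification is the main step. I would argue through the Frobenius characteristic, using induced modules together with the Littlewood--Richardson rule, or more simply a determinantal (Jacobi--Trudi) argument.

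First consider the modules $\Ind_{\mathfrak{B}_{\alpha}\times\mathfrak{B}_{\beta}}^{\mathfrak{B}_n}(\mathbbm{1}\boxtimes\varepsilon)$, where $\mathfrak{B}_\alpha$ and $\mathfrak{B}_\beta$ denote Young subgroups $\prod_r \mathfrak{B}_{\alpha_r}$ and $\prod_s \mathfrak{B}_{\beta_s}$ with $|\alpha|+|\beta|=n$. Their characteristics are $h_\alpha(\x)e_\beta(\y)$. Tensoring with $\varepsilon_{\mathfrak{B}_n}$ commutes with induction, in the sense that $\Ind(W)\otimes \varepsilon \cong \Ind(W\otimes \Res \varepsilon)$. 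The restriction of $\varepsilon_{\mathfrak{B}_n}$ to each factor $\mathfrak{B}_m$ is $\varepsilon_{\mathfrak{B}_m}$, because $\det$ restricts to $\det$. It follows that $\mathbbm{1}\boxtimes\varepsilon$ becomes $\varepsilon\boxtimes\mathbbm{1}$, and the map $\omega_B\colon f(\x)g(\y)\mapsto (\omega g)(\x)(\omega f)(\y)$ on $\Lambda(\x)\otimes\Lambda(\y)$ sends $h_\alpha(\x)e_\beta(\y)$ to $h_\beta(\x)e_\alpha(\y)$. This matches the effect of tensoring with $\varepsilon$ on these induced characters, with the two blocks swapped.

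Next, the functions $h_\alpha(\x)h_\beta(\y)$ span $\Lambda(\x)\otimes\Lambda(\y)$ in each degree, and so do the products $h_\alpha(\x)e_\beta(\y)$. Tensoring with $\varepsilon$ is linear on virtual characters, so $F^B(\chi\cdot\det) = \omega_B F^B(\chi)$ for all virtual characters $\chi$. Finally,
\begin{equation}
\omega_B\big(s_{\lambda^{(1)}}(\x)s_{\lambda^{(2)}}(\y)\big) = s_{\lambda^{(2)\prime}}(\x)s_{\lambda^{(1)\prime}}(\y),
\end{equation}
which gives the claimed identification $V^{(\lambda^{(1)},\lambda^{(2)})}\otimes\varepsilon_{\mathfrak{B}_n}\cong V^{(\lambda^{(2)\prime},\lambda^{(1)\prime})}$ and hence the lemma.

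A useful sanity check is Proposition~\ref{prop:one-set}: there $\wedge^iV\otimes\varepsilon\cong\wedge^{n-i}V^*$, and this sends $((n-i),(1^i))$ to $((n-i),(1^i))$ with $i$ replaced by $n-i$, consistent with the formula. The delicate point is the claim that tensoring with $\det$ acts on Frobenius images exactly as $\omega_B$, including the swap of $\x$ and $\y$. Here the key input is that $\varepsilon_{\mathfrak{B}_m}$ has characteristic $e_m(\y)$ while $\mathbbm{1}$ has characteristic $h_m(\x)$.
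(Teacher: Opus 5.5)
Your proof is correct. The reduction is exactly the paper's argument: you rewrite $g^B((\varnothing,(1^n)),\cdot,\cdot)$ as $\dim\operatorname{Hom}_{\mathfrak{B}_n}\big(V^{(\lambda^{(1)},\lambda^{(2)})}\otimes\varepsilon_{\mathfrak{B}_n},\,V^{(\mu^{(1)},\mu^{(2)})}\big)$ using self-duality, then apply Schur's lemma. The paper phrases this with inner products of characters.

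Where you differ is the key identification $V^{(\lambda^{(1)},\lambda^{(2)})}\otimes\varepsilon_{\mathfrak{B}_n}\cong V^{(\lambda^{(2)'},\lambda^{(1)'})}$.
\begin{itemize}
\item The paper simply cites Geck--Pfeiffer (Theorem 5.5.6) for it.
\item You prove it by showing that twisting by $\det$ acts on Frobenius images as the involution $\omega_B$. You do this via the projection formula $\Ind(W)\otimes\varepsilon\cong\Ind(W\otimes\Res\varepsilon)$, applied to the induced modules whose characteristics $h_\alpha(\x)e_\beta(\y)$ span.
\end{itemize}

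Your route costs a paragraph but makes the proof self-contained. It uses only the properties of $F^B$ that the paper itself records: multiplicativity under induction, $F^B(\mathbbm{1}_{\mathfrak{B}_m})=h_m(\x)$, and $F^B(\varepsilon_{\mathfrak{B}_m})=e_m(\y)$. So it automatically matches the paper's labeling of bipartitions rather than relying on an external source's conventions. It also yields the more general identity $F^B(\chi\cdot\det)=\omega_B F^B(\chi)$ for every (graded) character.

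Two cosmetic points:
\begin{itemize}
\item Your first displayed equality is just the definition of $g^B$; no self-duality is needed there.
\item The Littlewood--Richardson or Jacobi--Trudi argument you announce is never used. The spanning argument, together with injectivity of $F^B$ at the end, already closes the proof.
\end{itemize}
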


\begin{proof}
    Recall that $V^{(\lambda^{(1)},\lambda^{(2)})} \otimes V^{(\varnothing,(1^n))} \cong V^{(\lambda^{(2)'},\lambda^{(1)'})}$ \cite[Theorem 5.5.6]{GeckPfeiffer}. Then we write that
    \begin{equation}\begin{aligned}
        \langle V^{(\varnothing,(1^n))} ,V^{(\lambda^{(1)},\lambda^{(2)})} \otimes V^{(\mu^{(1)},\mu^{(2)})} \rangle&= \langle {V^{(\lambda^{(1)},\lambda^{(2)})}}^* \otimes V^{(\varnothing,(1^n))} , V^{(\mu^{(1)},\mu^{(2)})} \rangle\\
        &= \langle {V^{(\lambda^{(1)},\lambda^{(2)})}} \otimes V^{(\varnothing,(1^n))} , V^{(\mu^{(1)},\mu^{(2)})} \rangle\\
        &= \langle {V^{(\lambda^{(2)'},\lambda^{(1)'})}}, V^{(\mu^{(1)},\mu^{(2)})} \rangle.
    \end{aligned}\end{equation}
    Here we used that every irreducible $\mathfrak{B}_n$-module $V$ is isomorphic to its dual $V^*$.
\end{proof}

We prove an upper bound on the multiplicity of the sign character in $R_{\mathfrak{B}_n}^{(0,3)}$.

\begin{proposition}\label{prop:upper-bound-03}
For $n \geq 1$ we have, coefficient-wise in the basis $\{s_\lambda(\x)s_\mu(\y)\}$ and in the monomials in $u,v,w$,    \begin{equation}
        \langle \Frob(R_{\mathfrak{B}_n}^{(0,3)};u,v,w),s_{\varnothing}(\x) s_{(1^n)}(\y) \rangle \leq s_{(n)}(u,v,w).
    \end{equation}
\end{proposition}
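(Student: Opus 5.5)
The plan follows the outline at the start of this section. First, exhibit $R_{\mathfrak{B}_n}^{(0,3)}$ as a trigraded $\mathfrak{B}_n$-equivariant quotient of $R_{\mathfrak{B}_n}^{(0,2)} \otimes R_{\mathfrak{B}_n}^{(0,1)}$. Let $S' = \C[\bm{\theta},\bm{\xi}]$ and $S'' = \C[\bm{\rho}]$. The full superalgebra $S = \C[\bm{\theta},\bm{\xi},\bm{\rho}]$ is the (graded-commutative) tensor product $S' \otimes S''$, and this identification is $\mathfrak{B}_n$-equivariant and trigraded. The invariants $(S')^{\mathfrak{B}_n}_+$ and $(S'')^{\mathfrak{B}_n}_+$ are contained in $S^{\mathfrak{B}_n}_+$. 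Hence the ideal $I_{\mathfrak{B}_n}^{(0,2)}\otimes S'' + S' \otimes I_{\mathfrak{B}_n}^{(0,1)}$ is contained in $I_{\mathfrak{B}_n}^{(0,3)}$. This gives a surjection of trigraded $\mathfrak{B}_n$-modules
\[
R_{\mathfrak{B}_n}^{(0,2)} \otimes R_{\mathfrak{B}_n}^{(0,1)} \twoheadrightarrow R_{\mathfrak{B}_n}^{(0,3)}.
\]
By complete reducibility, each isotypic multiplicity of the target is bounded coefficientwise by that of the source. The $\bm{\rho}$-grading is tracked by $w$, and $R^{(0,1)}_{\mathfrak{B}_n}$ is taken with variable $w$ in Proposition~\ref{prop:one-set}.

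Next, compute the sign multiplicity of the tensor product using Lemma~\ref{lem:Bessenrodt-typeB} together with Lemma~\ref{lem:Kronecker-typeB}. The coefficient of $s_\varnothing(\x)s_{(1^n)}(\y)$ is
\[
\sum_{(\lambda^{(1)},\lambda^{(2)})\vdash n} \langle \Frob(R_{\mathfrak{B}_n}^{(0,2)};u,v), s_{(\lambda^{(1)},\lambda^{(2)})}\rangle \,\langle \Frob(R_{\mathfrak{B}_n}^{(0,1)};w), s_{(\lambda^{(2)'},\lambda^{(1)'})}\rangle .
\]
By Proposition~\ref{prop:one-set}, the only bipartitions appearing in $R_{\mathfrak{B}_n}^{(0,1)}$ are $((n-i),(1^i))$, each with coefficient $w^i$. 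The matching condition $(\lambda^{(2)'},\lambda^{(1)'}) = ((n-i),(1^i))$ then forces $\lambda^{(1)} = (i)$ and $\lambda^{(2)} = (1^{n-i})$. By Corollary~\ref{cor:typeB-hooks}, the multiplicity of $((i),(1^{n-i}))$ in $R^{(0,2)}_{\mathfrak{B}_n}$ is $[n-i+1]_{u,v} = h_{n-i}(u,v)$. The sign multiplicity of the tensor product is therefore
\[
\sum_{i=0}^n h_{n-i}(u,v)\, w^i = h_n(u,v,w) = s_{(n)}(u,v,w).
\]

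Combining the two steps gives the claimed coefficientwise inequality. The main point requiring care is the first step. One must check that the surjection is genuinely $\mathfrak{B}_n$-equivariant and compatible with the trigrading, despite the sign conventions from anticommuting variables in the super tensor product. This holds because $\mathfrak{B}_n$ acts by even (degree-preserving) automorphisms, so no extra signs enter the group action. One must also check that the multiplicity comparison is done multidegree by multidegree, which is immediate since all maps are trigraded. Everything else reduces to the cited lemmas and the identity $\sum_i h_{n-i}(u,v)w^i = h_n(u,v,w)$.
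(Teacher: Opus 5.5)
Your proposal is correct and follows the same route as the paper. The paper also uses the surjection $R_{\mathfrak{B}_n}^{(0,2)} \otimes R_{\mathfrak{B}_n}^{(0,1)} \twoheadrightarrow R_{\mathfrak{B}_n}^{(0,3)}$, noting that the source is $\C[\bm{\theta}_n,\bm{\xi}_n,\bm{\rho}_n]/\langle \sum_i \theta_i\xi_i\rangle$, and then evaluates the sign multiplicity via Lemmas~\ref{lem:Bessenrodt-typeB} and~\ref{lem:Kronecker-typeB}, Corollary~\ref{cor:typeB-hooks} and Proposition~\ref{prop:one-set} as $\sum_{i=0}^n [n-i+1]_{u,v}\,w^i = s_{(n)}(u,v,w)$; your explicit matching $(\lambda^{(1)},\lambda^{(2)}) = ((i),(1^{n-i}))$ simply spells out a step the paper leaves implicit.
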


\begin{proof}
By Proposition~\ref{prop:Weyl-B_n-bf}, $I_{\mathfrak{B}_n}^{(0,2)} = \langle \sum_{i=1}^n \theta_i\xi_i \rangle$ and $I_{\mathfrak{B}_n}^{(0,1)} = \{0\}$, so 
\begin{equation}
    R_{\mathfrak{B}_n}^{(0,2)} \otimes R_{\mathfrak{B}_n}^{(0,1)} = \C[\bm{\theta}_n,\bm{\xi}_n,\bm{\rho}_n]/\left\langle \sum_{i=1}^n \theta_i\xi_i\right\rangle.
\end{equation}
Since $I_{\mathfrak{B}_n}^{(0,3)} \supseteq \langle \sum_{i=1}^n \theta_i\xi_i\rangle$, the quotient map $\C[\bm{\theta}_n,\bm{\xi}_n,\bm{\rho}_n] \twoheadrightarrow R_{\mathfrak{B}_n}^{(0,3)}$ factors through $R_{\mathfrak{B}_n}^{(0,2)} \otimes R_{\mathfrak{B}_n}^{(0,1)}$, giving a surjection of trigraded $\mathfrak{B}_n$-modules $R_{\mathfrak{B}_n}^{(0,2)} \otimes R_{\mathfrak{B}_n}^{(0,1)} \twoheadrightarrow R_{\mathfrak{B}_n}^{(0,3)}$.
The surjection gives
\begin{equation}
    \Frob(R_{\mathfrak{B}_n}^{(0,3)}; u,v,w) \leq \Frob(R_{\mathfrak{B}_n}^{(0,2)} \otimes R_{\mathfrak{B}_n}^{(0,1)}; u,v,w).
\end{equation}
By extracting the coefficient of the sign character, we obtain
\begin{equation}
    \langle \Frob(R_{\mathfrak{B}_n}^{(0,3)}; u,v,w), s_{\varnothing}(\x) s_{(1^n)}(\y) \rangle \leq \langle \Frob(R_{\mathfrak{B}_n}^{(0,2)} \otimes R_{\mathfrak{B}_n}^{(0,1)}; u,v,w), s_{\varnothing}(\x) s_{(1^n)}(\y)\rangle.
\end{equation}

By Lemma~\ref{lem:Bessenrodt-typeB} and Lemma~\ref{lem:Kronecker-typeB}, we have
\begin{equation}\begin{aligned}
    \langle &\Frob(R_{\mathfrak{B}_n}^{(0,2)} \otimes R_{\mathfrak{B}_n}^{(0,1)}; u,v,w), s_{\varnothing}(\x) s_{(1^n)}(\y)\rangle\\ 
    &= \sum_{(\lambda^{(1)},\lambda^{(2)}) \vdash n} \langle \Frob(R_{\mathfrak{B}_n}^{(0,2)}; u,v), s_{(\lambda^{(1)},\lambda^{(2)})} \rangle \langle \Frob(R_{\mathfrak{B}_n}^{(0,1)}; w), s_{(\lambda^{(2)'},\lambda^{(1)'})} \rangle\\
    &= \sum_{i=0}^n [n-i+1]_{u,v}w^i\\
    &= s_{(n)}(u,v,w),
\end{aligned}\end{equation}
where we applied Corollary~\ref{cor:typeB-hooks} and Proposition~\ref{prop:one-set}.
\end{proof}

Next, we work towards the lower bound.
Recall $\det(g)$ denotes the determinant character of $g \in \mathfrak{B}_n$.
For any $\mathfrak{B}_n$-module $M$, define the \emph{alternating subspace} of $M$ by
\begin{equation}
    M^{\alt} = \{v \in M \, | \, g \cdot v = \det(g) v \text{ for all } g \in \mathfrak{B}_n \}.
\end{equation}

\begin{lemma}\label{lem:alt}
Let $M$ be a $\mathfrak{B}_n$-module, which is multigraded such that the grading is preserved under the action of $\mathfrak{B}_n$. 
Then
 \begin{equation}
     \langle \Frob(M; \mathbf{q}) , s_{\varnothing}(\x) s_{(1^n)}(\y) \rangle = \Hilb(M^{\alt};  \mathbf{q}).
 \end{equation}
\end{lemma}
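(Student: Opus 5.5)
The plan is to reduce to a single multidegree and then use the standard isotypic projection for a finite group. Since the $\mathfrak{B}_n$-action preserves the multigrading, $M = \bigoplus_{\mathbf{r}} M_{\mathbf{r}}$ is a decomposition into $\mathfrak{B}_n$-submodules. Taking alternating subspaces commutes with this decomposition: if $v = \sum_{\mathbf{r}} v_{\mathbf{r}}$ satisfies $g\cdot v = \det(g) v$, then comparing homogeneous components gives $g \cdot v_{\mathbf{r}} = \det(g) v_{\mathbf{r}}$ for each $\mathbf{r}$. Hence $M^{\alt} = \bigoplus_{\mathbf{r}} (M_{\mathbf{r}})^{\alt}$, and $\Hilb(M^{\alt};\mathbf{q}) = \sum_{\mathbf{r}} \dim (M_{\mathbf{r}})^{\alt}\, \mathbf{q}^{\mathbf{r}}$. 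Likewise, by definition of the multigraded Frobenius series and linearity of the Hall inner product,
\[
\langle \Frob(M;\mathbf{q}), s_{\varnothing}(\x)s_{(1^n)}(\y)\rangle = \sum_{\mathbf{r}} \langle F^B \ch(M_{\mathbf{r}}), s_{\varnothing}(\x)s_{(1^n)}(\y)\rangle\, \mathbf{q}^{\mathbf{r}}.
\]
(If the grading is infinite, as for polynomial-type modules, everything is coefficientwise, and each $M_{\mathbf{r}}$ is finite-dimensional.)

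It therefore suffices to show, for a finite-dimensional $\mathfrak{B}_n$-module $N$, that $\langle F^B\ch(N), s_{\varnothing}(\x)s_{(1^n)}(\y)\rangle = \dim N^{\alt}$. Decompose $N \cong \bigoplus_{(\lambda,\mu)} (V^{(\lambda,\mu)})^{\oplus c_{\lambda,\mu}}$ by complete reducibility over $\C$. Since $F^B$ is an isometry sending $\chi^{(\lambda,\mu)}$ to the orthonormal basis element $s_\lambda(\x)s_\mu(\y)$, the left side equals $c_{\varnothing,(1^n)}$, the multiplicity of the determinant representation $V^{(\varnothing,(1^n))} \cong \varepsilon_{\mathfrak{B}_n}$.

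On the other hand, $N^{\alt} = \operatorname{Hom}_{\mathfrak{B}_n}(\varepsilon_{\mathfrak{B}_n}, N)$, via $\phi \mapsto \phi(1)$. By Schur's lemma its dimension is $c_{\varnothing,(1^n)}$. Alternatively, $N^{\alt}$ is exactly the $\varepsilon$-isotypic component, because $\varepsilon$ is one-dimensional, so every vector in the isotypic component transforms by $\det$. Its dimension is $c_{\varnothing,(1^n)} \cdot \dim \varepsilon = c_{\varnothing,(1^n)}$.

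Combining the two computations and summing over $\mathbf{r}$ proves the lemma. There is no real obstacle; the only point needing care is the first step, checking that the alternating subspace is compatible with the grading so the statement reduces to each graded piece.
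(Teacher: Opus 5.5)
Your proof is correct and follows essentially the same route as the paper. Both arguments identify $M^{\alt}$ with the sign-isotypic component: the paper uses the antisymmetrizing idempotent $Y=\frac{1}{2^n n!}\sum_{g}\det(g)\,g$, and you use $\operatorname{Hom}_{\mathfrak{B}_n}(\varepsilon_{\mathfrak{B}_n},N)$ and Schur's lemma. Both then use that $\varepsilon_{\mathfrak{B}_n}$ is one-dimensional and work multidegree by multidegree, and your explicit check that $M^{\alt}=\bigoplus_{\mathbf{r}}(M_{\mathbf{r}})^{\alt}$ just spells out the paper's remark about keeping track of the grading.
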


\begin{proof}
    The antisymmetrizing operator
    \begin{equation}
        Y = \frac{1}{2^n n!}\sum_{g \in \mathfrak{B}_n} \det(g) g
    \end{equation}
    is an idempotent in the group algebra $\C[\mathfrak{B}_n]$, which projects $M$ onto $M^{\alt}$. 
    Since the sign character is 1-dimensional, the sign-isotypic component of $M$ has dimension equal to the multiplicity of the sign character in the Frobenius series. 
    Keeping track of the grading, which is preserved by the action of $\mathfrak{B}_n$, gives the result.
\end{proof}

We will establish the sign character of $R_{\mathfrak{B}_n}^{(0,3)}$, making use of the harmonic space $H_{\mathfrak{B}_n}^{(0,3)}$, discussed at length in Appendix~\ref{sec:harmonic}. 
We will use the following characterization of the harmonic space $H_{\mathfrak{B}_n}^{(0,3)}$, which is a special case of Corollary~\ref{cor:harmonics-explicit}.
\begin{proposition}\label{prop:B03-harmonics}
The \emph{triagonal fermionic type $B$ harmonics} are given by
\begin{equation}\label{eq:simplified-harmonics} 
H_{\mathfrak{B}_n}^{(0,3)} = \left\{ f \in \C[\bm{\theta}_n, \bm{\xi}_n, \bm{\rho}_n]\, \Bigg| \, \sum_{i=1}^n \partial_{\theta_i}\partial_{\xi_i} f = \sum_{i=1}^n \partial_{\theta_i}\partial_{\rho_i} f= \sum_{i=1}^n \partial_{\xi_i}\partial_{\rho_i} f =0\right\}.
\end{equation}
\end{proposition}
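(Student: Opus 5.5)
The plan is to start from the definition of the harmonic space in Appendix~\ref{sec:harmonic}: $H_{\mathfrak{B}_n}^{(0,3)}$ is the orthogonal complement of the defining ideal $I_{\mathfrak{B}_n}^{(0,3)}$ under the apolarity pairing on $\C[\bm{\theta}_n,\bm{\xi}_n,\bm{\rho}_n]$. Equivalently, it is the set of $f$ with $g(\partial)f=0$ for every $g \in I_{\mathfrak{B}_n}^{(0,3)}$, where each fermionic variable is replaced by the corresponding (odd) partial derivative. I will then reduce the infinitely many conditions $g(\partial)f=0$ to conditions coming from generators of the ideal, and identify those generators using Proposition~\ref{prop:Weyl-B_n-bf}.

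\emph{Identifying the generators.} With $(k,j)=(0,3)$, every exponent $s_1,s_2,s_3$ lies in $\{0,1\}$, so the total degree $s_1+s_2+s_3$ is at most $3$. The requirement that it be even and at least $2$ forces it to equal $2$. Hence $I_{\mathfrak{B}_n}^{(0,3)}$ is generated by the three polarized power sums
\begin{equation*}
p_{;1,1,0}=\sum_i\theta_i\xi_i,\qquad p_{;1,0,1}=\sum_i\theta_i\rho_i,\qquad p_{;0,1,1}=\sum_i\xi_i\rho_i .
\end{equation*}
(For $n\ge1$ these are all in range, since $2\le 2n$.) Under $\theta_i\mapsto\partial_{\theta_i}$ and so on, they become exactly the three operators in \eqref{eq:simplified-harmonics}, up to a global sign convention for ordering products of odd derivatives. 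That sign is irrelevant for the vanishing conditions.

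\emph{Reducing from the ideal to its generators.} An element of the ideal is a sum of terms $h\cdot p$ with $p$ one of the generators and $h$ a monomial. The substitution $g\mapsto g(\partial)$ is an (anti)homomorphism of superalgebras: $(hp)(\partial)=\pm h(\partial)\,p(\partial)$, or the reverse order depending on the convention, with a sign determined by parities. Since each $p$ is even, either order works, because $p(\partial)$ commutes with $h(\partial)$ up to a sign. Therefore if $p(\partial)f=0$ for the three generators, then $g(\partial)f=0$ for every $g$ in the ideal. The converse holds trivially because the generators lie in the ideal. This gives the set equality.

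The main (mild) obstacle is bookkeeping. I need to fix the apolarity pairing and the order and sign conventions for odd derivatives consistently with Appendix~\ref{sec:harmonic}, so that $g(\partial)f=0$ for all $g\in I$ is genuinely equivalent to orthogonality to $I$ and the multiplicativity $(hp)(\partial)=\pm h(\partial)p(\partial)$ holds. Since the statement is presented as a special case of Corollary~\ref{cor:harmonics-explicit}, I would first try to simply invoke that corollary with $(k,j)=(0,3)$ and check that the generator list collapses to the three quadrics as above.
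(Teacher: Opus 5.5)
Your proposal is correct and follows the paper's route: the paper obtains this statement by specializing Corollary~\ref{cor:harmonics-explicit} to $(k,j)=(0,3)$, where the constraints $s_t\in\{0,1\}$ and $2\le s_1+s_2+s_3\le 2n$ with even sum leave exactly the three quadrics. Their operators $p(\partial)$ are precisely the displayed ones, with no extra sign, by equation~\eqref{eq:pps-operator}. The step you assert as ``equivalently'' (orthogonality to $I$ versus $g(\partial)f=0$ for all $g\in I$) is (i)$\Leftrightarrow$(ii) of Theorem~\ref{thm:harmonics-pde}, which uses the ideal property and nondegeneracy of the apolar form, combined with the stability of $I$ under conjugating coefficients; your reduction to generators is that theorem's (iv)$\Rightarrow$(ii).
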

We will also use that $H_{\mathfrak{B}_n}^{(0,3)} \cong R_{\mathfrak{B}_n}^{(0,3)}$ as trigraded $\mathfrak{B}_n$-modules (by Theorem~\ref{thm:harmonics-iso}).

\begin{proof}[Proof of Theorem~\ref{thm:03sgn}]
    By Proposition~\ref{prop:upper-bound-03}, it only remains to establish the lower bound, which we do on the harmonics side.
    Consider the element $\theta_1\cdots \theta_n$, of which we record three properties.

    First, $\theta_1\cdots \theta_n$ is $\mathfrak{B}_n$-antisymmetric and nonzero: for $g = (\epsilon, \sigma) \in \mathfrak{B}_n$, where $\sigma \in \mathfrak{S}_n$ and $\epsilon = (\epsilon_1,\ldots,\epsilon_n) \in \{-1,+1\}^n$, we have $g \cdot \theta_i = \epsilon_i \theta_{\sigma (i)}$, so
    \begin{equation}
        g \cdot (\theta_1\cdots \theta_n) = \prod_{i=1}^n \epsilon_i \theta_{\sigma (i)} = \left(\prod_{i=1}^n \epsilon_i\right) \sgn(\sigma)\, \theta_1\cdots \theta_n = \det(g)\, \theta_1\cdots \theta_n,
    \end{equation}
    where we used that $\sgn(\sigma)\sigma(\theta_1\cdots \theta_{n}) = \theta_1\cdots \theta_{n}$ (see for example \cite[Chapter III, Section 7.3, Proposition 5]{BourbakiAlgebraI}) and $\det(g) = \sgn(\sigma)\prod_{i=1}^n \epsilon_i$.

    Second, since no variable from $\bm{\xi}_n$ or $\bm{\rho}_n$ occurs in $\theta_1\cdots \theta_n$, every operator that differentiates with respect to some $\xi_i$ or $\rho_i$ annihilates it.
    In particular, the three operators of Proposition~\ref{prop:B03-harmonics} annihilate it, so $\theta_1\cdots \theta_n \in H_{\mathfrak{B}_n}^{(0,3)}$.

    Third, $\theta_1\cdots \theta_n$ is a $\GL_3$ highest weight vector of weight $(n,0,0)$: the $\GL_3$-weight of a trihomogeneous vector is its tridegree in $(\bm{\theta}_n,\bm{\xi}_n,\bm{\rho}_n)$, and, with respect to the ordering $\bm{\theta}_n > \bm{\xi}_n > \bm{\rho}_n$, the raising operators $\sum_{i=1}^n \theta_i \partial_{\xi_i}$ and $\sum_{i=1}^n \xi_i \partial_{\rho_i}$ (instances of the polarization operators of equation~\eqref{eq:superderivations}) annihilate it.

By Lemmas~\ref{lem:polarization-commute} and~\ref{lem:polarization-stability}, $H_{\mathfrak{B}_n}^{(0,3)}$ is a $\GL_3 \times \mathfrak{B}_n$-module, and the polarization operators preserve the alternating subspace $(H_{\mathfrak{B}_n}^{(0,3)})^{\alt}$, which contains $\theta_1\cdots\theta_n$.
Since $\C[\bm{\theta}_n,\bm{\xi}_n,\bm{\rho}_n]$ is a semisimple $\GL_3$-representation, the $\GL_3$-submodule generated by $\theta_1\cdots\theta_n$ is the irreducible representation $\mathbb{S}^{(n)}(\C^3)$ of highest weight $(n,0,0)$; it is contained in $(H_{\mathfrak{B}_n}^{(0,3)})^{\alt}$, and its Hilbert series is the character $s_{(n)}(u,v,w)$.
By Theorem~\ref{thm:harmonics-iso} and Lemma~\ref{lem:alt}, we conclude that
    \begin{equation}
        \langle \Frob(R_{\mathfrak{B}_n}^{(0,3)}; u,v,w), s_{\varnothing}(\x) s_{(1^n)}(\y)\rangle = \Hilb((H_{\mathfrak{B}_n}^{(0,3)})^{\alt};u,v,w) \geq s_{(n)}(u,v,w).
    \end{equation}
    Combined with Proposition~\ref{prop:upper-bound-03}, this gives the claimed equality.
\end{proof}

\begin{remark}\label{rem:general-j-lower-bound}
    For all $n \geq 1$ and $j \geq 1$, the lower bound argument can easily be generalized to, coefficient-wise,
    \begin{equation}
        \langle \Frob(R_{\mathfrak{B}_n}^{(0,j)}; u_1,\ldots,u_j), s_{\varnothing}(\x) s_{(1^n)}(\y)\rangle \geq s_{(n)}(u_1,\ldots,u_j).
    \end{equation}
    However, for $j \geq 4$, additional terms start to appear, so the bound is not generally tight.
\end{remark}

\section{Three explicit characters}\label{sec:standard}

The \emph{standard character} of the symmetric group $\mathfrak{S}_n$ is the irreducible character indexed by the partition $(n-1,1)$. 
In this section we determine its multigraded multiplicity in $R_n^{(k,j)}$ for all $(k,j)$ (Theorem~\ref{thm:standard-character}) 
as well as the multigraded multiplicities of the $\mathfrak{B}_n$-characters $\chi^{((n-1),(1))}$ and $\chi^{((n-1,1), \varnothing)}$ in $R_{\mathfrak{B}_n}^{(k,j)}$ (Theorem~\ref{thm:standard-character-B_n}).
To our knowledge, these are the first characters, aside from the trivial character, to be determined in $R_n^{(k,j)}$ and $R_{\mathfrak{B}_n}^{(k,j)}$ for all $(k,j)$.

\subsection{Shared tools for types \texorpdfstring{$A$}{A} and \texorpdfstring{$B$}{B}} 
Define a \emph{super Schur function} (\cite{BereleRegev}; also see \cite[Appendix A.2.2]{ChengWangBook}) by 
\begin{equation}
    s_\lambda(\mathbf{q}/\mathbf{u}) = \sum_{\nu \subseteq \lambda} s_\nu(\mathbf{q}) s_{\lambda'/\nu'}(\mathbf{u}).
\end{equation}
The super Schur function $s_\lambda(\mathbf{q}/\mathbf{u})$ is homogeneous of degree $|\lambda|$, and vanishes in $k$ bosonic and $j$ fermionic variables precisely when $\lambda_{k+1} > j$ \cite{BereleRegev}.
In particular $s_\lambda(\mathbf{q}/\varnothing) = s_\lambda(\mathbf{q})$ and $s_\lambda(\varnothing/\mathbf{u}) = s_{\lambda'}(\mathbf{u})$.

We recall the following result from \cite{Lentfer-Supersymmetry}, which was conjectured in \cite[Conjecture 1]{Bergeron2020}.
Denote the set of partitions $\lambda$ with length $\ell(\lambda) \leq n$ which satisfy $\lambda_{k+1} \leq j$ by $P(k,j,n)$.

\begin{theorem}[{\cite[Corollary 1.3]{Lentfer-Supersymmetry}}]\label{thm:DSS}
    Fix a positive integer $n$. 
    For partitions $\lambda$ with $\ell(\lambda) \leq n$, and $\mu \vdash n$, there exist nonnegative integer coefficients $c_{\lambda,\mu}$ such that for any $(k,j)$, the multigraded Frobenius series of $R_n^{(k,j)}$ is
\begin{equation}\label{eq:DSS}
    \Frob(R_n^{(k,j)}; \mathbf{q};\mathbf{u}) = \sum_{\lambda \in P(k,j,n)} \sum_{\mu \vdash n} c_{\lambda,\mu} s_\lambda(\mathbf{q}/\mathbf{u})s_\mu(\z).
\end{equation}
\end{theorem}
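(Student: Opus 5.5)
The plan is to realize all the rings $R_n^{(k,j)}$ at once as the values of a single polynomial functor, evaluated on the super vector space $W = \C^{k|j}$. Concretely, $S = \C[\bm{x}^{(1)},\ldots,\bm{x}^{(k)},\bm{\theta}^{(1)},\ldots,\bm{\theta}^{(j)}] \cong \Sym(\C^n \otimes W)$, where $\Sym$ is taken in the super sense: it is symmetric on the even part and exterior on the odd part. The diagonal $\mathfrak{S}_n$-action is the action on the factor $\C^n$.

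First, I would show that $W \mapsto R(W) := \Sym(\C^n\otimes W)/\langle \Sym(\C^n\otimes W)^{\mathfrak{S}_n}_+\rangle$ is a functor on super vector spaces with even linear maps. Any even linear map $\phi: W \to W'$ induces an algebra map $\Sym(\C^n\otimes W)\to \Sym(\C^n\otimes W')$. This map commutes with $\mathfrak{S}_n$ and preserves the degree, so it carries $\mathfrak{S}_n$-invariants without constant term to $\mathfrak{S}_n$-invariants without constant term. Hence it carries the ideal into the ideal and descends to the quotients. In particular, $\mathfrak{gl}(k|j)$ acts on $R_n^{(k,j)}$ by the polarization superderivations, commuting with $\mathfrak{S}_n$, and the multigrading is the torus weight.

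Second, I would decompose the ambient algebra by super Schur--Weyl / $(\GL_n,\GL(W))$-Howe duality (Berele--Regev, Sergeev):
\begin{equation*}
\Sym(\C^n\otimes W)\cong\bigoplus_{\lambda}\mathbb{S}^\lambda(\C^n)\otimes\mathbb{S}^\lambda(W).
\end{equation*}
In this decomposition $\mathbb{S}^\lambda(\C^n)=0$ unless $\ell(\lambda)\le n$. Moreover, the character of $\mathbb{S}^\lambda(\C^{k|j})$ is $s_\lambda(\mathbf{q}/\mathbf{u})$, and it is nonzero exactly when $\lambda\in P(k,j,n)$. Restricting the $\GL_n$-factor to $\mathfrak{S}_n$ shows that $S$ is a polynomial functor in $W$ whose coefficients are $\mathfrak{S}_n$-modules.

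Third, the defining ideal $I(W)$ is a subfunctor by the first step. Since polynomial functors over $\C$ form a semisimple category, both $I$ and the quotient $R$ decompose as $R(W)\cong\bigoplus_\lambda M_\lambda\otimes \mathbb{S}^\lambda(W)$. Here the $M_\lambda$ are $\mathfrak{S}_n$-modules independent of $W$, and $M_\lambda$ occurs only for $\ell(\lambda)\le n$. Setting $c_{\lambda,\mu}=\langle F\ch(M_\lambda), s_\mu\rangle$ and taking multigraded characters yields \eqref{eq:DSS}, with the terms for $\lambda\notin P(k,j,n)$ vanishing automatically.

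The main obstacle is making the third step rigorous in the super setting. The category of polynomial functors on super vector spaces must be semisimple, and evaluating at $\C^{k|j}$ must send $\mathbb{S}^\lambda$ to the irreducible module with character $s_\lambda(\mathbf{q}/\mathbf{u})$ (or to zero). My first approach would be to pass through the equivalence with $\mathfrak{S}_d$-modules, $F\mapsto F(\C^{N|0})$ multilinear part for $N\ge d$, degree by degree. The fact needed is that the ideal, in each degree $d$, is the image of an $\mathfrak{S}_d$-equivariant subspace of $(\C^n)^{\otimes d}$ tensored with $W^{\otimes d}$. This follows because the invariants are spanned by polarized power sums (Proposition~\ref{prop:Weyl-bf}), which are images of fixed tensors, and the ideal is generated by them functorially.
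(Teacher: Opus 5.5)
Your argument works once the fact in your last paragraph is made the main step rather than a repair. It takes a genuinely different route from the one the paper relies on.

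\textbf{The paper's route.} The paper does not reprove this theorem; it imports it from \cite{Lentfer-Supersymmetry}. There, as recalled in Lemma~\ref{lem:polarization-stability}(a) and in the proof of Corollary~\ref{cor:multiplicity-free}, the result comes from a $\mathfrak{gl}(k|j)\times G$-module structure on the ring. The polarization superderivations, odd ones included, preserve $S_+^G$ and hence the ideal. So $R_G^{(k,j)}$ is a polynomial $\mathfrak{gl}(k|j)$-module and splits into simples $U^\nu_{k|j}$ with character $s_\nu(\mathbf{q}/\mathbf{u})$ and multiplicities independent of $(k,j)$.

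\textbf{Your route.} You work degree by degree on tensors. Let $\pi_W\colon (\C^n)^{\otimes d}\otimes W^{\otimes d}\to S_d(W)$ be the Koszul-signed symmetrization. Since $\pi_W$ is $\mathfrak{S}_n$-equivariant, $S_e(W)^{\mathfrak{S}_n}=\pi_W\big(((\C^n)^{\otimes e})^{\mathfrak{S}_n}\otimes W^{\otimes e}\big)$. Hence $I_d(W)=\pi_W(J_d\otimes W^{\otimes d})$, where $J_d$ is the $\mathfrak{S}_d$-span of $\sum_{e\geq 1}(\C^n)^{\otimes(d-e)}\otimes((\C^n)^{\otimes e})^{\mathfrak{S}_n}$. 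Super Schur--Weyl duality then gives
\[
R_d(W)\cong\big(((\C^n)^{\otimes d}/J_d)\otimes W^{\otimes d}\big)_{\mathfrak{S}_d}\cong\bigoplus_{\lambda\vdash d}\operatorname{Hom}_{\mathfrak{S}_d}\big(V^\lambda,(\C^n)^{\otimes d}/J_d\big)\otimes\mathbb{S}^\lambda(W).
\]
This shows you do not need Proposition~\ref{prop:Weyl-bf}. That proposition says the polarized power sums generate the ideal, not that they span the invariants.

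\textbf{What each route buys.} On your route:
\begin{itemize}
\item independence of $(k,j)$ is automatic rather than something to prove;
\item $c_{\lambda,\mu}$ is identified as the multiplicity of $V^\lambda\boxtimes V^\mu$ in the $\mathfrak{S}_d\times\mathfrak{S}_n$-module $(\C^n)^{\otimes d}/J_d$;
\item the vanishing for $\ell(\lambda)>n$ is visible from $\operatorname{Hom}_{\mathfrak{S}_d}(V^\lambda,(\C^n)^{\otimes d})\cong\mathbb{S}^\lambda(\C^n)$;
\item since only torus characters are at stake, you need just the Berele--Regev character of $\operatorname{Hom}_{\mathfrak{S}_d}(V^\lambda,W^{\otimes d})$, and no complete reducibility for $\mathfrak{gl}(k|j)$;
\item replacing $\mathfrak{S}_n$ by any finite $G\subseteq\GL_n$ gives Theorem~\ref{thm:DSS-B_n} verbatim.
\end{itemize}
The cited route instead puts the polarization action on the ring itself at the centre. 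That is the structure the paper exploits elsewhere (Proposition~\ref{prop:parabolic-invariants}, Theorem~\ref{thm:03sgn}).

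\textbf{Steps that do not hold as stated.} Two steps before your last paragraph fail as written, which is why that paragraph must carry the proof.
\begin{itemize}
\item Functoriality under even linear maps yields only $\GL_k\times\GL_j$. The odd polarization operators do not come from your first step. They do preserve the ideal, by Lemma~\ref{lem:polarization-stability}(a), but not because of functoriality.
\item More seriously, polynomial functors on super vector spaces with even morphisms do form a semisimple category, but its simple objects are $\mathbb{S}^\alpha(W_{\bar 0})\otimes\mathbb{S}^\beta(W_{\bar 1})$, not $\mathbb{S}^\lambda(W)$. For instance, $W\mapsto W_{\bar 0}$ is a subfunctor of $W\mapsto W$.
\end{itemize}
So semisimplicity alone does not force a subfunctor of $\bigoplus_\lambda\mathbb{S}^\lambda(\C^n)\otimes\mathbb{S}^\lambda(W)$ to have the form $\bigoplus_\lambda I_\lambda\otimes\mathbb{S}^\lambda(W)$. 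Likewise, the multilinear-part equivalence you invoke is valid only on Schur-type functors $W\mapsto(U\otimes W^{\otimes d})_{\mathfrak{S}_d}$; it sends $W\mapsto W_{\bar 0}$ and $W\mapsto W$ to the same $\mathfrak{S}_1$-module.

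The real obstacle is therefore not semisimplicity in the super setting. It is showing that $I$ is Schur-type, which your last paragraph does. Once that is in place, the appeal to a functor category in your third step can simply be dropped.
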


The type $B$ version of the previous theorem is as follows. 

\begin{theorem}[{\cite[Theorem 1.2]{Lentfer-Supersymmetry}}]\label{thm:DSS-B_n}
    Fix a positive integer $n$. 
    For partitions $\lambda$ with $\ell(\lambda) \leq n$, and bipartitions $(\mu^{(1)},\mu^{(2)}) \vdash n$, there exist nonnegative integer coefficients $c_{\lambda,(\mu^{(1)},\mu^{(2)})}$ such that for any $(k,j)$, the multigraded Frobenius series of $R_{\mathfrak{B}_n}^{(k,j)}$ is
\begin{equation}\label{eq:DSS-B_n}
    \Frob(R_{\mathfrak{B}_n}^{(k,j)}; \mathbf{q};\mathbf{u}) = \sum_{\lambda \in P(k,j,n)} \sum_{(\mu^{(1)},\mu^{(2)}) \vdash n} c_{\lambda,(\mu^{(1)},\mu^{(2)})} s_\lambda(\mathbf{q}/\mathbf{u})s_{\mu^{(1)}}(\x)s_{\mu^{(2)}}(\y).
\end{equation}
\end{theorem}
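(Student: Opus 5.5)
The plan is to realize $\mathfrak{gl}(k|j)$ as a symmetry algebra of $R_{\mathfrak{B}_n}^{(k,j)}$ commuting with $\mathfrak{B}_n$. We then decompose using the polynomial representation theory of the general linear Lie superalgebra, and finally show the resulting multiplicities do not depend on $(k,j)$.

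\emph{Step 1 (superalgebra action).} Write $S = \Sym\big((\C^n)^{\oplus k}\big)\otimes\bigwedge\big((\C^n)^{\oplus j}\big)$, which is the supersymmetric algebra $\mathcal{S}(\C^n\otimes\C^{k|j})$. The polarization operators $\sum_{i} a_i\partial_{b_i}$, for $a,b$ ranging over all bosonic and fermionic variable sets, span a copy of $\mathfrak{gl}(k|j)$. These operators are superderivations commuting with the diagonal $\mathfrak{B}_n$-action. Hence they preserve $S^{\mathfrak{B}_n}_+$, and therefore they preserve the ideal $I_{\mathfrak{B}_n}^{(k,j)}$. It follows that $R_{\mathfrak{B}_n}^{(k,j)}$ is a $\mathfrak{gl}(k|j)\times\mathfrak{B}_n$-module, and it is a quotient of $S$. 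One could equally use the harmonic model of the appendix.

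\emph{Step 2 (super Howe duality).} By $(\mathfrak{gl}(k|j),\GL_n)$-Howe duality (Cheng--Wang), $S$ decomposes as $\bigoplus_\lambda L_\lambda^{k|j}\otimes \mathbb{S}^\lambda(\C^n)$. The sum runs over $\lambda$ with $\ell(\lambda)\le n$ and $\lambda_{k+1}\le j$, and the super character of the irreducible polynomial module $L_\lambda^{k|j}$ is $s_\lambda(\mathbf{q}/\mathbf{u})$. In particular $S$ is a completely reducible polynomial $\mathfrak{gl}(k|j)$-module. Its quotient $R_{\mathfrak{B}_n}^{(k,j)}$ is then a direct sum of copies of $L_\lambda^{k|j}$ with $\lambda\in P(k,j,n)$. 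Grouping these by $\mathfrak{B}_n$-isotypic components gives the shape of \eqref{eq:DSS-B_n}, with coefficients $c^{(k,j)}_{\lambda,(\mu^{(1)},\mu^{(2)})}$ that may a priori depend on $(k,j)$.

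\emph{Step 3 (stability in $(k,j)$).} This is the main obstacle. Fix $K\ge k$ and $J\ge j$. Setting the extra variable sets to zero is a ring homomorphism $S^{(K,J)}\to S^{(k,j)}$ that is $\mathfrak{B}_n$-equivariant and sends invariants to invariants. Consequently the multigraded piece of $I^{(K,J)}$ of degree zero in the extra sets is exactly $I^{(k,j)}$: one inclusion comes from applying the map, and the other holds because $I^{(k,j)}\subseteq I^{(K,J)}$. Therefore $R^{(k,j)}$ is the degree-zero part of $R^{(K,J)}$ in the extra variables. On characters this corresponds to specializing the extra $q$'s and $u$'s to $0$, and $s_\lambda(\mathbf{q}/\mathbf{u})$ specializes to the corresponding super Schur function, vanishing exactly when $\lambda\notin P(k,j,n)$. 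Because the functions $s_\lambda(\mathbf{q}/\mathbf{u})$, $\lambda\in P(k,j,n)$, are linearly independent, we get $c^{(k,j)}_{\lambda,\bullet}=c^{(K,J)}_{\lambda,\bullet}$ for all $\lambda\in P(k,j,n)$. Taking $K,J\ge n$, every $\lambda$ with $\ell(\lambda)\le n$ lies in $P(K,J,n)$. We then define $c_{\lambda,(\mu^{(1)},\mu^{(2)})}$ from a single such large $(K,J)$; any two large choices agree by the same comparison with their common enlargement. Nonnegativity and integrality hold because these coefficients are multiplicities.
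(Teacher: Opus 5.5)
Your proof is correct and follows the same route as the source this paper cites without reproof, \cite{Lentfer-Supersymmetry}: the ideal is stable under $\mathfrak{gl}(k|j)\times\mathfrak{B}_n$ (recorded here as Lemma~\ref{lem:polarization-stability}(a)), super Howe duality gives the $s_\lambda(\mathbf{q}/\mathbf{u})$ expansion, and restricting to degree zero in the extra variable sets recovers the smaller ideal (Lemma~\ref{lem:one-set-subring} is the special case $I^{(k,0)}\cap\C[\bm{x}^{(1)}_n]=I^{(1,0)}$ of your Step~3). One wording fix: $s_\lambda(\mathbf{q}/\mathbf{u})$ is the ordinary (unsigned) character of $L_\lambda^{k|j}$, not its supercharacter, and the ordinary character is what the multigraded Frobenius series records.
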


The following is well known (see \cite[Proposition 1.2.2]{Haiman1994}); we include a short proof for completeness.

\begin{lemma}\label{lem:trivial}
    Fix $n \geq 1$ and let $G$ be $\mathfrak{S}_n$ or $\mathfrak{B}_n$. 
    For any $k,j$ nonnegative integers, the multiplicity of the trivial character of $G$ in $\Frob(R_G^{(k,j)}; \mathbf{q};\mathbf{u})$ is $1$; that is, $\langle \Frob(R_n^{(k,j)}; \mathbf{q};\mathbf{u}), s_{(n)}(\z)\rangle \allowbreak= 1$ and $\langle \Frob(R_{\mathfrak{B}_n}^{(k,j)}; \mathbf{q};\mathbf{u}), s_{(n)}(\x)s_\varnothing(\y)\rangle = 1$.
\end{lemma}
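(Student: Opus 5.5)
The plan is to show that the $G$-invariant subspace $(R_G^{(k,j)})^G$ is exactly the span of the image of $1$, i.e.\ it is one-dimensional and concentrated in multidegree $\mathbf{0}$. The trivial-character analogue of Lemma~\ref{lem:alt}, obtained by replacing $\det$ with the trivial character and $Y$ with the symmetrizer $\frac{1}{|G|}\sum_{g} g$, identifies the multiplicity of the trivial character with $\Hilb((R_G^{(k,j)})^G;\mathbf{q};\mathbf{u})$. It therefore suffices to show that this Hilbert series equals $1$.

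\emph{Degree zero.} The multidegree $\mathbf{0}$ component of $S$ is $\C\cdot 1$. The ideal $I_G^{(k,j)}$ is generated by elements of positive degree, so it meets degree $0$ trivially. Hence $(R_G^{(k,j)})_{\mathbf{0}} = \C$, and $G$ acts trivially on it, which contributes the constant term $1$.

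\emph{Positive degrees.} Take a multidegree $\mathbf{d} \neq \mathbf{0}$ and an invariant class $[f] \in (R_G^{(k,j)})_{\mathbf{d}}$. By Remark~\ref{rem:averaging}, applied multidegree by multidegree, $[f]$ lifts to a $G$-invariant multihomogeneous polynomial $\tilde f$ of multidegree $\mathbf{d}$. Since $\mathbf{d} \neq \mathbf{0}$, we have $\tilde f \in S_+^G \subseteq I_G^{(k,j)}$, so $[f]=0$. Consequently $(R_G^{(k,j)})^G$ vanishes in every positive multidegree.

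Combining the two steps gives $\Hilb((R_G^{(k,j)})^G)=1$. For $\mathfrak{S}_n$ this equals $\langle \Frob, s_{(n)}(\z)\rangle$, and for $\mathfrak{B}_n$ it equals $\langle \Frob, s_{(n)}(\x)s_\varnothing(\y)\rangle$, because $F(\mathbbm{1}) = h_n = s_{(n)}$ in each case. The only point needing care is the lifting of invariants, which requires that we work over characteristic $0$ and that the ideal is $G$-stable and multihomogeneous. Remark~\ref{rem:averaging} already supplies exactly this, so I expect no real obstacle.
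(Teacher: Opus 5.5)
Your proposal is correct and matches the paper's proof: lift each invariant class to a multihomogeneous $G$-invariant polynomial via the Reynolds operator (Remark~\ref{rem:averaging}), note that in nonzero multidegree this polynomial lies in $S_+^{G} \subseteq I_G^{(k,j)}$, and observe that the multidegree-zero component is spanned by $[1]$. Your explicit checks that $1 \notin I_G^{(k,j)}$ and that the trivial multiplicity equals $\Hilb\big((R_G^{(k,j)})^G\big)$, via the symmetrizer, spell out what the paper leaves implicit.
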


\begin{proof}
    By Remark~\ref{rem:averaging}, every invariant element of a multihomogeneous component of $R_G^{(k,j)}$ is the image of a $G$-invariant polynomial of that multidegree. 
    A $G$-invariant polynomial of nonzero multidegree lies in $S_+^{G} \subseteq I_G^{(k,j)}$, so its image is zero; the component of multidegree zero is spanned by $[1]$. Hence the trivial character occurs exactly once, in multidegree zero.
\end{proof}

We recall the following from \cite[Lemma 3.1]{Lentfer-Supersymmetry}.
\begin{lemma}\label{lem:one-set-subring}
    Let $k \geq 1$.
    The inclusion $\C[\bm{x}^{(1)}_n] \hookrightarrow \C[\bm{x}^{(1)}_n, \ldots, \bm{x}^{(k)}_n]$ induces injections $R_n^{(1,0)} \hookrightarrow R_n^{(k,0)}$ and $R_{\mathfrak{B}_n}^{(1,0)} \hookrightarrow R_{\mathfrak{B}_n}^{(k,0)}$.
    Equivalently,
    \begin{equation}
        I_n^{(k,0)} \cap \C[\bm{x}^{(1)}_n] = I_n^{(1,0)}
        \qquad \text{and} \qquad
        I_{\mathfrak{B}_n}^{(k,0)} \cap \C[\bm{x}^{(1)}_n]
        = I_{\mathfrak{B}_n}^{(1,0)}.
    \end{equation}
\end{lemma}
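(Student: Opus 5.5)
The plan is to prove the equivalent ideal-theoretic statement, $I^{(k,0)} \cap \C[\bm{x}^{(1)}_n] = I^{(1,0)}$ (for $\mathfrak{S}_n$ and $\mathfrak{B}_n$ alike), and read off the injection from it. The inclusion $\supseteq$ is immediate. Every generator of $I^{(1,0)}$ is a power sum in $\bm{x}^{(1)}$ alone, given by Proposition~\ref{prop:Weyl-bf} or Proposition~\ref{prop:Weyl-B_n-bf}. Such a power sum is a polarized power sum with $r_2=\cdots=r_k=0$, so it lies in $I^{(k,0)}$. This inclusion also shows that the map $R^{(1,0)} \to R^{(k,0)}$ is well defined.

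For $\subseteq$, I would use the ring retraction
\begin{equation*}
\pi\colon \C[\bm{x}^{(1)}_n,\ldots,\bm{x}^{(k)}_n] \to \C[\bm{x}^{(1)}_n],
\end{equation*}
which sets $\bm{x}^{(2)} = \cdots = \bm{x}^{(k)} = 0$. This map is $G$-equivariant, because $G$ acts diagonally on each set of variables separately. It is also the identity on $\C[\bm{x}^{(1)}_n]$.

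Next I check that $\pi$ sends generators of $I^{(k,0)}$ into $I^{(1,0)}$. Under $\pi$, a polarized power sum $p_{\bm{r}}$ maps to $0$ if some $r_\ell > 0$ with $\ell \ge 2$. Otherwise it maps to $p_{r_1}(\bm{x}^{(1)})$, which is a generator of $I^{(1,0)}$ because it lies in the allowed degree range, with even degree in type $B$. One could equally argue that $\pi$ carries $S^G_+$ into $\C[\bm{x}^{(1)}]^G_+$, by equivariance and homogeneity, which avoids the explicit generators.

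Since $\pi$ is a ring homomorphism, it maps the ideal $I^{(k,0)}$ into the ideal $I^{(1,0)}$. Now take $f \in I^{(k,0)} \cap \C[\bm{x}^{(1)}_n]$. Then $f = \pi(f) \in I^{(1,0)}$, which gives $\subseteq$.

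I do not expect a real obstacle here. The only point that needs care is that $\pi$ is a $G$-equivariant ring map fixing the subring, and that invariants without constant term stay invariants without constant term under it.
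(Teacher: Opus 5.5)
Your proof is correct and complete. The retraction $\pi$ is a $G$-equivariant ring map that fixes $\C[\bm{x}^{(1)}_n]$ pointwise and preserves constant terms. Hence $\pi(S^G_+) \subseteq \C[\bm{x}^{(1)}_n]^G_+$, so $\pi(I_G^{(k,0)}) \subseteq I_G^{(1,0)}$, and $f = \pi(f)$ gives the nontrivial inclusion. The easy inclusion makes the induced map well defined, as you say.

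The paper does not prove this lemma itself; it quotes it from \cite[Lemma 3.1]{Lentfer-Supersymmetry}. So there is no in-text argument to measure yours against.

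For comparison, the tools of Appendix~\ref{sec:harmonic} would give a different proof, via harmonics:
\begin{itemize}
\item An element of $\C[\bm{x}^{(1)}_n]$ is automatically annihilated by every $p_{\bm{r}}(\partial)$ with some $r_\ell > 0$, $\ell \geq 2$.
\item So by Corollary~\ref{cor:harmonics-explicit}, the harmonics of multidegree $(d,0,\ldots,0)$ in $k$ sets coincide with the degree-$d$ harmonics in one set.
\item Taking orthogonal complements inside $S_{(d,0,\ldots,0)} = \C[\bm{x}^{(1)}_n]_d$, using the decomposition $S_{\bm{d}} = I_{\bm{d}} \oplus H_{\bm{d}}$ from the proof of Theorem~\ref{thm:harmonics-iso}, yields $I_G^{(k,0)} \cap \C[\bm{x}^{(1)}_n]_d = (I_G^{(1,0)})_d$ degree by degree.
\end{itemize}

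Your retraction argument is more elementary. It needs neither the apolar form nor the Weyl generating sets, as you observe. It works for any group acting diagonally. It also extends verbatim to adding extra fermionic sets, since setting whole sets of anticommuting variables to zero is still an equivariant algebra retraction onto the subalgebra.
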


Now we are ready for the key computation: the $G_{n-1}$-invariants of $R_{G_n}^{(k,0)}$, for $G_n = \mathfrak{S}_n$ or $\mathfrak{B}_n$, treated uniformly.

\begin{proposition}\label{prop:parabolic-invariants}
    Let $(G_n, N)$ be $(\mathfrak{S}_n, n)$ or $(\mathfrak{B}_n, 2n)$, and let $G_{n-1} \leq G_n$ be the subgroup of elements that act trivially on the $n$th coordinate.
    For $k \geq 1$, the images in $R_{G_n}^{(k,0)}$ of the monomials
    \begin{equation}\label{eq:basis}
        \mathcal{B} = \left\{(x^{(1)}_n)^{i_1}\cdots(x^{(k)}_n)^{i_k} \;\middle|\;
        0 \leq i_1+\cdots+i_k \leq N-1\right\}
    \end{equation}
    form a basis of the invariant subspace $(R_{G_n}^{(k,0)})^{G_{n-1}}$.
    In particular, its multigraded Hilbert series is $1 + s_{(1)}(\mathbf{q}) + s_{(2)}(\mathbf{q}) + \cdots + s_{(N-1)}(\mathbf{q})$.
\end{proposition}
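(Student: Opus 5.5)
We prove spanning and linear independence separately. Throughout write $y_\ell = x^{(\ell)}_n$ and $S = \C[\bm{x}^{(1)},\ldots,\bm{x}^{(k)}]$.

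\emph{Spanning.} By Remark~\ref{rem:averaging} applied to $H = G_{n-1}$, every $G_{n-1}$-invariant class in $R_{G_n}^{(k,0)}$ lifts to a $G_{n-1}$-invariant polynomial $f \in S^{G_{n-1}}$. Now $G_{n-1}$ acts only on the first $n-1$ indices and fixes the variables $y_1,\ldots,y_k$. Hence, by Propositions~\ref{prop:Weyl-bf} and~\ref{prop:Weyl-B_n-bf} (applied to $n-1$ indices), $S^{G_{n-1}}$ is generated by $y_1,\ldots,y_k$ together with the polarized power sums $p^{(n-1)}_{\bm r}$ in the first $n-1$ indices, with $|\bm r| \geq 1$ (respectively $|\bm r|$ even and $\geq 2$ in type $B$).

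Modulo the ideal we have
\[
p^{(n-1)}_{\bm r} = p^{(n)}_{\bm r} - y^{\bm r} \equiv -y^{\bm r}.
\]
This uses that $p^{(n)}_{\bm r}$ is $G_n$-invariant without constant term for all $|\bm r|\ge 1$ in type $A$, and for all even $|\bm r|\ge 2$ in type $B$. Therefore $f$ is congruent to a polynomial in $y_1,\ldots,y_k$ alone.

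It remains to kill the monomials in the $y$'s of total degree $\geq N$. For this we use the classical fact that $y_1$ satisfies $\prod_{i}(y_1 - x^{(1)}_i)=0$ in type $A$, and $\prod_i (y_1^2 - (x^{(1)}_i)^2)=0$ in type $B$. Expanding, this gives
\[
y_1^{N} \in I + (\text{lower powers of } y_1 \text{ times invariants}),
\]
that is, $y_1^N \in I_{G_n}^{(1,0)}$.

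To pass from the single variable $y_1$ to arbitrary monomials in $y_1,\ldots,y_k$ of total degree $N$, we polarize. Consider the identity $y_1^N\in I$ in the one-set ring with the variable $x^{(1)}$ replaced by the linear combination $\sum_\ell c_\ell x^{(\ell)}$. This substitution maps invariants to invariants, so $(\sum_\ell c_\ell y_\ell)^N \in I_{G_n}^{(k,0)}$ for all scalars $c_\ell$. Extracting coefficients in the $c_\ell$ shows that every monomial of total degree $N$ in the $y$'s lies in the ideal. Hence $\mathcal{B}$ spans.

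\emph{Linear independence.} This is the main obstacle. We plan to reduce to $k=1$ via the same polarization trick. Suppose a nonzero multihomogeneous combination $\sum_{|\bm i| = d} a_{\bm i}\, y^{\bm i}$ with $d\le N-1$ lies in $I_{G_n}^{(k,0)}$. The ideal is stable under the $\GL_k$ action, because it is generated by all invariants. Specialize via the substitution $x^{(\ell)} \mapsto c_\ell x^{(1)}$; this is a ring map sending invariants to invariants, so it sends $I_{G_n}^{(k,0)}$ into $I_{G_n}^{(1,0)}$. The image is $\bigl(\sum_{\bm i} a_{\bm i} c^{\bm i}\bigr)\, y_1^d$. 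Choosing $c$ so that the polynomial $\sum_{\bm i} a_{\bm i} c^{\bm i}$ is nonzero would then give $y_1^d \in I_{G_n}^{(1,0)}$.

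This contradicts the one-set case, where $1,y_1,\ldots,y_1^{N-1}$ are independent in $R_{G_n}^{(1,0)}$. That independence holds because $(R^{(1,0)}_{G_n})^{G_{n-1}}$ has Hilbert series
\[
\frac{[N]_q!\text{ or }[2n]_q!!}{[n-1]_q! \text{ or } [2n-2]_q!!} = [N]_q,
\]
which is the graded dimension of the induced representation from the trivial character of $G_{n-1}$ (Theorem~\ref{thm:Chevalley-Shephard-Todd} and the regularity of $R^{(1,0)}_{G_n}$). Combined with spanning by $1,\ldots,y_1^{N-1}$, this gives independence. Lemma~\ref{lem:one-set-subring} can alternatively be used to see the $k=1$ case sits inside.

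The Hilbert series then reads off as $\sum_{d<N} h_d(\mathbf q)=\sum_{d<N}s_{(d)}(\mathbf q)$.
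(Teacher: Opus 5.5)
Your proof is correct. The spanning half is essentially the paper's: lift via the Reynolds operator, reduce the $(n-1)$-index power sums to $-y^{\bm r}$ modulo $I$, and kill $y_1^N$ via the characteristic polynomial.

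You diverge from the paper in three places:
\begin{itemize}
\item \emph{Moving between one and $k$ sets of variables.} The paper applies the polarization operators $E_{\ell,1}$ and $E_{1,\ell}$ (Lemma~\ref{lem:polarization-stability}) and then invokes Lemma~\ref{lem:one-set-subring}, imported from another paper. You instead use the $G_n$-equivariant graded ring maps $x^{(1)}\mapsto\sum_\ell c_\ell x^{(\ell)}$ and $x^{(\ell)}\mapsto c_\ell x^{(1)}$. These send positive-degree invariants to positive-degree invariants, hence carry $I_{G_n}^{(1,0)}$ into $I_{G_n}^{(k,0)}$ and $I_{G_n}^{(k,0)}$ into $I_{G_n}^{(1,0)}$. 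With $c=(1,0,\ldots,0)$ the second map is a retraction onto $\C[\bm{x}^{(1)}]$, so your approach even gives a one-line proof of Lemma~\ref{lem:one-set-subring}.
\item \emph{Showing $y_1^d \neq 0$ for $d \leq N-1$.} The paper cites explicit monomial bases: Artin's, and Swanson--Wallach's in type $B$. You count dimensions instead. The $k=1$ case of your own spanning argument gives $N$ spanning elements $1,y_1,\ldots,y_1^{N-1}$ of $(R_{G_n}^{(1,0)})^{G_{n-1}}$. Regularity of $R_{G_n}^{(1,0)}$ gives this space dimension $[G_n:G_{n-1}]=N$, so these elements are independent.
\item \emph{What this buys.} Your argument is self-contained modulo Chevalley's regularity theorem. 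It needs neither the $\mathfrak{gl}(k|j)$ stability lemma nor an explicit type $B$ basis.
\end{itemize}

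One correction: regularity and Theorem~\ref{thm:Chevalley-Shephard-Todd} alone do not give the \emph{graded} Hilbert series $[N]_q$ of $(R_{G_n}^{(1,0)})^{G_{n-1}}$. Regularity is an ungraded statement. The quotient formula $\Hilb(\C[V]^{G_{n-1}})/\Hilb(\C[V]^{G_n})$ needs something more, such as freeness of $\C[V]^{G_{n-1}}$ over $\C[V]^{G_n}$, or Chevalley's graded isomorphism $\C[V]\cong \C[V]^{G_n}\otimes R_{G_n}^{(1,0)}$ followed by taking $G_{n-1}$-invariants. You do not need the graded statement, though: the ungraded count together with spanning already yields independence, and the graded series then follows.
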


\begin{proof}
    Write $I = I_{G_n}^{(k,0)}$ and $R = R_{G_n}^{(k,0)}$, and let
    $[f]$ denote the image in $R$ of a polynomial $f$. 
    Let $\mathcal{V}$ be the set of exponent vectors $(r_1,\ldots,r_k)$ with $1 \leq r_1 + \cdots + r_k \leq N$, and with $r_1 + \cdots + r_k$ even when $G_n = \mathfrak{B}_n$. For $(r_1,\ldots,r_k) \in \mathcal{V}$ we have $p_{r_1,\ldots,r_k}(\bm{x}^{(1)}_n, \ldots, \bm{x}^{(k)}_n) \in I$ by Proposition~\ref{prop:Weyl-bf} (resp.\ Proposition~\ref{prop:Weyl-B_n-bf}) at $j = 0$, so splitting off the $n$th summand gives
    \begin{equation}\label{eq:pps-congruence}
        p_{r_1,\ldots,r_k}(\bm{x}^{(1)}_{n-1}, \ldots, \bm{x}^{(k)}_{n-1})
        \equiv -(x^{(1)}_n)^{r_1} \cdots (x^{(k)}_n)^{r_k} \pmod{I}.
    \end{equation}

    \emph{Spanning.} By Remark~\ref{rem:averaging}, every element of $(R)^{G_{n-1}}$ is the image of a $G_{n-1}$-invariant polynomial, and every $G_{n-1}$-invariant polynomial is a sum of products $g \cdot f$ with $g \in \C[\bm{x}^{(1)}_{n-1}, \ldots, \bm{x}^{(k)}_{n-1}]^{G_{n-1}}$ and $f \in \C[x^{(1)}_n, \ldots, x^{(k)}_n]$. 
    By Proposition~\ref{prop:Weyl-bf} (resp. Proposition~\ref{prop:Weyl-B_n-bf}) applied with $n-1$ in place of $n$ at $j = 0$, the invariant algebra $\C[\bm{x}^{(1)}_{n-1}, \ldots, \bm{x}^{(k)}_{n-1}]^{G_{n-1}}$ is generated by polarized power sums $p_{r_1,\ldots,r_k}(\bm{x}_{n-1}^{(1)},\ldots,\bm{x}_{n-1}^{(k)})$ with $1 \leq r_1+\cdots +r_k \leq n-1$ (resp. with $r_1+\cdots+r_k$ even and at most $2(n-1)$).
    These exponent vectors lie in $\mathcal{V}$, so each generator is congruent modulo $I$ to a monomial in $x^{(1)}_n, \ldots, x^{(k)}_n$ by equation~\eqref{eq:pps-congruence}.
    Hence $(R)^{G_{n-1}}$ is spanned by the images of the monomials of $\C[x^{(1)}_n, \ldots, x^{(k)}_n]$.

    Next we claim that $(x^{(1)}_n)^{N} = 0$ in $R_{G_n}^{(1,0)}$. 
    For $1 \leq i \leq n$, the polynomials $e_i(\bm{x}^{(1)}_n)$ (resp.\ $e_i\big((x^{(1)}_1)^2, \ldots, (x^{(1)}_n)^2\big)$) are $G_n$-invariant with zero constant term, hence lie in $I_{G_n}^{(1,0)}$. 
    Therefore, in $R_{G_n}^{(1,0)}[t]$,
    \begin{equation}
        \prod_{i=1}^{n}\big(t - x^{(1)}_i\big) = t^{n}, \qquad
        \text{resp.}\quad
        \prod_{i=1}^{n}\big(t^2 - (x^{(1)}_i)^2\big) = t^{2n},
    \end{equation}
    and substituting $t = x^{(1)}_n$ gives $(x^{(1)}_n)^{N} = 0$; that is, $(x^{(1)}_n)^{N} \in I_{G_n}^{(1,0)} \subseteq I$. 
    Since $I$ is stable under the polarization operators of equation~\eqref{eq:superderivations} (Lemma~\ref{lem:polarization-stability}), and $E_{k,1}^{i_k} \cdots E_{2,1}^{i_2}\,(x^{(1)}_n)^{N} = c\, (x^{(1)}_n)^{i_1}\cdots(x^{(k)}_n)^{i_k}$ for a nonzero scalar $c$ whenever $i_1 + \cdots + i_k = N$, every monomial of $\C[x^{(1)}_n, \ldots, x^{(k)}_n]$ of total degree $N$ (and hence of any higher degree) lies in $I$. 
    Hence, by the previous paragraph, $(R)^{G_{n-1}}$ is spanned by $\{[b] \mid b \in \mathcal{B}\}$.

    \emph{Linear independence.} Distinct elements of $\mathcal{B}$ have distinct multidegrees, and $I$ is multihomogeneous; therefore a nontrivial linear dependence among the $[b]$ would force a single monomial of $\mathcal{B}$ into $I$, say $(x^{(1)}_n)^{i_1}\cdots(x^{(k)}_n)^{i_k} \in I$ with $d := i_1+\cdots+i_k \leq N-1$. 
    Applying $E_{1,k}^{i_k} \cdots E_{1,2}^{i_2}$, which preserves $I$ by Lemma~\ref{lem:polarization-stability}, gives $c\,(x^{(1)}_n)^{d} \in I$ for a nonzero scalar $c$, so $(x^{(1)}_n)^{d} \in I \cap \C[\bm{x}^{(1)}_n] = I_{G_n}^{(1,0)}$ by Lemma~\ref{lem:one-set-subring}. 
    This is a contradiction: $\{x_1^{a_1}\cdots x_n^{a_n} : 0 \leq a_i \leq i-1\}$ is a basis of $R_n^{(1,0)}$ (the Artin basis \cite{Artin}), and likewise $\{x_1^{a_1}\cdots x_n^{a_n} : 0 \leq a_i \leq 2i-1\}$ is a basis of $R_{\mathfrak{B}_n}^{(1,0)}$ \cite[Section 5]{SwansonWallachII}; in either case $x_n^{d} \neq 0$ for $d \leq N-1$.

    Finally, $\mathcal{B}$ contains exactly one monomial of each multidegree $(i_1,\ldots,i_k)$ with $i_1+\cdots+i_k \leq N-1$, which gives the stated Hilbert series.
\end{proof}

\subsection{The standard character of \texorpdfstring{$R_{n}^{(k,j)}$}{Rn(k,j)}}\label{subsec:standard}

The following result was first announced in \cite{Lentfer-dissertation}.
\begin{theorem}\label{thm:standard-character} Fix $n \geq 2$. For any $k,j$
nonnegative integers,
    \begin{equation}
        \langle \Frob(R_n^{(k,j)}; \mathbf{q};\mathbf{u}),
        s_{(n-1,1)}(\z) \rangle
        = s_{(1)}(\mathbf{q}/\mathbf{u}) + s_{(2)}(\mathbf{q}/\mathbf{u})
        + \cdots + s_{(n-1)}(\mathbf{q}/\mathbf{u}).
    \end{equation}
\end{theorem}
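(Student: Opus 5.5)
The plan is to reduce to the purely bosonic case $j=0$ using Theorem~\ref{thm:DSS}, and then compute there using Proposition~\ref{prop:parabolic-invariants}.

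First, Frobenius reciprocity gives $\langle F\ch M, h_{n-1}h_1\rangle = \dim M^{\mathfrak{S}_{n-1}}$. Since $h_{n-1}h_1 = s_{(n)} + s_{(n-1,1)}$, the multiplicity of $s_{(n-1,1)}$ in any multigraded $\mathfrak{S}_n$-module equals the Hilbert series of its $\mathfrak{S}_{n-1}$-invariants minus the multiplicity of the trivial character. For $R_n^{(k,0)}$ with $k \geq 1$, Proposition~\ref{prop:parabolic-invariants} gives the invariants' Hilbert series $1 + s_{(1)}(\mathbf{q}) + \cdots + s_{(n-1)}(\mathbf{q})$. Lemma~\ref{lem:trivial} gives trivial multiplicity $1$. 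Hence
\begin{equation}
\langle \Frob(R_n^{(k,0)};\mathbf{q}), s_{(n-1,1)}(\z)\rangle = \sum_{d=1}^{n-1} s_{(d)}(\mathbf{q}).
\end{equation}

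Second, I transfer this to all $(k,j)$. By Theorem~\ref{thm:DSS},
\begin{equation}
\langle \Frob(R_n^{(k,j)};\mathbf{q};\mathbf{u}), s_{(n-1,1)}(\z)\rangle = \sum_{\lambda \in P(k,j,n)} c_{\lambda,(n-1,1)}\, s_\lambda(\mathbf{q}/\mathbf{u}),
\end{equation}
with coefficients independent of $(k,j)$. Take $k$ large, say $k \geq n$, and $j=0$. Then every $\lambda$ with $\ell(\lambda)\le n$ lies in $P(k,0,n)$, and the $s_\lambda(\mathbf{q})$ are linearly independent. Comparing with the first step forces $c_{\lambda,(n-1,1)} = 1$ for $\lambda = (d)$ with $1 \le d \le n-1$, and $c_{\lambda,(n-1,1)}=0$ otherwise. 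Substituting back gives $\sum_{d=1}^{n-1} s_{(d)}(\mathbf{q}/\mathbf{u})$ for every $(k,j)$.

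This includes the degenerate cases. When $k=0$, the super Schur function $s_{(d)}(\varnothing/\mathbf{u}) = e_d(\mathbf{u})$, which vanishes when $d>j$, consistent with $\lambda_1 \le j$. When $k=j=0$, all terms of positive degree vanish, which matches $R^{(0,0)}_n = \C$.

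The main point to check carefully is the coefficient extraction. The $c_{\lambda,\mu}$ must genuinely be independent of $(k,j)$, which is the content of Theorem~\ref{thm:DSS}, and the $s_\lambda(\mathbf{q})$ with $\ell(\lambda) \le n$ must be independent in $k\ge n$ variables. Everything else is Frobenius reciprocity. I should also confirm that Proposition~\ref{prop:parabolic-invariants} applies for the large $k$ chosen; it does, since it holds for all $k\ge1$.
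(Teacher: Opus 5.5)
Your proposal is correct and follows essentially the same route as the paper: you use Frobenius reciprocity with $h_{n-1}h_1 = s_{(n)}+s_{(n-1,1)}$, Proposition~\ref{prop:parabolic-invariants} with $N=n$, and Lemma~\ref{lem:trivial} to settle the bosonic case. You then extract the universal coefficients $c_{\lambda,(n-1,1)}$ from Theorem~\ref{thm:DSS} by taking $k \geq n$ and using linear independence of Schur polynomials, which transfers the result to every $(k,j)$.
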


\begin{proof}
    By the Pieri rule and Lemma~\ref{lem:trivial},
    \begin{equation}
        \langle \Frob(R_n^{(k,0)}; \mathbf{q}), s_{(n-1)}s_{(1)} (\z)\rangle
        = \langle \Frob(R_n^{(k,0)}; \mathbf{q}), s_{(n-1,1)} (\z)\rangle + 1.
    \end{equation}
    Since $s_{(n-1)}s_{(1)}(\z) = \Frob(\Ind_{\mathfrak{S}_{n-1}}^{\mathfrak{S}_n} \mathbbm{1}_{\mathfrak{S}_{n-1}})$, Frobenius reciprocity and Proposition~\ref{prop:parabolic-invariants} (with $N = n$) give
    \begin{equation}
        \langle \Frob(R_n^{(k,0)}; \mathbf{q}), s_{(n-1)}s_{(1)} (\z)\rangle
        = \Hilb\big((R_n^{(k,0)})^{\mathfrak{S}_{n-1}}; \mathbf{q}\big)
        = 1 + s_{(1)}(\mathbf{q}) + \cdots + s_{(n-1)}(\mathbf{q}).
    \end{equation}
    Hence $\langle \Frob(R_n^{(k,0)}; \mathbf{q}), s_{(n-1,1)} (\z)\rangle = s_{(1)}(\mathbf{q}) + \cdots + s_{(n-1)}(\mathbf{q})$ for every $k \geq 1$. 
    By Theorem~\ref{thm:DSS}, this multiplicity also equals $\sum_{\lambda} c_{\lambda,(n-1,1)}\, s_\lambda(\mathbf{q})$ with coefficients independent of $k$; taking $k \geq n$ and using the linear independence of the Schur polynomials $s_\lambda(q_1,\ldots,q_k)$ for $\ell(\lambda) \leq n \leq k$, we conclude that $c_{\lambda,(n-1,1)} = 1$ for $\lambda \in \{(1), (2), \ldots, (n-1)\}$ and $c_{\lambda,(n-1,1)} = 0$ otherwise. 
    Substituting these coefficients into equation~\eqref{eq:DSS} yields the claimed formula for all $(k,j)$.
\end{proof}

\subsection{Two characters of \texorpdfstring{$R_{\mathfrak{B}_n}^{(k,j)}$}{RBn(k,j)}}\label{subsec:standard-B_n}

We prove the following result, which constrains which universal series coefficients $c_{\lambda,(\mu^{(1)},\mu^{(2)})}$ from Theorem~\ref{thm:DSS-B_n} can be nonzero.

\begin{proposition}\label{prop:parity}
    We have $c_{\lambda,(\mu^{(1)},\mu^{(2)})} = 0$ unless $|\lambda| \equiv |\mu^{(2)}| \pmod{2}$.
\end{proposition}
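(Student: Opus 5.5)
The idea is to use the central element $-1 = ((-1,\ldots,-1),\mathrm{id}) \in \mathfrak{B}_n$. It acts on every variable $x^{(\ell)}_i$ and $\theta^{(\ell)}_i$ by $-1$. Hence it acts on any multihomogeneous component of $R_{\mathfrak{B}_n}^{(k,j)}$ of total degree $d$ by the scalar $(-1)^d$. On the other hand, $-1$ is central, so it acts on each irreducible $V^{(\mu^{(1)},\mu^{(2)})}$ by a scalar. That scalar is $(-1)^{|\mu^{(2)}|}$. To see this, note that $V^{(\mu^{(1)},\mu^{(2)})}$ is induced from $\mathfrak{B}_{a}\times\mathfrak{B}_b$ (with $a=|\mu^{(1)}|$, $b=|\mu^{(2)}|$) of a representation in which the sign-change subgroup $C_2^a$ acts trivially and $C_2^b$ acts by the product of the signs. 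Since $-1$ is central, it acts on the induced module as it does on the inducing module, namely by $(-1)^b$. As a check, on $V=V^{((n-1),(1))}$ it acts by $-1$, and on $\wedge^iV=V^{((n-i),(1^i))}$ by $(-1)^i$. We may therefore conclude that in $\Frob(R_{\mathfrak{B}_n}^{(k,j)};\mathbf{q};\mathbf{u})$ the coefficient of $s_{\mu^{(1)}}(\x)s_{\mu^{(2)}}(\y)$ is a polynomial supported only on monomials of total degree $\equiv |\mu^{(2)}| \pmod 2$.

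Next I transfer this to the universal coefficients. I take $j=0$ and $k \geq n$, so that $P(k,0,n)$ contains all $\lambda$ with $\ell(\lambda)\le n$. By Theorem~\ref{thm:DSS-B_n},
\begin{equation}
\langle \Frob(R_{\mathfrak{B}_n}^{(k,0)};\mathbf{q}), s_{\mu^{(1)}}(\x)s_{\mu^{(2)}}(\y)\rangle = \sum_{\lambda} c_{\lambda,(\mu^{(1)},\mu^{(2)})}\, s_\lambda(q_1,\ldots,q_k).
\end{equation}
Each $s_\lambda(\mathbf{q})$ is homogeneous of degree $|\lambda|$. I split the sum according to the parity of $|\lambda|$. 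The part with $|\lambda|\not\equiv|\mu^{(2)}|$ consists of monomials of the wrong parity, so by the first paragraph it must vanish. The Schur polynomials $s_\lambda(q_1,\ldots,q_k)$ with $\ell(\lambda)\le n\le k$ are linearly independent, so the nonnegative integer coefficients $c_{\lambda,(\mu^{(1)},\mu^{(2)})}$ of the wrong parity are all $0$. This is the same linear-independence step used in the proof of Theorem~\ref{thm:standard-character}.

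The only step needing care is the identification of the scalar by which $-1$ acts on $V^{(\mu^{(1)},\mu^{(2)})}$. This should follow from the standard wreath-product construction in \cite[Section 5.5]{GeckPfeiffer} or \cite[Chapter I, Appendix B]{Macdonald}. Alternatively, it can be read off from the Frobenius characteristic: the character value at the class of $-1$ (all $n$ cycles of length $1$ and negative) corresponds to the power sum $(p_1(\x)-p_1(\y))^n$. Pairing with $s_\lambda(\x)s_\mu(\y)$ gives $(-1)^{|\mu|}\binom{n}{|\lambda|}f^\lambda f^\mu$, which is $(-1)^{|\mu|}\dim V^{(\lambda,\mu)}$, as required.
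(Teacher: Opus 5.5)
Your proof is correct and follows the paper's argument: the central element $-1$ acts on $V^{(\mu^{(1)},\mu^{(2)})}$ by $(-1)^{|\mu^{(2)}|}$ (read off from the induced-module construction) and on multidegrees of total degree $D$ by $(-1)^D$, and linear independence then forces the wrong-parity coefficients to vanish. The only minor difference is in that last step. You specialize to $j=0$, $k\geq n$ and use linear independence of ordinary Schur polynomials, as in the proof of Theorem~\ref{thm:standard-character}, whereas the paper appeals to linear independence of super Schur functions for large $k,j$; your version is valid because every $\lambda$ with $\ell(\lambda)\leq n$ lies in $P(k,0,n)$ once $k\geq n$.
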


\begin{proof}
Recall the construction of the irreducible $\mathfrak{B}_n$-characters (see \cite[Section 5.5]{GeckPfeiffer}): for a bipartition $(\mu^{(1)}, \mu^{(2)})$ of $n$ with $|\mu^{(1)}| = a$ and $|\mu^{(2)}| = b$,
\begin{equation}\label{eq:Bn-irrep-construction}
\chi^{(\mu^{(1)},\mu^{(2)})} =
\Ind_{\mathfrak{B}_a \times \mathfrak{B}_b}^{\mathfrak{B}_n}
\left( \tilde{\chi}^{\mu^{(1)}}
\boxtimes \big( \delta_b \otimes  \tilde{\chi}^{\mu^{(2)}}  \big) \right),
\end{equation}
where, for $\nu \vdash m$, $\tilde{\chi}^{\nu} := \chi^\nu \circ \pi_m$ denotes the pullback of the irreducible $\mathfrak{S}_m$-character $\chi^\nu$ along the canonical projection $\pi_m \colon \mathfrak{B}_m \twoheadrightarrow \mathfrak{S}_m$, which forgets the signs, and $\delta_m$ is the linear character of $\mathfrak{B}_m$ defined by $\delta_m(\epsilon, \sigma) = \epsilon_1 \cdots \epsilon_m$.

Consider the central element $-1 := ((-1,\ldots,-1), \mathrm{id}) \in \mathfrak{B}_n$.
By Schur's lemma, $-1$ acts on any module affording an irreducible character $\chi$ of $\mathfrak{B}_n$ by the scalar $\chi(-1)/\chi(1)$.
Write $\psi := \tilde{\chi}^{\mu^{(1)}} \boxtimes \big( \delta_b \otimes \tilde{\chi}^{\mu^{(2)}} \big)$ for the inducing character in equation~\eqref{eq:Bn-irrep-construction}.
Since $-1$ is central and lies in $\mathfrak{B}_a \times \mathfrak{B}_b$, the induced character formula gives $\chi^{(\mu^{(1)},\mu^{(2)})}(-1) = [\mathfrak{B}_n : \mathfrak{B}_a \times \mathfrak{B}_b]\, \psi(-1)$, and likewise at the identity, so $\chi^{(\mu^{(1)},\mu^{(2)})}(-1)/\chi^{(\mu^{(1)},\mu^{(2)})}(1) = \psi(-1)/\psi(1)$.
Since $\pi_a$ and $\pi_b$ kill the sign coordinates, $\tilde{\chi}^{\mu^{(1)}}$ and $\tilde{\chi}^{\mu^{(2)}}$ take the same value at $-1$ as at $1$, while $\delta_b\big((-1,\ldots,-1),\mathrm{id}\big) = (-1)^{b}$, so $\psi(-1) = (-1)^{b}\, \psi(1)$.
Hence $-1$ acts by the scalar $(-1)^{b} = (-1)^{|\mu^{(2)}|}$ in any module affording $\chi^{(\mu^{(1)},\mu^{(2)})}$.

On the other hand, $-1$ acts on every variable in $\C[\bm{x}^{(1)},\ldots,\bm{x}^{(k)},\bm{\theta}^{(1)},\ldots,\bm{\theta}^{(j)}]$ by $-1$, and hence acts on the component of $R_{\mathfrak{B}_n}^{(k,j)}$ of multidegree $(r_1,\ldots,r_k;s_1,\ldots,s_j)$ by $(-1)^{D}$, where $D = r_1 + \cdots + r_k + s_1 + \cdots + s_j$ is the total degree.
Comparing the two scalars, the multiplicity of $\chi^{(\mu^{(1)},\mu^{(2)})}$ in this component vanishes unless $D \equiv |\mu^{(2)}| \pmod{2}$.

Now fix $(\mu^{(1)},\mu^{(2)})$. In the coefficient $\sum_{\lambda} c_{\lambda,(\mu^{(1)},\mu^{(2)})}\, s_\lambda(\mathbf{q}/\mathbf{u})$ of $s_{\mu^{(1)}}(\x) s_{\mu^{(2)}}(\y)$ in equation~\eqref{eq:DSS-B_n}, every monomial of $s_\lambda(\mathbf{q}/\mathbf{u})$ has total degree $|\lambda|$, so by the previous paragraph the sum over those $\lambda$ with $|\lambda| \not\equiv |\mu^{(2)}| \pmod{2}$ vanishes identically for every $(k,j)$. 
Since the super Schur functions are linearly independent for $k, j$ sufficiently large \cite[Lemma 6.4]{BereleRegev} and the coefficients are independent of $(k,j)$, these $c_{\lambda,(\mu^{(1)},\mu^{(2)})}$ all vanish.
\end{proof}

We determine the multiplicities of $\chi^{((n-1),(1))}$ and $\chi^{((n-1,1),\varnothing)}$ in $\Frob(R_{\mathfrak{B}_n}^{(k,j)}; \mathbf{q};\mathbf{u})$.

\begin{theorem}\label{thm:standard-character-B_n} Fix $n \geq 2$. For any
$k,j$ nonnegative integers,
    \begin{equation}
        \langle \Frob(R_{\mathfrak{B}_n}^{(k,j)}; \mathbf{q};\mathbf{u}),
        s_{(n-1)}(\x)s_{(1)}(\y) \rangle
        = s_{(1)}(\mathbf{q}/\mathbf{u}) + s_{(3)}(\mathbf{q}/\mathbf{u})
        + \cdots + s_{(2n-1)}(\mathbf{q}/\mathbf{u}),
    \end{equation}
    and
        \begin{equation}
        \langle \Frob(R_{\mathfrak{B}_n}^{(k,j)}; \mathbf{q};\mathbf{u}),
        s_{(n-1,1)}(\x)s_{\varnothing}(\y) \rangle
        = s_{(2)}(\mathbf{q}/\mathbf{u}) + s_{(4)}(\mathbf{q}/\mathbf{u})
        + \cdots + s_{(2n-2)}(\mathbf{q}/\mathbf{u}).
    \end{equation}
\end{theorem}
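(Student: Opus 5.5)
We mirror the proof of Theorem~\ref{thm:standard-character}, working first in the purely bosonic case $j=0$, $k\geq 1$, and then transporting the result to all $(k,j)$ via Theorem~\ref{thm:DSS-B_n} and Proposition~\ref{prop:parity}.

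The key identity is that $\Ind_{\mathfrak{B}_{n-1}}^{\mathfrak{B}_n}\mathbbm{1}$ has type $B$ Frobenius image
\[
h_{n-1}(\x)\bigl(h_1(\x)+h_1(\y)\bigr) = s_{(n)}(\x)+s_{(n-1,1)}(\x)+s_{(n-1)}(\x)s_{(1)}(\y).
\]
This holds because inducing from $\mathfrak{B}_{n-1}$ to $\mathfrak{B}_{n-1}\times\mathfrak{B}_1$ gives $\mathbbm{1}\boxtimes(\mathbbm{1}_{\mathfrak{B}_1}\oplus\varepsilon_{\mathfrak{B}_1})$, since $\C[\mathfrak{B}_1]$ is the regular representation of $C_2$; one then applies the product rule for $F^B$ and the Pieri rule. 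By Frobenius reciprocity, Proposition~\ref{prop:parabolic-invariants} with $N=2n$, and Lemma~\ref{lem:trivial}, we obtain
\[
\bigl\langle \Frob(R_{\mathfrak{B}_n}^{(k,0)};\mathbf{q}),\, s_{(n-1,1)}(\x)+s_{(n-1)}(\x)s_{(1)}(\y)\bigr\rangle = s_{(1)}(\mathbf{q})+s_{(2)}(\mathbf{q})+\cdots+s_{(2n-1)}(\mathbf{q}).
\]

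Next we separate the two characters by parity. The proof of Proposition~\ref{prop:parity} shows that $-1\in\mathfrak{B}_n$ acts by $(-1)^{|\mu^{(2)}|}$ on $V^{(\mu^{(1)},\mu^{(2)})}$, and by $(-1)^D$ on the component of total degree $D$. Hence $\chi^{((n-1),(1))}$ can occur only in odd total degree and $\chi^{((n-1,1),\varnothing)}$ only in even total degree. Since $s_{(d)}(\mathbf{q})$ is homogeneous of degree $d$, the odd-degree terms go to $s_{(n-1)}(\x)s_{(1)}(\y)$ and the even-degree terms go to $s_{(n-1,1)}(\x)$. This gives both formulas for $j=0$ and every $k\geq 1$.

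To pass to general $(k,j)$, we use Theorem~\ref{thm:DSS-B_n}. The multiplicity of each of these characters equals $\sum_\lambda c_{\lambda,(\mu^{(1)},\mu^{(2)})}\, s_\lambda(\mathbf{q})$, where the coefficients are independent of $(k,j)$ and $\ell(\lambda)\leq n$. Taking $k\geq n$, the Schur polynomials are linearly independent, so the coefficients are read off: $c_{(d),((n-1),(1))}=1$ for odd $d\leq 2n-1$, $c_{(d),((n-1,1),\varnothing)}=1$ for even $2\leq d\leq 2n-2$, and all other coefficients vanish. Substituting these into equation~\eqref{eq:DSS-B_n} gives the claim for all $(k,j)$, including $k=0$.

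The main obstacle is the first step: making sure the induced character decomposes exactly as claimed, in particular that no $s_{(n)}(\x)$-type correction or extra constituent appears beyond those listed. The trivial summand $s_{(n)}(\x)$ contributes exactly the constant $1$ by Lemma~\ref{lem:trivial}, which cancels the term $1$ in the Hilbert series from Proposition~\ref{prop:parabolic-invariants} and leaves $s_{(1)}(\mathbf{q})+\cdots+s_{(2n-1)}(\mathbf{q})$, as used above.
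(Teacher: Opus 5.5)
Your proof is correct and uses the same ingredients as the paper: the induced character $h_{n-1}(\x)(h_1(\x)+h_1(\y))$, Proposition~\ref{prop:parabolic-invariants}, Lemma~\ref{lem:trivial}, the central element $-1$, and Theorem~\ref{thm:DSS-B_n}. The only difference is the order: you split by total-degree parity directly on the graded module $R_{\mathfrak{B}_n}^{(k,0)}$ and then read off the universal coefficients, whereas the paper first passes to the coefficients $c_{\lambda,(\mu^{(1)},\mu^{(2)})}$ and separates them using their nonnegativity together with Proposition~\ref{prop:parity}; your ordering is slightly more economical, since it needs neither nonnegativity nor the linear independence of super Schur functions.
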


\begin{proof}
    Since the regular representation $\reg_{\mathfrak{B}_1} = \mathbbm{1}_{\mathfrak{B}_1} \oplus
    \varepsilon_{\mathfrak{B}_1}$, induction gives
    \begin{equation}\label{eq:induction-B_n-1}
        \Frob(\Ind_{\mathfrak{B}_{n-1}}^{\mathfrak{B}_n}
        \mathbbm{1}_{\mathfrak{B}_{n-1}})
        = h_{n-1}(\x)\big(h_{1}(\x) + h_1(\y)\big)
        = s_{(n)}(\x) + s_{(n-1,1)}(\x) + s_{(n-1)}(\x)s_{(1)}(\y).
    \end{equation}
    By Frobenius reciprocity and Proposition~\ref{prop:parabolic-invariants} (with $N = 2n$), for every $k \geq 1$,
    \begin{equation}
    \begin{aligned}
        \langle \Frob(R_{\mathfrak{B}_n}^{(k,0)}; \mathbf{q}),\
        s_{(n)}(\x) + s_{(n-1,1)}(\x) + s_{(n-1)}(\x)s_{(1)}(\y) \rangle
        &= \Hilb\big((R_{\mathfrak{B}_n}^{(k,0)})^{\mathfrak{B}_{n-1}};
        \mathbf{q}\big) \\
        &= 1 + s_{(1)}(\mathbf{q}) + \cdots + s_{(2n-1)}(\mathbf{q}).
    \end{aligned}
    \end{equation}
    By Theorem~\ref{thm:DSS-B_n} and the linear independence of the Schur polynomials $s_\lambda(q_1,\ldots,q_k)$ for $\ell(\lambda) \leq n \leq k$, the type $B$ universal coefficients therefore satisfy, for every partition $\lambda$ with $\ell(\lambda) \leq n$,
    \begin{equation}\label{eq:combined-coefficients}
        \sum_{(\mu^{(1)},\mu^{(2)})}
        c_{\lambda,(\mu^{(1)},\mu^{(2)})} =
        \begin{cases}
            1 & \text{if } \lambda = \varnothing \text{ or }
            \lambda = (d) \text{ with } 1 \leq d \leq 2n-1,\\
            0 & \text{otherwise,}
        \end{cases}
    \end{equation}
    where the sum ranges over the bipartitions $((n),\varnothing)$, $((n-1,1),\varnothing)$, and $((n-1),(1))$ only.

    We now separate the three multiplicities.
    Since the coefficients $c_{\lambda,(\mu^{(1)},\mu^{(2)})}$ are nonnegative (Theorem~\ref{thm:DSS-B_n}), equation~\eqref{eq:combined-coefficients} shows that each $s_{(d)}(\mathbf{q}/\mathbf{u})$ belongs entirely to exactly one of the three bipartitions, and that no multi-row $\lambda$ occurs. 
    By Proposition~\ref{prop:parity}, the bipartition $((n-1),(1))$, with $|\mu^{(2)}| = 1$, can only yield $s_{(d)}(\mathbf{q}/\mathbf{u})$ with $d$ odd, while $((n),\varnothing)$ and $((n-1,1),\varnothing)$, with
    $|\mu^{(2)}| = 0$, can only yield those with $d$ even. 
    Hence $s_{(1)} + s_{(3)} + \cdots + s_{(2n-1)}$ is the coefficient of $s_{(n-1)}(\x)s_{(1)}(\y)$. 
    By Lemma~\ref{lem:trivial}, the coefficient of $s_{(n)}(\x)s_{\varnothing}(\y)$ is exactly $1$; the remaining $s_{(2)} + s_{(4)} + \cdots + s_{(2n-2)}$ is therefore the coefficient of $s_{(n-1,1)}(\x)s_{\varnothing}(\y)$, as claimed.
\end{proof}

\begin{remark}\label{rem:one-zero-specialization}
    At $(k,j) = (1,0)$, Theorems~\ref{thm:standard-character} and~\ref{thm:standard-character-B_n} specialize to
    \begin{equation}
        \langle \Frob(R_n^{(1,0)}; q), s_{(n-1,1)}(\z) \rangle = q + q^2 + \cdots + q^{n-1},
    \end{equation}
    \begin{equation}
        \langle \Frob(R_{\mathfrak{B}_n}^{(1,0)}; q), s_{(n-1)}(\x)s_{(1)}(\y) \rangle = q + q^3 + \cdots + q^{2n-1}, 
    \end{equation}
    and
    \begin{equation}
        \langle \Frob(R_{\mathfrak{B}_n}^{(1,0)}; q), s_{(n-1,1)}(\x)s_{\varnothing}(\y) \rangle = q^2 + q^4 + \cdots + q^{2n-2}.
    \end{equation}
    These can be verified directly from the classical formulas of Stanley \cite{MR526968} for $\Frob(R_n^{(1,0)}; q)$ and Stembridge for $\Frob(R_{\mathfrak{B}_n}^{(1,0)}; q)$ (Theorem~\ref{thm:Stembridge}).
    In the sum in equation~\eqref{eq:stembridge}, all the bitableaux $Q$ satisfy $\fmaj(Q) \equiv |\shape(Q^-)| \pmod{2}$, which is the $(k,j) = (1,0)$ instance of the parity constraint of Proposition~\ref{prop:parity}.
\end{remark}

\section*{Acknowledgements}
We would like to thank Sylvie Corteel, Mark Haiman, and Josh Swanson for helpful conversations, and especially Christopher Ryba, for introducing us to Mackey theory and for his help with the proof of Proposition~\ref{prop:typeB-wedges}.
J. L. was partially supported by the National Science Foundation Graduate Research Fellowship DGE-2146752. 

\appendix
\section{Harmonic Spaces}\label{sec:harmonic}

In this appendix we review diagonal harmonic spaces, establish several equivalent characterizations of them, and show that they are isomorphic to coinvariant rings. 
The content of this appendix is similar to the exposition of harmonic spaces in \cite[Section 1.3]{Haiman1994}, \cite{SwansonPreprint}, and \cite{SwansonWallach1}; we state similar (and well known) results adapted for the setting of this paper.

Recall that $S = \C[\bm{x}^{(1)}, \ldots, \bm{x}^{(k)}, \bm{\theta}^{(1)}, \ldots, \bm{\theta}^{(j)}]$, and let $G$ be $\mathfrak{S}_n$ or $\mathfrak{B}_n$, realized as the group of $n \times n$ permutation matrices or $n \times n$ signed permutation matrices, acting diagonally on $S$, with defining ideal $I_G^{(k,j)}$ and coinvariant ring $R_G^{(k,j)} = S/I_G^{(k,j)}$.
For a multidegree $\bm{d} = (r_1,\ldots,r_k;s_1,\ldots,s_j)$, let $S_{\bm{d}}$ denote the span of the monomials of degree $r_\ell$ in $\bm{x}^{(\ell)}$ and degree $s_m$ in $\bm{\theta}^{(m)}$; note that $S_{\bm{d}}$ is finite-dimensional.
Recall from Section~\ref{subsec:defining-ideals} that $I_G^{(k,j)}$ is multihomogeneous.

Haiman credits A. Garsia with observing the benefit of studying the diagonal coinvariant ring via the diagonal harmonics \cite{Haiman1994}, building on earlier connections between coinvariants and harmonics (see for example \cite{BergeronGarsia}).
Haiman \cite[Section 1.3]{Haiman1994} treats two sets of bosonic variables and any finite matrix group; Swanson \cite{SwansonPreprint} and Swanson--Wallach \cite{SwansonWallach1, SwansonWallachII} treat one set of bosonic and one set of fermionic variables, the former for unitary groups (which is no restriction for a finite group by \cite[Lemma 2.2]{SwansonPreprint}) and the latter for pseudo-reflection groups.
We record the setting we need here.
Of the hypotheses on the group, invariance of the apolar form is verified directly for signed permutation matrices in Lemma~\ref{lem:apolar-props}, and holds more generally whenever $G$ consists of unitary matrices \cite[Theorem 4.7]{SwansonPreprint}; the matrix entries being real-valued is used only in the implication (ii) $\Rightarrow$ (iii) of Theorem~\ref{thm:harmonics-pde}. 
In particular, no pseudo-reflection hypothesis is required (indeed, the diagonal action of $G$ on multiple sets of variables is not an action by pseudo-reflections).

\subsection{The apolar form}
We recall the definitions needed for the apolar form at the appropriate level of generality.
Differentiation with respect to a fermionic variable is defined algebraically, by
\begin{equation}\label{eq:interior-product}
    \partial_{\theta_i^{(\ell)}}\, \theta_{i_1}^{(\ell_1)} \cdots \theta_{i_m}^{(\ell_m)}
    = \begin{cases}
        (-1)^{c - 1}\, \theta_{i_1}^{(\ell_1)} \cdots \widehat{\theta_{i_c}^{(\ell_c)}} \cdots \theta_{i_m}^{(\ell_m)}
            & \text{ if $i = i_c$ and $\ell = \ell_c$ for some $c$,}\\
        0 & \text{ otherwise,}
    \end{cases}
\end{equation}
where $\widehat{\theta_{i}^{(\ell)}} $ denotes the omission of that variable.
Then extend $\C[\bm{x}^{(1)}, \ldots, \bm{x}^{(k)}]$-linearly.
The sign can be viewed equivalently as the parity of the number of adjacent transpositions needed to bring the variable $\theta_{i_c}^{(\ell_c)}$ in the ordered product to the leftmost position, so that it can then be contracted by the differential operator.
Each $\partial_{x_i^{(\ell)}}$ and $\partial_{\theta_i^{(m)}}$ satisfies the same commutation and anticommutation relations as the corresponding variable \cite[Lemma 4.2]{SwansonPreprint}.
Consequently, substituting $\partial_z$ for each variable $z$ in a polynomial $P \in S$ is an algebra homomorphism $P \mapsto P(\partial)$, whose image is supercommutative.
In particular, for the polarized power sums $p_{r_1,\ldots,r_k;s_1,\ldots,s_j}(\bm{x}^{(1)}_n, \ldots, \bm{x}^{(k)}_n, \bm{\theta}_n^{(1)}, \ldots, \bm{\theta}_n^{(j)})$, define 
\begin{equation}\label{eq:pps-operator}
        p_{\bm{r};\bm{s}}(\partial)
    := \sum_{i=1}^n \partial_{x_i^{(1)}}^{r_1} \cdots \partial_{x_i^{(k)}}^{r_k}\,
        \partial_{\theta_i^{(1)}}^{s_1} \cdots \partial_{\theta_i^{(j)}}^{s_j}.
\end{equation}
For a polynomial $P \in S$ whose monomials all have the same fermionic degree, denoted by $|P|$, set
\begin{equation}\label{eq:twisted-op}
    \partial_P := (-1)^{\binom{|P|}{2}}\, \overline{P}(\partial),
\end{equation}
where $\overline{P}$ denotes complex conjugation of coefficients. 
For general $P$, write $P = \sum_r P_r$ with $P_r$ of fermionic degree $r$ and set $\partial_P := \sum_r \partial_{P_r}$.
Note that $\partial_P$ and $P(\partial)$ differ by the sign in equation~\eqref{eq:twisted-op} and by the conjugation of coefficients: $P \mapsto P(\partial)$ is $\C$-linear, whereas $P \mapsto \partial_P$ is conjugate-linear, which is what makes the apolar form of Definition~\ref{def:apolar} sesquilinear.
The sign in~\eqref{eq:twisted-op} is needed because each monomial in $\partial_P$ is an ordered product of anticommutative operators.
For monomials $P,Q$ of fermionic degrees $p,q$ we have $\partial_{PQ} = (-1)^{\binom{p+q}{2}}\overline{P}(\partial)\overline{Q}(\partial)$ and $\partial_Q \partial_P = (-1)^{\binom{p}{2}+\binom{q}{2}}\overline{Q}(\partial)\overline{P}(\partial) = (-1)^{\binom{p}{2}+\binom{q}{2}+pq}\overline{P}(\partial)\overline{Q}(\partial)$ by supercommutativity; since $\binom{p+q}{2} = \binom{p}{2}+\binom{q}{2}+pq$, the two sides agree on monomials.
Thus for all $P,Q$ in $S$,
\begin{equation}\label{eq:antihom}
    \partial_{PQ} = \partial_Q\, \partial_P.
\end{equation}
\begin{definition}\label{def:apolar}
The \emph{apolar form} on $S$ is
\begin{equation}\label{eq:inner-product}
    \begin{aligned}
    \langle P, Q \rangle &:= \overline{\big(\partial_P\, Q\big)\big|_{\bm{x} = \bm{\theta} = 0}},
\end{aligned}
\end{equation}
that is, the complex conjugate of the constant term of $\partial_P Q$.
For $P$ of homogeneous fermionic degree, this equals
\begin{equation}
    (-1)^{\binom{|P|}{2}} P(\partial_{x_1^{(1)}},\ldots,
        \partial_{\theta_n^{(j)}}) \cdot \overline{Q}(x_1^{(1)}, \ldots,
        \theta_n^{(j)}) \big|_{\bm{x} = \bm{\theta} = 0}.
\end{equation}
\end{definition}
For $j = 0$, this is the classical extension of Rota's apolar form used in \cite[Section 1.3]{Haiman1994}.
For general $j$, the apolar form is an instance of the Hermitian form that Swanson--Wallach \cite[Section 2.4]{SwansonWallach1} associate to an arbitrary finite subgroup of a general linear group, with the same sesquilinearity convention, applied to the standard positive-definite Hermitian form on the span of the variables (for which signed permutation matrices are already unitary).
The form of Swanson \cite[Definitions 4.3 and 5.1]{SwansonPreprint} is instead defined without the sign of equation~\eqref{eq:twisted-op} and with the opposite sesquilinearity convention; it is therefore definite only up to the sign $(-1)^{\binom{r}{2}}$ on fermionic degree $r$ \cite[Lemma 5.2]{SwansonPreprint}.
The sign in equation~\eqref{eq:twisted-op} is exactly what reconciles these differences, and makes the present form positive definite.

\begin{lemma}\label{lem:apolar-props}
The apolar form~\eqref{eq:inner-product} is linear in $P$, conjugate-linear in $Q$, and satisfies $\langle Q, P\rangle = \overline{\langle P, Q\rangle}$. 
For $\bm{a}, \bm{b} \in \mathbb{Z}_{\geq 0}^{kn}$, and tuples $\bm{T}= (T_1,\ldots,T_j),\bm{U} = (U_1,\ldots,U_j)$ with $T_\ell, U_\ell \subseteq \{1,\ldots,n\}$ for each $\ell$,
\begin{equation}
    \langle \bm{x}^{\bm{a}}\, \bm{\theta}_{\bm{T}}, \bm{x}^{\bm{b}}\, \bm{\theta}_{\bm{U}} \rangle = \begin{cases}
        \bm{a}! & \text{ if } \bm{a} = \bm{b} \text{ and } \bm{T} = \bm{U},\\
        0 & \text{ otherwise,}
    \end{cases}
\end{equation}
where $\bm{x}^{\bm{a}} = \prod_{\ell,i}(x_i^{(\ell)})^{a_i^{(\ell)}}$, $\bm{a}! := \prod_{\ell, i}\, (a_i^{(\ell)})!$, and $\bm{\theta}_{\bm{T}} = \prod_{\ell=1}^j \theta^{(\ell)}_{T_\ell}$ in some fixed global order.
Thus distinct monomials are orthogonal (hence distinct multidegrees are orthogonal), and the form is a positive-definite Hermitian inner product on $S$ and on each $S_{\bm{d}}$. Moreover the form is $G$-invariant.
\end{lemma}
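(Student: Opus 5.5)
The plan is to compute everything on monomials, since every claim reduces to the pairing of two monomials. First I will establish the monomial formula. Take $P = \bm{x}^{\bm{a}}\bm{\theta}_{\bm{T}}$, of fermionic degree $p = \sum_\ell |T_\ell|$. Then $\partial_P = (-1)^{\binom{p}{2}} \prod \partial_{x}^{a}\, \prod \partial_{\theta}$, with the fermionic derivatives in the same fixed global order as in $\bm{\theta}_{\bm{T}}$.

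Apply this to $Q = \bm{x}^{\bm{b}}\bm{\theta}_{\bm{U}}$ and take the constant term. The bosonic part $\prod \partial^{a}\, \bm{x}^{\bm{b}}$ has nonzero constant term only if $\bm{a}=\bm{b}$, and then it equals $\bm{a}!$. The fermionic part $\partial_{\theta_{t_1}}\cdots\partial_{\theta_{t_p}}\,\theta_{u_1}\cdots\theta_{u_q}$ has nonzero constant term only if the two index multisets coincide, i.e. $\bm{T}=\bm{U}$. In that case the innermost operator $\partial_{\theta_{t_p}}$ hits the last variable of the word. Using the rule~\eqref{eq:interior-product}, it first contributes the sign $(-1)^{p-1}$, then the next operator contributes $(-1)^{p-2}$, and so on. The total sign is therefore $(-1)^{\binom{p}{2}}$, which cancels the prefactor and gives exactly $\bm{a}!$. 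This sign bookkeeping is the one delicate step; I would double-check it on $p=2$.

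Next come sesquilinearity and the Hermitian property. Linearity in $P$ holds because $P\mapsto\partial_P$ is conjugate-linear and the outer conjugation undoes this. Conjugate-linearity in $Q$ comes from the outer conjugation. Since monomials span $S$, $\langle Q,P\rangle=\overline{\langle P,Q\rangle}$ follows from the monomial formula, whose values are real and symmetric. Orthogonality of distinct monomials, and hence of distinct multidegrees, is immediate. Positive definiteness holds because in the monomial basis the Gram matrix is diagonal with positive entries $\bm{a}!$. The same is true on each finite-dimensional $S_{\bm{d}}$.

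Finally, $G$-invariance. The cleanest route is to note that the form is the one induced on $\Sym((\C^n)^{\oplus k})\otimes\bigwedge((\C^n)^{\oplus j})$ by the standard Hermitian form on the degree-one span of the variables, for which the monomials are orthogonal with norms $\bm{a}!$. A direct check suffices here instead. A signed permutation $g$ sends each monomial to $\pm$ a monomial, namely $g\cdot\bm{x}^{\bm{a}}\bm{\theta}_{\bm{T}} = \epsilon\,\bm{x}^{\sigma\bm{a}}\bm{\theta}_{\sigma\bm{T}}$ up to reordering signs. The bijection $\bm{a}\mapsto\sigma\bm{a}$ preserves $\bm{a}!$. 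The sign $\epsilon$, together with any reordering sign, is the same real number when computed for $P$ and for $Q$ whenever the two monomials are equal. So $\langle gP,gQ\rangle=\epsilon^2\bm{a}!\,\delta=\langle P,Q\rangle$ on monomials, and by sesquilinearity this extends to all of $S$. The main obstacle is only the sign computation in the first step.
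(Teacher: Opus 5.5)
Your proposal is correct and follows essentially the same route as the paper: you reduce every claim to pairings of monomials, peel the fermionic factors off from the right to get the sign $(-1)^{\binom{p}{2}}$ that cancels the twist in $\partial_P$, and read off sesquilinearity, Hermitian symmetry and positive-definiteness from the diagonal Gram matrix with entries $\bm{a}!$. Your $G$-invariance argument is also the paper's: a signed permutation sends monomials bijectively to real $\pm$ multiples of monomials with the same $\bm{a}!$.
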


\begin{proof}
Sesquilinearity is immediate from equations~\eqref{eq:inner-product} and~\eqref{eq:twisted-op}. 
Let $\mu = \bm{x}^{\bm{a}}\bm{\theta}_{\bm{T}}$ and $\nu = \bm{x}^{\bm{b}}\bm{\theta}_{\bm{U}}$.
Suppose $\mu \neq \nu$. 
There are two cases.
  \begin{itemize}
   \item Suppose that $a_i^{(\ell)} > b_i^{(\ell)}$ for some $\ell, i$, or that $T_\ell \not\subseteq U_\ell$ for some $\ell$.
    In the first case, the factor $\partial_{x_i^{(\ell)}}^{a_i^{(\ell)}}$ of $\mu(\partial)$ annihilates $(x_i^{(\ell)})^{b_i^{(\ell)}}$, since the order of the derivative exceeds the exponent.
    In the second case, choose $t \in T_\ell \setminus U_\ell$; since partial derivatives never introduce variables, the argument of the factor $\partial_{\theta_t^{(\ell)}}$ of $\mu(\partial)$ is free of $\theta_t^{(\ell)}$ when it acts, and is annihilated.
    In either case $\mu(\partial) \nu = 0$.
   \item Otherwise $a_i^{(\ell)} \leq b_i^{(\ell)}$ for all $\ell, i$ and $T_\ell \subseteq U_\ell$ for all $\ell$, and
    \begin{equation}
        \mu(\partial) \nu = \pm \left( \prod_{\ell, i}
      \frac{b_i^{(\ell)}!}{\left( b_i^{(\ell)} - a_i^{(\ell)} \right)!} \right)
      \bm{x}^{\bm{b} - \bm{a}} \bm{\theta}_{\bm{U} \setminus \bm{T}},
    \end{equation}
    where $\bm{U} \setminus \bm{T} = (U_1 \setminus T_1, \dots, U_j \setminus T_j)$. 
    The assumption $\mu \neq \nu$ gives $(\bm{a}, \bm{T}) \neq (\bm{b}, \bm{U})$, so the monomial $\bm{x}^{\bm{b} - \bm{a}} \bm{\theta}_{\bm{U} \setminus \bm{T}}$ has positive degree.
  \end{itemize}
  In both cases $\mu(\partial) \nu$ has zero constant term, so $\langle \mu, \nu \rangle = 0$; in particular monomials of distinct multidegrees are orthogonal. 
For $\mu = \bm{x}^{\bm{a}} \theta_{t_1}^{(\ell_1)} \cdots \theta_{t_h}^{(\ell_h)}$, applying $\mu(\partial)$ to $\mu$ peels off the fermionic factors from the right (note that $\overline{\mu} = \mu$ as $\mu$ is a monomial), producing the sign $(-1)^{(h-1) + (h-2) + \cdots + 0} = (-1)^{\binom{h}{2}}$, which cancels the sign in equation~\eqref{eq:twisted-op}; the bosonic part contributes $\bm{a}!$. 
Hence $\langle \mu, \mu\rangle = \bm{a}!$, and $\langle P, P \rangle = \sum_\mu |c_\mu|^2\, \bm{a}_\mu! > 0$ for $P = \sum_\mu c_\mu \mu \neq 0$. 
Hermitian symmetry now follows by checking on monomials. 
For $G$-invariance: a signed permutation matrix $g$, acting diagonally, sends each monomial $\mu$ to $\pm \mu^g$ for a monomial $\mu^g$ of the same multidegree, and $\mu \mapsto \mu^g$ is a bijection preserving $\bm{a}_\mu!$; since $g$ has real entries it commutes with conjugation, and $\langle g P, g Q \rangle = \langle P, Q \rangle$ follows by expanding in monomials.
\end{proof}

\subsection{Diagonal harmonics}
We define the space of diagonal harmonics at the appropriate level of generality. 
Then we establish the isomorphism with coinvariant spaces, and give characterizations as the solution space to a system of partial differential equations.

\begin{definition}\label{def:harmonics}
The space of \emph{diagonal harmonics (in $k$ sets of bosonic and $j$ sets of fermionic variables)} is the orthogonal complement of the defining ideal with respect to the apolar form:
\begin{equation}
    H_G^{(k,j)} := \big(I_G^{(k,j)}\big)^{\perp} = \big\{ \eta \in S \ :\ \langle \omega, \eta \rangle = 0 \ \text{ for all } \omega \in I_G^{(k,j)} \big\}.
\end{equation}
\end{definition}

Since $I_G^{(k,j)}$ is multihomogeneous and distinct multidegrees are orthogonal, $H_G^{(k,j)}$ is multihomogeneous, and $\eta \in S$ is in  $H_G^{(k,j)}$ if and only if each multihomogeneous component of $\eta$ is. 
Since the form is $G$-invariant and $I_G^{(k,j)}$ is $G$-stable, each component $\big(H_G^{(k,j)}\big)_{\bm{d}}$ is a $G$-submodule of $S_{\bm{d}}$.
We are ready to establish the isomorphism between the harmonics and coinvariants; the proof is very similar to that of \cite[Proposition 1.3.1]{Haiman1994}.
\begin{theorem}\label{thm:harmonics-iso}
The projection $S \to R_G^{(k,j)}$ restricts to an isomorphism of multigraded $G$-modules
\begin{equation}
    H_G^{(k,j)} \xrightarrow{\ \sim\ } R_G^{(k,j)}, \quad \text{ given by } \quad \eta \mapsto \eta + I_G^{(k,j)}.
\end{equation}
\end{theorem}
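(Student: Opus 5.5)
The plan is to work one multidegree at a time and use the apolar form of Lemma~\ref{lem:apolar-props}. Fix a multidegree $\bm{d}$. Since $I_G^{(k,j)}$ is multihomogeneous, we have $I_G^{(k,j)} = \bigoplus_{\bm{d}} I_{\bm{d}}$ with $I_{\bm{d}} := I_G^{(k,j)} \cap S_{\bm{d}}$. Distinct multidegrees are orthogonal, so $\big(H_G^{(k,j)}\big)_{\bm{d}}$ is exactly the orthogonal complement of $I_{\bm{d}}$ inside the finite-dimensional space $S_{\bm{d}}$. The form restricted to $S_{\bm{d}}$ is a positive-definite Hermitian inner product by Lemma~\ref{lem:apolar-props}. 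Hence, by elementary linear algebra,
\begin{equation}
S_{\bm{d}} = I_{\bm{d}} \oplus \big(H_G^{(k,j)}\big)_{\bm{d}}.
\end{equation}
Positive definiteness is used here, since it makes the sum direct and not merely complementary in dimension. Consequently the composite $\big(H_G^{(k,j)}\big)_{\bm{d}} \hookrightarrow S_{\bm{d}} \twoheadrightarrow S_{\bm{d}}/I_{\bm{d}} = \big(R_G^{(k,j)}\big)_{\bm{d}}$ is a linear isomorphism. Summing over $\bm{d}$ and using that both $H_G^{(k,j)}$ and $R_G^{(k,j)}$ are multigraded by these pieces, $\eta \mapsto \eta + I_G^{(k,j)}$ is a bijection $H_G^{(k,j)} \to R_G^{(k,j)}$ that preserves the multigrading.

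Equivariance then comes essentially for free. The projection $S \to S/I_G^{(k,j)}$ is $G$-equivariant because $I_G^{(k,j)}$ is $G$-stable. The remaining point is that $H_G^{(k,j)}$ is a $G$-submodule, so that the restriction is a map of $G$-modules. This follows from $G$-invariance of the form: if $\eta \in H_G^{(k,j)}$, $g \in G$ and $\omega \in I_G^{(k,j)}$, then
\begin{equation}
\langle \omega, g\eta\rangle = \langle g^{-1}\omega, \eta\rangle = 0,
\end{equation}
since $g^{-1}\omega \in I_G^{(k,j)}$. This was already noted after Definition~\ref{def:harmonics}. A $G$-equivariant linear bijection is an isomorphism of $G$-modules, which gives the statement.

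The main (mild) obstacle is bookkeeping rather than substance. One must make sure the orthogonal complement is taken degreewise, because $S$ is infinite-dimensional and the decomposition $S = I \oplus I^\perp$ is only justified on each finite-dimensional $S_{\bm{d}}$. One must also confirm that $H_G^{(k,j)}$ really is the direct sum of its homogeneous pieces. Both points reduce to the orthogonality of distinct multidegrees in Lemma~\ref{lem:apolar-props} together with multihomogeneity of $I_G^{(k,j)}$, which I would spell out explicitly before running the linear algebra.
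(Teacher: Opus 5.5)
Your proposal is correct and follows essentially the same route as the paper. Both arguments use positive-definiteness of the apolar form to decompose each finite-dimensional $S_{\bm{d}}$ orthogonally as $I_{\bm{d}} \oplus H_{\bm{d}}$, conclude that the projection restricts to a linear isomorphism $H_{\bm{d}} \to (R_G^{(k,j)})_{\bm{d}}$ in each multidegree, and obtain $G$-equivariance from the $G$-invariance of the form together with the $G$-stability of $I_G^{(k,j)}$.
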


\begin{proof}
Fix a multidegree $\bm{d}$ and abbreviate $I = I_G^{(k,j)}$ and $H = H_G^{(k,j)}$ for this proof. 
Since the apolar form is a positive-definite Hermitian inner product on the finite-dimensional space $S_{\bm{d}}$ (Lemma~\ref{lem:apolar-props}), we have the orthogonal decomposition $S_{\bm{d}} = I_{\bm{d}} \oplus H_{\bm{d}}$, and both summands are $G$-stable. 
Since $I$ is a $G$-stable ideal, the projection $\pi \colon S \to S/I$ is $G$-equivariant and preserves multidegrees. 
Its restriction to $H_{\bm{d}}$ is injective, since $\ker \pi \cap H_{\bm{d}} = I_{\bm{d}} \cap H_{\bm{d}} = 0$ by positive-definiteness, and surjective onto $(S/I)_{\bm{d}}$, since $\dim H_{\bm{d}} = \dim S_{\bm{d}} - \dim I_{\bm{d}} = \dim (S/I)_{\bm{d}}$. 
Summing over $\bm{d}$ gives the claim.
\end{proof}

\begin{remark}\label{rem:harmonics-lit}
    Theorem~\ref{thm:harmonics-iso} can also be deduced from \cite[Proposition 5.7]{SwansonPreprint}, whose hypothesis is that $G$ consists of unitary matrices, or by the argument of \cite[Lemma 5.4, equations (36), (37)]{SwansonWallach1}; the latter needs only that $G$ is a finite subgroup of a general linear group. 
    Likewise, the equivalence of (i) and (iv) in Theorem~\ref{thm:harmonics-pde} below can be proven as in \cite[Lemma 5.4, equation (35)]{SwansonWallach1}, since the pseudo-reflection hypothesis is not actually used in that argument.
\end{remark}

Since the isomorphisms above respect the multigrading, the multigraded Frobenius series may be computed on the harmonic side: $\Frob(R_n^{(k,j)}; \mathbf{q}; \mathbf{u})$ is obtained by applying $F\ch$ to the multigraded components of $H_n^{(k,j)}$, and likewise $\Frob(R_{\mathfrak{B}_n}^{(k,j)}; \mathbf{q}; \mathbf{u})$ from $F^{B}\ch$ applied to those of $H_{\mathfrak{B}_n}^{(k,j)}$.

We now give several descriptions of $H_G^{(k,j)}$.
Recall that $\partial_{\theta_i^{(t)}}^2 = 0$, so that only exponents $s_t \in \{0,1\}$ occur in equation~\eqref{eq:pps-operator}.

\begin{theorem}\label{thm:harmonics-pde}
For $\eta \in S$, the following are equivalent:
\begin{enumerate}
\renewcommand{\labelenumi}{(\roman{enumi})}
    \item $\eta \in H_G^{(k,j)}$;
    \item $\partial_\omega\, \eta = 0$ for all $\omega \in I_G^{(k,j)}$;
    \item $q(\partial)\, \eta = 0$ for all $q \in S_+^{G}$;
    \item $p(\partial)\, \eta = 0$ for every polarized power sum $p$ in the generating set of equation~\eqref{eq:gen-set-A}, for $G = \mathfrak{S}_n$, or of equation~\eqref{eq:gen-set-B}, for $G = \mathfrak{B}_n$.
\end{enumerate}
\end{theorem}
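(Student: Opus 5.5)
I will prove the cycle of implications (i) $\Leftrightarrow$ (ii) $\Rightarrow$ (iii) $\Rightarrow$ (iv) $\Rightarrow$ (ii). The key identity is the adjointness of multiplication and differentiation under the apolar form. For $P, Q, \eta \in S$, equation~\eqref{eq:antihom} gives $\partial_{PQ} = \partial_Q\partial_P$. Hence
\begin{equation*}
\langle PQ, \eta\rangle = \overline{(\partial_Q \partial_P \eta)|_0} = \langle Q, \partial_P \eta\rangle .
\end{equation*}

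\textbf{(i) $\Leftrightarrow$ (ii).} Suppose $\partial_\omega \eta = 0$ for all $\omega \in I$. Taking constant terms gives $\langle \omega, \eta\rangle = 0$, so $\eta \in H$.

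Conversely, suppose $\eta \in H$ and $\omega \in I$. For every monomial $Q$, the product $\omega Q$ lies in $I$, so the identity gives $\langle Q, \partial_\omega \eta\rangle = \langle \omega Q, \eta\rangle = 0$. The form is positive definite and monomials form an orthogonal basis (Lemma~\ref{lem:apolar-props}), so $\partial_\omega \eta = 0$.

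\textbf{(ii) $\Rightarrow$ (iii).} Take $q \in S_+^G$. Since $q \in I$, we have $\partial_q \eta = 0$. It remains to pass from $\partial_q$ to $q(\partial)$. The space $S_+^G$ is spanned by multihomogeneous invariants, so we may assume $q$ has homogeneous fermionic degree, which removes the sign in~\eqref{eq:twisted-op}. Then $\partial_q = \pm \overline{q}(\partial)$.

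The conjugation is where the real-matrix hypothesis enters. Because $G$ acts by real matrices, $\overline{q}$ is again $G$-invariant without constant term, so $\overline{q} \in S_+^G$. Since $q \mapsto \overline{q}$ is an involution on $S_+^G$, we get $q(\partial)\eta = 0$ for every $q \in S_+^G$.

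\textbf{(iii) $\Rightarrow$ (iv).} This is immediate, since the polarized power sums in \eqref{eq:gen-set-A} and \eqref{eq:gen-set-B} lie in $S_+^G$.

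\textbf{(iv) $\Rightarrow$ (ii).} This is the step I expect to need the most care with signs. By Propositions~\ref{prop:Weyl-bf} and~\ref{prop:Weyl-B_n-bf}, every $\omega \in I$ is a finite sum $\sum_a c_a\, m_a\, p_a$. Here each $m_a$ is a monomial, each $p_a$ is a generator from the relevant set, and $c_a \in \C$.

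Each summand is multihomogeneous with homogeneous fermionic degree, and $\partial$ is conjugate-linear, so $\partial_\omega$ is a combination of the operators $\partial_{m_a p_a}$. By~\eqref{eq:antihom}, $\partial_{m_a p_a} = \partial_{p_a}\partial_{m_a}$, which places the generator on the wrong side to apply (iv) directly. I will instead use the reverse form of~\eqref{eq:antihom}, $\partial_{p_a m_a} = \partial_{m_a}\partial_{p_a}$. The products $p_a m_a$ and $m_a p_a$ agree up to the supercommutation sign $\pm 1$, so $\partial_{m_a p_a} = \pm\,\partial_{m_a}\partial_{p_a}$.

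The power sums have real coefficients, so $\partial_{p_a} = \pm p_a(\partial)$, and $p_a(\partial)\eta = 0$ by (iv). Therefore every term $\partial_{m_a p_a}\eta$ vanishes, hence $\partial_\omega \eta = 0$, and (ii) follows.

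The only genuine subtleties are these two sign and conjugation bookkeeping points. Everything else is routine orthogonality.
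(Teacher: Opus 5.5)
Your proof is correct and follows essentially the same route as the paper: the same cycle (i)$\Leftrightarrow$(ii)$\Rightarrow$(iii)$\Rightarrow$(iv)$\Rightarrow$(ii), using equation~\eqref{eq:antihom} for the adjointness step, the real-matrix hypothesis to pass from $\partial_{\overline{q}}$ to $q(\partial)$, and a supercommutation sign to let the generator's operator act on $\eta$ first. Where the paper commutes the operators $p_i(\partial)$ and $\overline{f_i}(\partial)$, you commute $m_a p_a$ in $S$; these are equivalent. One wording point: restricting to homogeneous fermionic degree does not remove the sign in~\eqref{eq:twisted-op}, it only makes it a single global $\pm$, which is all you use.
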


\begin{proof}
(ii) $\Rightarrow$ (i): By equation~\eqref{eq:inner-product}, $\langle \omega, \eta\rangle$ is the complex conjugate of the constant term of $\partial_\omega \eta$. 
Hence $\partial_\omega \eta = 0$ implies that $\langle \omega, \eta\rangle = 0$.

(i) $\Rightarrow$ (ii): Fix $\omega \in I_G^{(k,j)}$. For every monomial $\mu \in S$, we have $\omega \mu \in I_G^{(k,j)}$, so by (i) and equation~\eqref{eq:antihom},
\begin{equation}
    0 = \langle \omega\mu,\, \eta \rangle
      = \overline{\big(\partial_\mu\, (\partial_\omega\, \eta)\big)\big|_{\bm{x} = \bm{\theta} = 0}}.
\end{equation}
As $\mu$ ranges over all monomials, the constants $\big(\partial_\mu F\big)\big|_{\bm{x}=\bm{\theta}=0}$ are nonzero scalar multiples of the coefficients of $F$; applying this to $F = \partial_\omega \eta$ gives $\partial_\omega \eta = 0$.

(ii) $\Rightarrow$ (iii): Let $q \in S_+^{G}$. 
Since the action of $G$ preserves each multihomogeneous component of $S$, the space $S_+^{G}$ is multihomogeneous; write $q = \sum_{\bm{d}} q_{\bm{d}}$ with each multihomogeneous component $q_{\bm{d}} \in S_+^{G}$. 
In particular, each $q_{\bm{d}}$ has homogeneous fermionic degree.
Since $G$ consists of real-valued matrices, $S_+^{G}$ is stable under complex conjugation of coefficients, so $\overline{q_{\bm{d}}} \in S_+^{G} \subseteq I_G^{(k,j)}$ for every $\bm{d}$. 
Since $\overline{q_{\bm{d}}}$ has the same fermionic degree as $q_{\bm{d}}$ and $\overline{\overline{q_{\bm{d}}}} = q_{\bm{d}}$, equation~\eqref{eq:twisted-op} gives
\begin{equation}
    \partial_{\overline{q_{\bm{d}}}}
    = (-1)^{\binom{|q_{\bm{d}}|}{2}}\, q_{\bm{d}}(\partial).
\end{equation}
Applying (ii) with $\omega = \overline{q_{\bm{d}}}$ yields $q_{\bm{d}}(\partial)\, \eta = 0$ for every $\bm{d}$, and summing over $\bm{d}$ gives $q(\partial)\, \eta = 0$.

(iii) $\Rightarrow$ (iv): The polarized power sums in question lie in $S_+^{G}$.

(iv) $\Rightarrow$ (ii): Let $\omega \in I_G^{(k,j)}$. 
By Proposition~\ref{prop:Weyl-bf}, for $G = \mathfrak{S}_n$, or by Proposition~\ref{prop:Weyl-B_n-bf}, for $G = \mathfrak{B}_n$, we may write $\omega = \sum_i f_i p_i$ with each $p_i$ in the relevant generating set and $f_i \in S$, which we may take to be multihomogeneous. 
By equations~\eqref{eq:antihom} and~\eqref{eq:twisted-op}, the real coefficients of the $p_i$, and the supercommutativity of the image of $P \mapsto P(\partial)$, we write
\begin{equation}
    \partial_{f_i p_i} = \partial_{p_i}\, \partial_{f_i}
        = \pm\, p_i(\partial)\, \overline{f_i}(\partial)
        = \pm\, \overline{f_i}(\partial)\, p_i(\partial),
\end{equation}
so $\partial_{f_i p_i}\, \eta = \pm\, \overline{f_i}(\partial)\big(p_i(\partial)\, \eta\big) = 0$ by (iv). 
Summing over $i$ gives $\partial_\omega \eta = 0$.
\end{proof}

For clarity, we explicitly record the description of harmonics as the solution space to a system of partial differential equations. 
\begin{corollary}\label{cor:harmonics-explicit}
The harmonic spaces may be expressed as solutions to a system of partial differential equations:
\begin{align}
    H_n^{(k,j)}
        &= \left\{ \eta \in S \ \middle|\ 
        \begin{array}{l}
              p_{\bm{r};\bm{s}}(\partial)\,\eta = 0, \\
              \text{for all } 1 \leq \textstyle\sum_t r_t + \sum_t s_t \leq n,\\ 
              r_1,\ldots,r_k \in \Z_{\geq 0}, s_1,\ldots,s_j \in \{0,1\} 
            \label{eq:harm-A}
        \end{array}
        \right\},\\
    H_{\mathfrak{B}_n}^{(k,j)}
        &= \left\{ \eta \in S \ \middle|\ 
        \begin{array}{l}
             p_{\bm{r};\bm{s}}(\partial)\,\eta = 0,  \\
             \text{for all } 2 \leq \textstyle\sum_t r_t + \sum_t s_t \leq 2n \text{ even},\\ 
             r_1,\ldots,r_k \in \Z_{\geq 0}, s_1,\ldots,s_j \in \{0,1\}
        \end{array}
              \right\}.
            \label{eq:harm-B}
\end{align}
\end{corollary}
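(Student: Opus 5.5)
This is essentially a direct corollary of Theorem~\ref{thm:harmonics-pde}, specifically the equivalence (i) $\Leftrightarrow$ (iv). The plan is to unpack condition (iv) using the generating sets of Propositions~\ref{prop:Weyl-bf} and~\ref{prop:Weyl-B_n-bf}.

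First, recall that $H_G^{(k,j)}$ is defined in Definition~\ref{def:harmonics} as $(I_G^{(k,j)})^\perp$, which is condition (i) of Theorem~\ref{thm:harmonics-pde}. That theorem shows (i) is equivalent to (iv): $p(\partial)\eta = 0$ for every polarized power sum $p$ in the relevant generating set. I would then write out these generating sets explicitly.
\begin{itemize}
\item For $G=\mathfrak{S}_n$, equation~\eqref{eq:gen-set-A} consists of the $p_{\bm r;\bm s}$ with $1\le \sum_t r_t+\sum_t s_t\le n$, $r_t\in\Z_{\ge0}$ and $s_t\in\{0,1\}$.
\item For $G=\mathfrak{B}_n$, equation~\eqref{eq:gen-set-B} is the same family, but with total degree even and between $2$ and $2n$.
\end{itemize}
Substituting these into (iv), and interpreting $p_{\bm r;\bm s}(\partial)$ via equation~\eqref{eq:pps-operator}, gives precisely the two displayed descriptions~\eqref{eq:harm-A} and~\eqref{eq:harm-B}.

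The one small point to check is that $p_{\bm r;\bm s}(\partial)$ as written in~\eqref{eq:pps-operator} agrees with the homomorphism $P\mapsto P(\partial)$ applied to $p_{\bm r;\bm s}$. This holds because that map is an algebra homomorphism. Inside each summand the fermionic factors appear in the same fixed order as in the monomial, so no extra sign enters. Also, the coefficients are real, so no conjugation issue arises.

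I expect no real obstacle: all the analytic content (the orthogonality, the reduction from the ideal to its generators, and the real-coefficient step) is already in Theorem~\ref{thm:harmonics-pde}. As a sanity check, I would specialize the $\mathfrak{B}_n$ case to $(k,j)=(0,3)$. Only total degree $2$ survives there, since fermionic exponents are $0$ or $1$ and there are just three fermionic sets. The conditions are then the three operators $\sum_i\partial_{\theta_i}\partial_{\xi_i}$, $\sum_i\partial_{\theta_i}\partial_{\rho_i}$ and $\sum_i\partial_{\xi_i}\partial_{\rho_i}$, which recovers Proposition~\ref{prop:B03-harmonics} as claimed.
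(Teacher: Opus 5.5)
Your proposal is correct and follows the paper's own route exactly: the corollary is the equivalence (i)~$\Leftrightarrow$~(iv) of Theorem~\ref{thm:harmonics-pde}, with the generating sets of equations~\eqref{eq:gen-set-A} and~\eqref{eq:gen-set-B} written out. Your $(k,j)=(0,3)$ sanity check recovering Proposition~\ref{prop:B03-harmonics} is also right.
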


\subsection{Polarization operators}\label{subsec:polarization}

The general linear Lie superalgebra $\mathfrak{gl}(k|j)$ acts on $S$ by superderivations. 
Recall from \cite[Section 1.1]{ChengWangBook} that a linear map $D \colon S \to S$ is a \emph{superderivation of parity $|D| \in \Z_2$} if $D$ shifts the $\Z_2$-grading of $S$ by $|D|$ and satisfies $D(fh) = D(f)\,h + (-1)^{|D|\,|f|} f\, D(h)$ for homogeneous $f, h \in S$, where the parity $|f| \in \Z_2$ of $f$ is its fermionic degree, as in equation~\eqref{eq:twisted-op}, reduced mod $2$.
An even superderivation ($|D| = 0$) is a derivation in the usual sense.
For a thorough introduction to Lie superalgebras, we refer to \cite{ChengWangBook}.
For $1 \leq a,b \leq k$ and $1 \leq c,d \leq j$, we have \emph{polarization operators}
\begin{equation}\label{eq:superderivations}
    E_{a,b} = \sum_{i=1}^n x_i^{(a)} \partial_{x_i^{(b)}},\quad
    E_{a,d'} = \sum_{i=1}^n x_i^{(a)} \partial_{\theta_i^{(d)}},\quad
    E_{c',b} = \sum_{i=1}^n \theta_i^{(c)} \partial_{x_i^{(b)}},\quad
    E_{c',d'} = \sum_{i=1}^n \theta_i^{(c)} \partial_{\theta_i^{(d)}},
\end{equation}
which realize the action of $\mathfrak{gl}(k|j)$ on $S$ \cite[Lemma 5.13]{ChengWangBook}. 

\begin{lemma}\label{lem:polarization-commute}
    The polarization operators from equation~\eqref{eq:superderivations} commute with the diagonal actions of $\mathfrak{S}_n$ and $\mathfrak{B}_n$.
\end{lemma}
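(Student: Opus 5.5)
The plan is a direct computation: write a group element $g=(\epsilon,\sigma)$ as an operator on $S$ and show it commutes with each polarization operator in \eqref{eq:superderivations}. Since the generators $E$ are sums over $i$ of a variable in one set times a derivative in another set, both indexed by the same $i$, it suffices to understand how $g$ conjugates the multiplication operator by a variable and the derivative with respect to a variable. Because $\mathfrak{S}_n\subseteq\mathfrak{B}_n$ (take all $\epsilon_i=+1$), it is enough to treat $\mathfrak{B}_n$.

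\textbf{Step 1.} The action of $g$ on $S$ is an algebra automorphism, determined by its values on variables. It is even: it preserves fermionic degree. For any variable $z_i$ (bosonic $x_i^{(a)}$ or fermionic $\theta_i^{(c)}$), we have $g\, z_i\, g^{-1} = \epsilon_i z_{\sigma(i)}$ as multiplication operators. This holds because $g(z_i f) = g(z_i)\,g(f) = \epsilon_i z_{\sigma(i)}\, g(f)$.

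\textbf{Step 2.} The next claim is $g\,\partial_{z_i}\,g^{-1} = \epsilon_i\,\partial_{z_{\sigma(i)}}$. Both sides are superderivations of the same parity as $z_i$: conjugating a superderivation by an even automorphism gives a superderivation. A superderivation of $S$ is determined by its values on the variables, since $S$ is generated by them. So it suffices to check the identity on each variable $z'_m$. On the left we get $g\,\partial_{z_i}(\epsilon_m^{-1}... )$. Concretely, $g^{-1}z'_m = \epsilon_{\sigma^{-1}(m)} z'_{\sigma^{-1}(m)}$, and $\partial_{z_i}$ of this is $\epsilon_i\delta$, where $\delta=1$ exactly when $z'=z$ (same set) and $\sigma^{-1}(m)=i$, and $0$ otherwise. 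This agrees with $\epsilon_i\partial_{z_{\sigma(i)}} z'_m$.

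\textbf{Step 3.} Combining the two steps gives
\[
g\,E\,g^{-1} = \sum_{i=1}^n \epsilon_i^2\, z_{\sigma(i)}\,\partial_{w_{\sigma(i)}} = \sum_{i=1}^n z_i\,\partial_{w_i} = E,
\]
using $\epsilon_i^2=1$ and reindexing over the permutation $\sigma$.

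The only delicate point is Step 2 in the fermionic case: we must confirm that a conjugated odd superderivation is still an odd superderivation, so that agreement on generators suffices. This follows because $g$ is an even automorphism, so the sign $(-1)^{|D||f|}$ is unchanged when $f$ is replaced by $g^{-1}f$.
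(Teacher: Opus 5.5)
Your proof is correct, but it is organized differently from the paper's.

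\textbf{The paper's route.} The paper never conjugates the individual factors of $E$. It observes that $gEg^{-1}$ is itself a superderivation of the same parity as $E$, because $g$ is an even automorphism. So it suffices to compare $gEg^{-1}$ and $E$ on the variables. Identify the span of the variables with $\C^n \otimes \C^{k|j}$. The group acts on the first factor, and $E$ acts on the second (e.g.\ $E_{a,b}\,x_i^{(c)} = \delta_{bc}\,x_i^{(a)}$), so the two commute there.

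\textbf{Your route.} You apply the principle ``a superderivation is determined by its values on generators'' one level down, to each $\partial_{z_i}$ rather than to $E$. You compute that the partial derivatives transform exactly like the variables. You then reassemble $E$ using $\epsilon_i^2=1$ and reindexing by $\sigma$.

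\textbf{What each approach buys.} Your version gives an explicit formula for how $G$ acts on differential operators, which is the kind of fact implicitly used elsewhere (e.g.\ for the operators $p(\partial)$). It also makes visible exactly where the orthogonality of signed permutation matrices enters. The paper's version is shorter and uses nothing about $G$ beyond $G \subseteq \GL_n$, so it shows the polarization operators commute with the diagonal action of all of $\GL_n$. In your framework, that generalization needs two further observations: for a general $g$ the derivatives transform by the inverse transpose, and $\sum_i z_i\,\partial_{w_i}$ is invariant as the canonical pairing.

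\textbf{A small fix.} The phrase ``$g\,\partial_{z_i}(\epsilon_m^{-1}\ldots)$'' in Step~2 is garbled and should be deleted. The computation that follows ``Concretely'' is the right one.
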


\begin{proof}
Identify the span of the variables with $\C^n \otimes \C^{k|j}$, where $\C^{k|j} = \C^k \oplus \C^j$ with $\C^k$ even and $\C^j$ odd; then $G \subseteq \GL_n$ acts on the first tensor factor and the polarization operators act on the second, so the two actions commute on the variables.
    Since $g \in G$ acts on $S$ as an (even) algebra automorphism, $g E g^{-1}$ is again a superderivation of $S$; it agrees with $E$ on the variables, and a superderivation is determined by its values on the variables, so $g E g^{-1} = E$.
\end{proof}

\begin{lemma}\label{lem:polarization-stability}
    Let $G$ be $\mathfrak{S}_n$ or $\mathfrak{B}_n$ acting diagonally on $S$. Then:
    \begin{enumerate}
        \item[(a)] The defining ideal $I_G^{(k,j)}$ is stable under each polarization operator from equation~\eqref{eq:superderivations}.
        \item[(b)] The harmonic space $H_G^{(k,j)}$ is stable under each polarization operator, and hence is a $\mathfrak{gl}(k|j)$-submodule of $S$. 
        \item[(c)] $H_G^{(0,j)}$ is a $\GL_j$-submodule for the diagonal action of $\GL_j$ on $S = \C[\bm{\theta}^{(1)}, \ldots, \bm{\theta}^{(j)}]$.
    \end{enumerate}
\end{lemma}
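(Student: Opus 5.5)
For part (a), the plan is to show that each polarization operator $E$ maps the generating set of Proposition~\ref{prop:Weyl-bf} (resp.\ Proposition~\ref{prop:Weyl-B_n-bf}) into $I_G^{(k,j)}$. Since $E$ is a superderivation, for a generator $p$ and any $f \in S$ we have $E(fp) = E(f)\,p \pm f\,E(p)$. The first term already lies in the ideal, so it suffices to check $E(p) \in I_G^{(k,j)}$.

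For this I will use Lemma~\ref{lem:polarization-commute}: $E$ commutes with the action of $G$, so $E$ maps $G$-invariants to $G$-invariants. Moreover, $E$ sends a multihomogeneous element of positive total degree to one of the same total degree, since each operator replaces one variable by another. Hence $E(S_+^G) \subseteq S_+^G \subseteq I_G^{(k,j)}$, and the superderivation rule extends this to the whole ideal. If one prefers an explicit check, $E_{a,b}$ applied to a polarized power sum gives a multiple of another polarized power sum of the same total degree, possibly zero (for example when $\theta^2 = 0$). That power sum has the same total degree and hence the same parity, so it lies in the relevant generating set.

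For part (b), I will take adjoints with respect to the apolar form. For monomials one has $\langle E_{a,b}P, Q\rangle = \pm\langle P, E_{b,a}Q\rangle$, and similarly for the odd operators, which is the adjoint pair from multiplication against differentiation (a single factor $x\partial_y$ is adjoint to $y\partial_x$). I expect this to be the main obstacle, because the signs $(-1)^{\binom{|P|}{2}}$ in equation~\eqref{eq:twisted-op} must be tracked through the fermionic case. I will handle it by checking the identity directly on monomials using Lemma~\ref{lem:apolar-props}: both sides are nonzero only when the monomials correspond under swapping one variable, and the signs can be compared by moving that variable to a fixed position.

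Granting this, if $\eta \in H$ and $\omega \in I$, then $\langle \omega, E\eta\rangle = \pm\langle E^\dagger \omega, \eta\rangle = 0$ by (a), since $E^\dagger$ is again a polarization operator. Therefore $H$ is stable under each polarization operator. As an alternative to the adjoint argument, I can use the PDE characterization of Corollary~\ref{cor:harmonics-explicit}. The commutator of $E$ with $p(\partial)$ is a linear combination of operators $p'(\partial)$ for polarized power sums $p'$ of the same total degree. Hence $p(\partial)E\eta = \pm E\,p(\partial)\eta + \sum c\,p'(\partial)\eta = 0$, which gives (b) directly.

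For part (c), with $k = 0$ all the operators $E_{c',d'}$ are even derivations spanning $\mathfrak{gl}_j$. The diagonal action of $g \in \GL_j$ on the degree-one span $\C^n \otimes \C^j$ extends to algebra automorphisms of the exterior algebra $S$. Each multihomogeneous-degree total piece $S_d$ is a finite-dimensional polynomial representation of $\GL_j$, and the subspace $H_d$ is stable under its Lie algebra by (b). Since $\GL_j$ is connected, a subspace stable under the Lie algebra action is stable under the group, because the group is generated by the exponentials $\exp(tE)$. Alternatively, $I$ is generated by $S_+^{\mathfrak{B}_n}$ (resp.\ $S_+^{\mathfrak{S}_n}$), which is $\GL_j$-stable since the two actions commute. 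The apolar form is invariant under the unitary group $U_j$, so $H = I^\perp$ is $U_j$-stable, and by Zariski density of $U_j$ in $\GL_j$ it is $\GL_j$-stable.
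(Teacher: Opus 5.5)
Your proposal is correct. Parts (a) and (c) follow the same lines as the paper. For (a) the paper only cites \cite{Lentfer-Supersymmetry}, and your argument is exactly what that citation contains: $E$ commutes with $G$ and preserves total degree, so $E(S_+^G)\subseteq S_+^G$, and the superderivation rule then carries this to the ideal. For (c) your exponentiation argument spells out the paper's one-word ``equivalently.''

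The genuine difference is in (b). The paper does not use (a) there at all. It works from the PDE description of Corollary~\ref{cor:harmonics-explicit} and shows that $[p(\partial),E]$ is a multiple of some $p'(\partial)$ in the same generating set, which is your fallback argument. Your main route instead reduces (b) to (a) by adjointness, $\langle \omega, E\eta\rangle = \langle E^{\dagger}\omega,\eta\rangle$.

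The sign issue you expected to be the main obstacle does not arise. Lemma~\ref{lem:apolar-props} makes distinct monomials orthogonal with $\langle \mu,\mu\rangle=\bm{a}!$. From this, a one-line check on monomials shows that left multiplication by $x_i^{(\ell)}$ or $\theta_i^{(\ell)}$ is exactly adjoint to $\partial_{x_i^{(\ell)}}$ or $\partial_{\theta_i^{(\ell)}}$. Hence $E_{\alpha,\beta}^{\dagger}=E_{\beta,\alpha}$ with no sign at all; this is precisely what the factor $(-1)^{\binom{|P|}{2}}$ in \eqref{eq:twisted-op} secures.

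What your route buys is independence from explicit invariants. It needs neither the generating sets of Propositions~\ref{prop:Weyl-bf} and~\ref{prop:Weyl-B_n-bf} nor Theorem~\ref{thm:harmonics-pde}, so it works verbatim for any finite $G\subset\GL_n$ acting diagonally. The paper's route avoids any adjoint computation and does not depend on (a), at the price of relying on the explicit polarized power-sum generators.
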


\begin{proof}
    (a) This is established in the proof of \cite[Theorem 1.1]{Lentfer-Supersymmetry}, where it is shown that, for any finite $G \subset \GL_n$, the invariant space $S_+^G$ and hence the ideal $\langle S_+^G \rangle$ are $U(\mathfrak{gl}(k|j)) \otimes \C[G]$-submodules of $S$.

(b) By Corollary~\ref{cor:harmonics-explicit}, $\eta \in S$ is in $H_G^{(k,j)}$ if and only if $p(\partial)\eta = 0$ for every $p$ in the relevant generating set of equation~\eqref{eq:gen-set-A} or~\eqref{eq:gen-set-B}.
    We claim that for each polarization operator $E$ and each such $p$, the supercommutator $[p(\partial), E]$ is a scalar multiple of $p'(\partial)$ for some $p'$ (possibly $p$ itself) in the same generating set, or is zero.
    Indeed, $\ad(E)$ sends
    $\partial_{x_i^{(a)}} \mapsto -\partial_{x_i^{(b)}}$ for $E = E_{a,b}$,
    $\partial_{x_i^{(a)}} \mapsto -\partial_{\theta_i^{(d)}}$ for $E = E_{a,d'}$,
    $\partial_{\theta_i^{(c)}} \mapsto \partial_{x_i^{(b)}}$ for $E = E_{c',b}$, and
    $\partial_{\theta_i^{(c)}} \mapsto -\partial_{\theta_i^{(d)}}$ for $E = E_{c',d'}$,
    and annihilates the remaining derivatives. 
    Since $\ad(E)$ is a superderivation, applying it to each summand of $p(\partial)$ lowers one exponent by $1$ and raises another (possibly the same) by $1$, leaving the total degree (and hence the degree and the parity condition) unchanged (except it becomes $0$ if a fermionic exponent would exceed $1$).
    This proves the claim.
    Now for such an $\eta$ and any polarization operator $E$,
    \begin{equation}
        p(\partial) \left( E \eta \right)= (-1)^{|p(\partial)| \, |E|} E \big( p(\partial) \eta \big) + \big[ p(\partial), E \big] \eta = 0,
    \end{equation}
    since $p(\partial)\eta = 0$ and $[p(\partial), E]\,\eta$ is a scalar multiple of $p'(\partial)\eta = 0$.
    Hence $E\eta \in H_G^{(k,j)}$.

    (c) By (b), $H_G^{(0,j)}$ is a $\mathfrak{gl}(j)$-submodule of $S$; equivalently, it is a $\GL_j$-submodule.
\end{proof}

\section{Data for \texorpdfstring{$R_{\mathfrak{B}_n}^{(0,3)}$}{RBn(0,3)}}\label{sec:appendix}

By Theorem~\ref{thm:DSS-B_n}, we can write a type $B$ Frobenius series in relatively compact form. 
Any universal series coefficient $c_{\lambda,(\mu^{(1)},\mu^{(2)})}$ (of Theorem~\ref{thm:DSS-B_n}) determined here for a fixed $n$ will hold for all $(k,j)$.
All data were computed in SageMath and Macaulay2, by modifying \cite{Zabrocki-code} and  \cite{Bergeron-code}.

\subsection{Frobenius series of \texorpdfstring{$R_{\mathfrak{B}_n}^{(0,3)}$}{RBn(0,3)}}
We give examples of $\Frob(R_{\mathfrak{B}_n}^{(0,3)};u,v,w)$ for $1 \leq n \leq 4$. 
To read a Frobenius series here, the super Schur functions $s_\lambda(0/ u,v,w) = s_{\lambda'}(u,v,w)$ are written as $s_\lambda$ first. 
(By Theorem~\ref{thm:DSS-B_n}, $\lambda \in P(0,3,n)$, i.e., $\ell(\lambda) \leq n$, and $\lambda_1 \leq 3$.)
Subsequently, the type $B$ Frobenius character is written as $s_{(\mu^{(1)},\mu^{(2)})}(\x,\y)$.

\begin{equation}
\Frob(R_{\mathfrak{B}_1}^{(0,3)};u,v,w) = s_{(1)}s_{\varnothing,(1)}(\x,\y) + s_{\varnothing}s_{(1),\varnothing}(\x,\y)
\end{equation}

\begin{equation}
\begin{aligned}
\Frob(R_{\mathfrak{B}_2}^{(0,3)};u,v,w) &= s_{(1,1)}s_{\varnothing,(1,1)}(\x,\y)\\ 
&+ s_{(2)}s_{\varnothing,(2)}(\x,\y)\\ 
&+ (s_{(3)} + s_{(1)})s_{(1),(1)}(\x,\y)\\ 
&+ s_{(2)}s_{(1,1),\varnothing}(\x,\y)\\ 
&+ s_{\varnothing}s_{(2),\varnothing}(\x,\y)
\end{aligned}
\end{equation}

\begin{equation}
\begin{aligned}
\Frob(R_{\mathfrak{B}_3}^{(0,3)};u,v,w) &= s_{(1,1,1)}s_{\varnothing,(1,1,1)}(\x,\y)\\ 
&+ s_{(2,1)}s_{\varnothing,(2,1)}(\x,\y)\\ 
&+ s_{(3)}s_{\varnothing,(3)}(\x,\y)\\ 
&+ (s_{(3,1)} + s_{(1,1)})s_{(1),(1,1)}(\x,\y)\\ 
&+ (s_{(3,1)} + s_{(2)})s_{(1),(2)}(\x,\y)\\ 
&+ (s_{(3)} + s_{(2,1)})s_{(1,1),(1)}(\x,\y)\\ 
&+ s_{(3,1)}s_{(1,1,1),\varnothing}(\x,\y)\\ 
&+ (s_{(3)} + s_{(1)})s_{(2),(1)}(\x,\y)\\ 
&+ s_{(2)}s_{(2,1),\varnothing}(\x,\y)\\ 
&+ s_{\varnothing}s_{(3),\varnothing}(\x,\y)
\end{aligned}
\end{equation}

\begin{equation}
\begin{aligned}
\Frob(R_{\mathfrak{B}_4}^{(0,3)};u,v,w) &= 
s_{(1,1,1,1)}s_{\varnothing,(1,1,1,1)}(\x,\y)\\ 
&+ s_{(2,1,1)}s_{\varnothing,(2,1,1)}(\x,\y)\\ 
&+ s_{(2,2)}s_{\varnothing,(2,2)}(\x,\y)\\ 
&+ s_{(3,1)}s_{\varnothing,(3,1)}(\x,\y)\\ 
&+ s_{(3,3)}s_{\varnothing,(4)}(\x,\y)\\ 
&+ (s_{(3,1,1)} + s_{(1,1,1)})s_{(1),(1,1,1)}(\x,\y)\\ 
&+ (s_{(3,2)} + s_{(3,1,1)} + s_{(2,1)})s_{(1),(2,1)}(\x,\y)\\ 
&+ (s_{(3,2)} + s_{(3)})s_{(1),(3)}(\x,\y)\\
&+ (s_{(3,1)} + s_{(2,1,1)})s_{(1,1),(1,1)}(\x,\y)\\
&+ (s_{(3,3)} + s_{(3,1)} + s_{(2,2)})s_{(1,1),(2)}(\x,\y)\\
&+ (s_{(3,2)} + s_{(3,1,1)})s_{(1,1,1),(1)}(\x,\y)\\
&+ s_{(3,3)}s_{(1,1,1,1),\varnothing}(\x,\y)\\ 
&+ (s_{(3,3)} + s_{(3,1)} + s_{(1,1)})s_{(2),(1,1)}(\x,\y)\\ 
&+ (s_{(3,1)} + s_{(2)})s_{(2),(2)}(\x,\y)\\ 
&+ (s_{(3,2)} + s_{(3)} + s_{(2,1)})s_{(2,1),(1)}(\x,\y)\\
&+ s_{(3,1)}s_{(2,1,1),\varnothing}(\x,\y)\\
&+ s_{(2,2)}s_{(2,2),\varnothing}(\x,\y)\\ 
&+ (s_{(3)} + s_{(1)})s_{(3),(1)}(\x,\y)\\ 
&+ s_{(2)}s_{(3,1),\varnothing}(\x,\y)\\ 
&+ s_{\varnothing}s_{(4),\varnothing}(\x,\y)
\end{aligned}
\end{equation}

\subsection{Dimension of \texorpdfstring{$R_{\mathfrak{B}_n}^{(0,3)}$}{RBn(0,3)}}
From the computed Frobenius series, one can deduce that $\dim R_{\mathfrak{B}_1}^{(0,3)} = 4$, $\dim R_{\mathfrak{B}_2}^{(0,3)} = 21$, $\dim R_{\mathfrak{B}_3}^{(0,3)} = 121$, and $\dim R_{\mathfrak{B}_4}^{(0,3)} = 742$.

\bibliographystyle{plain}
\bibliography{main}

\begin{thebibliography}{10}

\bibitem{MR3629266}
Ron~M. Adin, Christos~A. Athanasiadis, Sergi Elizalde, and Yuval Roichman.
\newblock Character formulas and descents for the hyperoctahedral group.
\newblock {\em Adv. in Appl. Math.}, 87:128--169, 2017.

\bibitem{AjilaGriffeth}
Carlos Ajila and Stephen Griffeth.
\newblock Representation theory and the diagonal coinvariant ring of the type
  {B Weyl} group.
\newblock {\em Journal of Pure and Applied Algebra}, 228(6):107592, June 2024.

\bibitem{Angarone2025}
Robert Angarone, Patricia Commins, Trevor Karn, Satoshi Murai, and Brendon
  Rhoades.
\newblock Superspace coinvariants and hyperplane arrangements.
\newblock {\em Adv. Math.}, 467:36, 2025.

\bibitem{Artin}
Emil Artin.
\newblock {\em Galois theory. 2nd ed. {Edited} and supplemented with a section
  on applications by {Arthur} {N}. {Milgram}}, volume~2 of {\em Notre Dame
  Math. Lect.}
\newblock Univ. of Notre Dame Press, Notre Dame, IN, 1944.

\bibitem{BereleRegev}
A.~Berele and A.~Regev.
\newblock Hook {Young} diagrams with applications to combinatorics and to
  representations of {Lie} superalgebras.
\newblock {\em Adv. Math.}, 64:118--175, 1987.

\bibitem{Bergeron-code}
Fran{\c{c}}ois Bergeron.
\newblock Symmetric functions in {S}age.
\newblock Jupyter notebook.

\bibitem{Bergeron2013}
Fran{\c{c}}ois Bergeron.
\newblock Multivariate diagonal coinvariant spaces for complex reflection
  groups.
\newblock {\em Adv. Math.}, 239:97--108, 2013.

\bibitem{Bergeron2020}
Fran{\c{c}}ois Bergeron.
\newblock The bosonic-fermionic diagonal coinvariant modules conjecture.
\newblock {\em Preprint}, 2020.
\newblock \url{https://arxiv.org/abs/2005.00924}.

\bibitem{BergeronOPAC}
Fran{\c{c}}ois Bergeron.
\newblock $({GL}_k\times {S}_n)$-modules of multivariate diagonal harmonics.
\newblock In Christine Berkesch, Benjamin Brubaker, Gregg Musiker, Pavlo
  Pylyavskyy, and Victor Reiner, editors, {\em Open Problems in Algebraic
  Combinatorics}, volume 110 of {\em Proc. Symp. Pure Math.}, pages 1--22.
  Providence, RI: American Mathematical Society (AMS), 2024.

\bibitem{BergeronGarsia}
N.~Bergeron and A.~M. Garsia.
\newblock On certain spaces of harmonic polynomials.
\newblock In {\em Hypergeometric functions on domains of positivity, Jack
  polynomials, and applications. Proceedings of an AMS special session held
  March 22-23, 1991 in Tampa, FL, USA}, pages 51--86. Providence, RI: American
  Mathematical Society, 1992.

\bibitem{Bessenrodt}
Christine Bessenrodt.
\newblock Tensor products of representations of the symmetric groups and
  related groups.
\newblock {\em RIMS Kokyuroku (Proceedings of Research Institute for
  Mathematical Sciences)}, 1149:1--15, 2000.

\bibitem{bhattacharya}
Sutanay Bhattacharya.
\newblock The superspace coinvariant ring of type {B}.
\newblock {\em Preprint}, 2025.
\newblock \url{https://arxiv.org/abs/2505.24122}.

\bibitem{RhoadesBhattacharya}
Sutanay Bhattacharya and Brendon Rhoades.
\newblock Superspace coinvariants for wreath products.
\newblock {\em Preprint}, 2026.
\newblock \url{https://arxiv.org/abs/2606.30977}.

\bibitem{Borel}
Armand Borel.
\newblock Sur la cohomologie des espaces fibr{\'e}s principaux et des espaces
  homog{\`e}nes de groupes de {Lie} compacts.
\newblock {\em Ann. Math. (2)}, 57:115--207, 1953.

\bibitem{BourbakiAlgebraI}
Nicolas Bourbaki.
\newblock {\em Algebra {I}}.
\newblock Elements of mathematics. Springer-Verlag, Berlin, 1989.

\bibitem{ChengWangBook}
Shun-Jen Cheng and Weiqiang Wang.
\newblock {\em Dualities and representations of {L}ie superalgebras}, volume
  144 of {\em Graduate Studies in Mathematics}.
\newblock American Mathematical Society, Providence, RI, 2012.

\bibitem{Chevalley}
Claude Chevalley.
\newblock Invariants of finite groups generated by reflections.
\newblock {\em American Journal of Mathematics}, 77(4):778--782, 1955.

\bibitem{CurtisReiner}
Charles~W. Curtis and Irving Reiner.
\newblock {\em Representation theory of finite groups and associative
  algebras}.
\newblock AMS Chelsea Publishing, Providence, RI, 2006.
\newblock Reprint of the 1962 original.

\bibitem{GeckPfeiffer}
Meinolf Geck and G{\"o}tz Pfeiffer.
\newblock {\em Characters of finite {Coxeter} groups and {Iwahori}-{Hecke}
  algebras}, volume~21 of {\em Lond. Math. Soc. Monogr., New Ser.}
\newblock Oxford: Clarendon Press, 2000.

\bibitem{Gordon}
Iain Gordon.
\newblock On the quotient ring by diagonal invariants.
\newblock {\em Invent. Math.}, 153(3):503--518, 2003.

\bibitem{MR3839282}
Caroline Gruson and Vera Serganova.
\newblock {\em A journey through representation theory}.
\newblock Universitext. Springer, Cham, 2018.
\newblock From finite groups to quivers via algebras.

\bibitem{HaglundSergel}
James Haglund and Emily Sergel.
\newblock Schedules and the {Delta} conjecture.
\newblock {\em Annals of Combinatorics}, 25(1):1–31, November 2020.

\bibitem{Haiman1994}
Mark Haiman.
\newblock Conjectures on the quotient ring by diagonal invariants.
\newblock {\em J. Algebr. Comb.}, 3(1):17--76, 1994.

\bibitem{Haiman2002}
Mark Haiman.
\newblock Vanishing theorems and character formulas for the {Hilbert} scheme of
  points in the plane.
\newblock {\em Invent. Math.}, 149(2):371--407, 2002.

\bibitem{Howe}
Roger Howe.
\newblock Perspectives on invariant theory: {Schur} duality, multiplicity-free
  actions and beyond.
\newblock In {\em The Schur lectures (1992)}, pages 1--182. Ramat-Gan: Bar-Ilan
  University; Providence, RI: American Mathematical Society (Distrib.), 1995.

\bibitem{MR4381935}
Jongwon Kim and Brendon Rhoades.
\newblock Lefschetz theory for exterior algebras and fermionic diagonal
  coinvariants.
\newblock {\em Int. Math. Res. Not. IMRN}, 2022(4):2906--2933, 2022.

\bibitem{Lentfer-12}
John Lentfer.
\newblock A conjectural basis for the $(1,2)$-bosonic-fermionic coinvariant
  ring.
\newblock {\em Algebr. Comb.}, 8(3):711--743, 2025.

\bibitem{Lentfer-Supersymmetry}
John Lentfer.
\newblock Diagonal supersymmetry for coinvariant rings.
\newblock {\em Preprint}, 2025.
\newblock \url{https://arxiv.org/abs/2505.14885}.

\bibitem{Lentfer-dissertation}
John Lentfer.
\newblock {\em Combinatorics of Bosonic-Fermionic Coinvariant Rings}.
\newblock PhD thesis, University of California, Berkeley, 2026.

\bibitem{lentfer2026signcharactertriagonalfermionic}
John Lentfer.
\newblock The sign character of the triagonal fermionic coinvariant ring.
\newblock {\em Electron. J. Comb.}, 33(2):Paper P2.3, 22, 2026.

\bibitem{Leray}
Jean Leray.
\newblock Sur l'homologie des groupes de {Lie}, des espaces homog{\`e}nes et
  des espaces fibr{\'e}s principaux.
\newblock {\em Centre {Belge} {Rech}. {Math}., {Colloque} {Topologie},
  {Bruxelles}}, pages 101--115, 1951.

\bibitem{Macdonald}
Ian~Grant Macdonald.
\newblock {\em Symmetric functions and {Hall} polynomials.}
\newblock Oxford: Clarendon Press, 2nd ed. edition, 1995.

\bibitem{MR974302}
George~W. Mackey.
\newblock Induced representations and the applications of harmonic analysis.
\newblock In {\em Harmonic analysis ({L}uxembourg, 1987)}, volume 1359 of {\em
  Lecture Notes in Math.}, pages 16--51. Springer, Berlin, 1988.

\bibitem{fields2025}
Satoshi Murai, Brendon Rhoades, and Andy Wilson.
\newblock A proof of the {F}ields conjectures.
\newblock {\em Preprint}, 2025.
\newblock \url{https://arxiv.org/abs/2505.24027}.

\bibitem{OrellanaZabrocki}
Rosa Orellana and Michael Zabrocki.
\newblock A combinatorial model for the decomposition of multivariate
  polynomial rings as {{\(S_n\)}}-modules.
\newblock {\em Electron. J. Comb.}, 27(3):18, 2020.
\newblock Id/No p3.24.

\bibitem{Orellana}
Rosa~C. Orellana.
\newblock On the algebraic decomposition of a centralizer algebra of the
  hyperoctahedral group.
\newblock In {\em Algebraic structures and their representations. Proceedings
  of `XV coloquio Latinoamericano de \'algebra', Cocoyoc, Morelos, M\'exico,
  July 20--26, 2003.}, pages 345--357. Providence, RI: American Mathematical
  Society (AMS), 2005.

\bibitem{Pearson}
K.~Pearson.
\newblock On the theory of contingency and its relation to association and
  normal correlation.
\newblock London: {Draper}'s research memoirs. {Biometric} series. {Nrs}. 1-2,
  1904/5 (1904,1905)., 1904.

\bibitem{RhoadesWilson2023}
Brendon Rhoades and Andrew~Timothy Wilson.
\newblock The {Hilbert} series of the superspace coinvariant ring.
\newblock {\em Forum Math. Pi}, 12:35, 2024.
\newblock Id/No e16.

\bibitem{MR4674564}
Bruce~E. Sagan and Joshua~P. Swanson.
\newblock {$q$}-{S}tirling numbers in type {$B$}.
\newblock {\em European J. Combin.}, 118:Paper No. 103899, 35, 2024.

\bibitem{MR59914}
G.~C. Shephard and J.~A. Todd.
\newblock Finite unitary reflection groups.
\newblock {\em Canad. J. Math.}, 6:274--304, 1954.

\bibitem{MR526968}
Richard~P. Stanley.
\newblock Invariants of finite groups and their applications to combinatorics.
\newblock {\em Bull. Amer. Math. Soc. (N.S.)}, 1(3):475--511, 1979.

\bibitem{StanleyEC2}
Richard~P. Stanley.
\newblock {\em Enumerative combinatorics. {Volume} 2}, volume~62 of {\em Camb.
  Stud. Adv. Math.}
\newblock Cambridge: Cambridge University Press, 1999.

\bibitem{Stembridge}
John~R. Stembridge.
\newblock On the eigenvalues of representations of reflection groups and wreath
  products.
\newblock {\em Pacific J. Math.}, 140(2):353--396, 1989.

\bibitem{Stump}
Christian Stump.
\newblock $q,t$-{Fu\ss-Catalan} numbers for finite reflection groups.
\newblock {\em Journal of algebraic combinatorics}, 32(1):67--97, 2010.

\bibitem{SwansonPreprint}
Joshua~P. Swanson.
\newblock Alternating super-polynomials and super-coinvariants of finite
  reflection groups.
\newblock {\em preprint}, page \url{https://arxiv.org/pdf/1908.00196}, 2019.

\bibitem{SwansonWallach1}
Joshua~P. Swanson and Nolan~R. Wallach.
\newblock {Harmonic differential forms for pseudo-reflection groups I.
  Semi-invariants}.
\newblock {\em Journal of Combinatorial Theory, Series A}, 182:Paper no.
  105474, August 2021.

\bibitem{SwansonWallachII}
Joshua~P. Swanson and Nolan~R. Wallach.
\newblock Harmonic differential forms for pseudo-reflection groups. {II}:
  {Bi}-degree bounds.
\newblock {\em Comb. Theory}, 3(3):43, 2023.
\newblock Id/No 17.

\bibitem{Weyl}
Hermann Weyl.
\newblock {\em The classical groups, their invariants and representations},
  volume~1 of {\em Princeton Math. Ser.}
\newblock Princeton, NJ: Princeton University Press, 2nd edition, 1946.

\bibitem{Zabrocki-code}
Mike Zabrocki.
\newblock Jupyter notebook.
\newblock \url{http://garsia.math.yorku.ca/~zabrocki/delta_conj.ipynb}.

\bibitem{Zabrocki2019}
Mike Zabrocki.
\newblock A module for the {D}elta conjecture, 2019.
\newblock \url{https://arxiv.org/abs/1902.08966}.

\bibitem{Zabrocki2020}
Mike Zabrocki.
\newblock Coinvariants and harmonics, 2020.
\newblock
  \url{https://realopacblog.wordpress.com/2020/01/26/coinvariants-and-harmonics/}.

\end{thebibliography}
\end{document}